\documentclass[a4paper,11pt]{article}

\usepackage{mathrsfs}
\usepackage{amsmath}
\usepackage{bm}
\usepackage{amssymb}
\usepackage{makeidx}
\makeindex
\usepackage[dvipsnames]{xcolor}
\usepackage[colorlinks=true, linkcolor=blue, citecolor=blue, urlcolor=blue]{hyperref}
\usepackage{titlesec}
\usepackage{amsthm}
\usepackage{setspace}
\usepackage{pifont}
\usepackage{esint}
\usepackage{graphicx}
\usepackage{float}
\usepackage{enumerate}
\usepackage{array}
\numberwithin{equation}{section}
\usepackage[backend=biber,style=alphabetic,sorting=nyt]{biblatex}
\DeclareFieldFormat[article]{title}{#1}
\renewbibmacro{in:}{}
\DeclareFieldFormat{pages}{#1}
\DeclareFieldFormat[article]
{journaltitle}{\mkbibemph{#1}\addcomma}
\renewbibmacro*{volume+number+eid}{%
	\printfield{volume}%
	\setunit{\addspace}%
	\printfield{eid}%
}
\renewbibmacro*{eprint}{%
	\iffieldundef{eprint}
	{}
	{%
		\setunit{\addcomma\space}
		\printtext{%
			\printfield[eprint:arxiv]{eprint}}%
	}%
}

\titleformat{\section}[block]{\bfseries\Large}{\thesection}{1em}{}

\newtheoremstyle{mystyle}
{}{}{\upshape}{}{\bfseries}{.}{.5em}{}

\theoremstyle{mystyle}

\newtheorem{theorem}{Theorem}[section]
\newtheorem{proposition}[theorem]{Proposition}
\newtheorem{lemma}[theorem]{Lemma}
\newtheorem{coro}[theorem]{Corollary}
\newtheorem{definition}[theorem]{Definition}

\newtheorem{remark}[theorem]{Remark}

\newcommand{\R}{\mathbb{R}}
\newcommand{\Z}{\mathbb{Z}}
\newcommand{\N}{\mathbb{N}}

\newcommand{\cP}{\mathcal{P}}

\newcommand{\cF}{\mathcal{F}}
\newcommand{\cS}{\mathcal{S}}

\begin{document} 
	
	\title{\large
The Fractional-Logarithmic Laplacian:\\
Potentials, Regularity, and Critical Compact Embeddings}
	\author{\small Rui Chen}
	\date{}
	\maketitle
	\thispagestyle{empty}
	\pagenumbering{arabic}
	
\noindent\textbf{Abstract:} We develop potential-theoretic and \(L^p\)-regularity results for the
fractional--logarithmic Laplacian \((-\Delta)^{s+\ln}\) and its inhomogeneous
counterpart \((\lambda I-\Delta)^{s+\ln}\), \(\lambda>1\). These operators lead
to logarithmic analogues of the classical Riesz and Bessel potentials. For the
associated logarithmic Bessel kernel \(K_{s+\ln}^{\lambda}\), we obtain
representation formulas and sharp pointwise asymptotics at both the origin and
infinity, including explicit leading constants.

A key ingredient is a measure-level bridge between the homogeneous and
inhomogeneous symbols. This allows us to pass between the equations $(\lambda I-\Delta)^{s+\ln}u=f$ and $(-\Delta)^{s+\ln}u=f,$
and yields global \(L^p\) estimates, well-posedness for distributional
solutions, and a natural scale of logarithmic Bessel spaces
\(\mathcal L^p_{s+\ln,\lambda}\). We also discuss the dependence of these spaces
on \(\lambda\), their relation to the classical Bessel spaces and with the logarithmic Bessel potential spaces in \cite{opic2000bessel}.

As applications, we prove endpoint embeddings and critical compactness results.
On the critical line \(n=2sp\), we obtain embeddings with a logarithmic modulus
of continuity, local compactness on bounded domains, and global compactness in
the radial class. In the subcritical case \(n>2sp\), we prove compactness at the
critical Sobolev exponent $p^*=\frac{np}{n-2sp},$
recovering compactness at the borderline Lebesgue threshold, a phenomenon absent
from the classical Sobolev and Bessel scales.

		\medskip
        
\noindent \textbf{Keywords:} Fractional--logarithmic Laplacian; Nonlocal operators; Regularity theory; Logarithmic Bessel potentials; Critical embeddings; Compactness
	
	\medskip
	\noindent {\small\bf MSC Subject Classifications:} 35R11; 35S05; 46E35; 42B20; 35B65

	\tableofcontents
	\thispagestyle{empty}

	\setcounter{page}{1}
	
\section{Introduction and Main Results}

In this paper we develop a systematic potential-theoretic and $L^p$ regularity theory for two closely related pseudo-differential operators: the fractional--logarithmic Laplacian
\[
(-\Delta)^{s+\ln},
\qquad
\text{with Fourier symbol }(4\pi^2|\xi|^2)^s\ln(4\pi^2|\xi|^2),
\]
and its inhomogeneous counterpart
\[
(\lambda I-\Delta)^{s+\ln},
\qquad
\text{with Fourier symbol }(\lambda+4\pi^2|\xi|^2)^s\ln(\lambda+4\pi^2|\xi|^2),
\quad \lambda>1.
\]
Our point of view is that these two operators should be treated simultaneously as the homogeneous and inhomogeneous models of a single logarithmic smoothness scale. Their inverses then give rise to logarithmic analogues of the classical Riesz and Bessel potentials, and this provides the natural functional framework for the analysis developed below.

The operator $(-\Delta)^{s+\ln}$ has only very recently begun to be studied as a genuine nonlocal operator in its own right. In the range $s\in(0,1)$, the recent work~\cite{chen2026fractional} establishes several equivalent realizations of $(-\Delta)^{s+\ln}$, develops the associated energy spaces on $\mathbb R^n$ and on bounded domains, and investigates qualitative spectral properties, including a Dirichlet eigenvalue problem and a Weyl-type asymptotic law with a logarithmic correction. More broadly, $(-\Delta)^{s+\ln}$ can be viewed as a new nonlocal operator interpolating between fractional and logarithmic regimes, and it is natural to expect that it will play an increasingly important role in analysis and PDE.

This expectation is motivated by the remarkable success of related nonlocal operators. For the fractional Laplacian $(-\Delta)^s$, there is by now a vast literature on regularity, eigenvalue problems, semilinear and critical equations, free boundary problems, conformal geometry, and nonlocal minimal surfaces; see, for example, \cite{RosOtonSerra2014,caffarelli2008regularity,silvestre2007regularity,RosOtonWeidner2026,caffarelli2007extension,RosOtonSerra2015,CaffarelliRosOtonSerra2017,chang2011fractional,caffarelli2010nonlocal,chen2026poincar} and the references therein. For the logarithmic Laplacian, following the pointwise representation obtained in \cite{chen2019dirichlet}, an active theory has developed around boundary value problems, qualitative properties of solutions, and related nonlocal phenomena; see, for instance, \cite{chen2019dirichlet,jarohs2020poisson,chenveron2024cauchy,frank2020classification,ch2025logarithmic,chen2025logarithmic,gerontogiannis2025ahlfors}.

On the inhomogeneous side, the logarithmic Bessel potential
\[
(\lambda+4\pi^2|\xi|^2)^{-s}\ln^{-1}(\lambda+4\pi^2|\xi|^2),
\qquad \lambda>1,\ s>0,
\]
has a longer history and appears naturally in the theory of spaces of logarithmic smoothness. Two complementary viewpoints have been especially influential.

\smallskip
\noindent
\emph{(i) Logarithmic refinement of the base integrability scale.}
One keeps the classical Bessel potential operator but replaces Lebesgue spaces by finer Lorentz--Zygmund type spaces; see, e.g., \cite{1996sharpness,1997embeddings,1998norms,edmunds2000optimality,cobos2015besov}.

\smallskip
\noindent
\emph{(ii) Logarithmic refinement of the smoothness scale.}
One inserts slowly varying logarithmic factors into the Bessel multiplier, thereby obtaining spaces of logarithmic smoothness that sharpen critical Sobolev and H\"older thresholds; see, e.g., \cite{opic2000bessel,dominguez2023function}.

\smallskip
Our approach is closer to~(ii), but differs from much of the earlier literature in two essential respects. First, we treat the logarithmic Bessel potential and the fractional--logarithmic Laplacian as the inhomogeneous and homogeneous members of a single pseudo-differential framework, rather than as isolated logarithmic refinements of known embedding scales. Second, and more importantly, we identify an explicit structural bridge between the inhomogeneous and homogeneous symbols. This bridge is one of the main analytic mechanisms of the paper and allows us to transfer information between logarithmic Bessel and logarithmic Riesz potentials.

From the potential-theoretic viewpoint, the operators
\[
(-\Delta)^{-(s+\ln)}
\qquad\text{and}\qquad
(\lambda I-\Delta)^{-(s+\ln)},\quad \lambda>1,
\]
should therefore be regarded as the logarithmic counterparts of the classical Riesz and Bessel potentials. Indeed, whereas the standard Riesz and Bessel potentials correspond to the multipliers
\[
(4\pi^2|\xi|^2)^{-s}
\qquad\text{and}\qquad
(\lambda+4\pi^2|\xi|^2)^{-s},
\]
their logarithmic analogues are obtained by inserting the slowly varying factor $\ln(\cdot)$ into the same functional calculus, leading to the multipliers
\[
(4\pi^2|\xi|^2)^{-s}\bigl[\ln(4\pi^2|\xi|^2)\bigr]^{-1}
\qquad\text{and}\qquad
(\lambda+4\pi^2|\xi|^2)^{-s}\bigl[\ln(\lambda+4\pi^2|\xi|^2)\bigr]^{-1}.
\]

A central theme of the paper is that this logarithmic correction has a decisive effect at critical thresholds. Roughly speaking, it weakens the singularity of the classical kernel just enough to restore strong endpoint integrability. This gain places the analysis in a robust Young-type convolution framework and ultimately yields compactness phenomena at critical exponents that are absent in the classical Sobolev and Bessel scales. One of the main consequences is the recovery of compactness at the critical Sobolev exponent
\[
p^*=\frac{np}{n-2sp}
\]
in the subcritical regime $n>2sp$, both locally on bounded domains and globally under radial symmetry.

\medskip

To investigate the inhomogeneous logarithmic potential, we introduce the associated \emph{logarithmic Bessel kernel} $K_{s+\ln}^{\lambda}(x)$ (see Definition~\ref{kernel1}). Our first objective is to determine its sharp asymptotic behaviour both as $|x|\to0$ and as $|x|\to\infty$. As a benchmark, recall that the classical Bessel kernel $G_s$, namely the convolution kernel of $(I-\Delta)^{-s}$, has the well-known near-field trichotomy and an exponentially decaying far-field profile; see Section~\ref{bessel poten} for background. More precisely, for every $s>0$,
\begin{equation}\label{eq:Gs-infty-asympt}
\lim_{|x|\to\infty} |x|^{\frac{n+1}{2}-s}\,e^{|x|}\,G_s(x)
=\frac{1}{2^{\frac{n-1}{2}+s}\,\pi^{\frac{n-1}{2}}\,\Gamma(s)},
\end{equation}
see \cite[Lemma~4.15]{abatangelo2025gentle}, whereas near the origin one has, for all
$x\in B_5\setminus\{0\}$,
\begin{equation}\label{eq:Gs-zero-trichotomy}
|G_s(x)|
\le
\begin{cases}
\dfrac{C}{|x|^{\,n-2s}}, & n>2s,\\[2mm]
C\bigl(1+|\ln|x||\bigr), & n=2s,\\[1mm]
C, & n<2s,
\end{cases}
\end{equation}
for some constant $C=C(n,s)>0$; see \cite[Lemma~4.17]{abatangelo2025gentle}. In the classical subcritical case $n>2s$, this singularity is exactly too strong to yield strong endpoint integrability at $L^{\frac{n}{n-2s}}$, which is one of the basic analytic obstructions behind critical Sobolev embeddings.

The logarithmic kernel $K_{s+\ln}^{\lambda}$ turns out to be qualitatively different. Near the origin, the logarithmic correction weakens the leading singularity by an additional slowly varying factor; at infinity, it preserves an exponential profile with an exact leading term. This combination is precisely what drives the endpoint estimates in the later sections.

To obtain a workable representation of $K_{s+\ln}^{\lambda}$, we introduce a family of shifted Bessel kernels. For $\alpha>0$ and $\lambda>0$, let $G^\lambda_{\alpha}$ denote the tempered distribution whose Fourier transform is
\begin{equation}\label{eq:def-shifted-bessel}
\widehat{G^\lambda_{\alpha}}(\xi)
=(\lambda+4\pi^2|\xi|^2)^{-\alpha},
\qquad \xi\in\mathbb R^n.
\end{equation}
Arguing exactly as in the classical case (see \cite[Chapter~V]{stein1970singular}) and using $\widehat{p_t}(\xi)=e^{-4\pi^2 t|\xi|^2}$, one obtains the heat kernel representation
\begin{equation}\label{eq:G-lambda-heat}
G^\lambda_{\alpha}(x)
=\frac{1}{\Gamma(\alpha)}\int_{0}^{\infty} e^{-\lambda t}\,t^{\alpha-1}\,p_t(x)\,dt,
\quad x\in\mathbb R^n,
\end{equation}
where
\begin{equation}\label{gaussian}
p_t(x)
=(4\pi t)^{-n/2}e^{-\frac{|x|^2}{4t}},
\qquad t>0,\ x\in\mathbb R^n.
\end{equation}
This representation shows that $G^\lambda_\alpha$ is a Gamma-type mixture of Gaussians and leads to a useful Laplace-type asymptotic as $\alpha\to\infty$.

\begin{proposition}\label{prop:Galp-laplace-asymp}
Let $\lambda>0$. For every fixed $x\in \mathbb{R}^n\setminus \left\{0\right\}$,
\[
G^\lambda_\alpha(x)
=\lambda^{-\alpha}\,p_{\alpha/\lambda}(x)\,\Bigl(1+O(\alpha^{-1})\Bigr),
\qquad \alpha\to\infty,
\]
where $p_t$ is the Gaussian kernel given in \eqref{gaussian}. Equivalently,
\[
G^\lambda_\alpha(x)
=\lambda^{-\alpha}\,(4\pi\alpha/\lambda)^{-n/2}
\exp\!\Bigl(-\frac{\lambda|x|^2}{4\alpha}\Bigr)
\Bigl(1+O(\alpha^{-1})\Bigr),
\qquad \alpha\to\infty.
\]
Moreover, the $O(\alpha^{-1})$ error is uniform for $x$ in compact subsets of
$\mathbb R^n$.
\end{proposition}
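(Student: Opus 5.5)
The plan is to avoid a saddle-point expansion altogether and read \eqref{eq:G-lambda-heat} probabilistically. Let $T$ be a Gamma random variable with shape $\alpha$ and rate $\lambda$, so its density is $\lambda^{\alpha}t^{\alpha-1}e^{-\lambda t}/\Gamma(\alpha)$. Then \eqref{eq:G-lambda-heat} reads
\[
G^\lambda_\alpha(x)=\lambda^{-\alpha}\,\mathbb E\bigl[p_T(x)\bigr].
\]
Write $T=(\alpha/\lambda)U$, where $U$ is Gamma with shape $\alpha$ and rate $\alpha$. Thus $\mathbb E U=1$ and $\operatorname{Var}U=1/\alpha$. Dividing by $\lambda^{-\alpha}p_{\alpha/\lambda}(x)$ and using \eqref{gaussian} gives the exact identity
\[
\frac{G^\lambda_\alpha(x)}{\lambda^{-\alpha}p_{\alpha/\lambda}(x)}
=\mathbb E\Bigl[U^{-n/2}\exp\bigl(-\varepsilon\,(U^{-1}-1)\bigr)\Bigr],
\qquad \varepsilon:=\frac{\lambda|x|^2}{4\alpha}.
\]
So the statement reduces to showing that the right-hand side is $1+O(\alpha^{-1})$, uniformly for $|x|\le R$. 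Note that $\varepsilon\le \lambda R^2/(4\alpha)=O(\alpha^{-1})$ on such a set.

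\emph{Step 1: the main term.} I will compute $\mathbb E[U^{-n/2}]$ exactly. For $\alpha>n/2$ one has
\[
\mathbb E[U^{-n/2}]=\frac{\alpha^{n/2}\,\Gamma(\alpha-n/2)}{\Gamma(\alpha)}.
\]
The standard gamma-ratio asymptotic $\Gamma(\alpha-a)/\Gamma(\alpha)=\alpha^{-a}\bigl(1+O(\alpha^{-1})\bigr)$, which follows from Stirling, then gives $\mathbb E[U^{-n/2}]=1+O(\alpha^{-1})$.

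\emph{Step 2: the exponential correction.} It remains to bound
\[
\mathbb E\Bigl[U^{-n/2}\bigl|e^{-\varepsilon(U^{-1}-1)}-1\bigr|\Bigr].
\]
I split according to the size of $U$.
\begin{itemize}
\item On $\{U\ge1\}$ we have $U^{-1}-1\in(-1,0]$, so the bracket is at most $e^{\varepsilon}-1=O(\varepsilon)$. Also $U^{-n/2}\le1$ there, so this part is $O(\varepsilon)$.
\item On $\{U<1\}$ the exponent is negative, and $|e^{-y}-1|\le y$ for $y\ge0$. The contribution is therefore at most $\varepsilon\,\mathbb E[U^{-n/2-1}]$. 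By the same gamma-moment formula, for $\alpha>n/2+1$ this is $\varepsilon\,\alpha^{n/2+1}\Gamma(\alpha-n/2-1)/\Gamma(\alpha)=O(\varepsilon)$.
\end{itemize}
Combining the two cases, the correction is $O(\varepsilon)=O(R^2/\alpha)$. Together with Step 1 this gives the first asymptotic in the statement. The constants are uniform for $|x|\le R$, which gives the claimed uniformity on compact sets.

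\emph{Step 3: the equivalent form.} Substituting $t=\alpha/\lambda$ into \eqref{gaussian} yields the second displayed formula directly.

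I expect the main obstacle to be the region where $U$ is small, i.e.\ where $T$ is small. There the heat-kernel factor $t^{-n/2}$ is singular, and a naive Laplace expansion around $t=\alpha/\lambda$ would need separate tail estimates. The probabilistic rewriting handles this by reducing everything to negative moments of a Gamma variable, which are explicit and finite once $\alpha>n/2+1$. The only remaining analytic input is the gamma-ratio asymptotic from Step 1.
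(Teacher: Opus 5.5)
Your proof is correct. It shares the paper's starting point, the Gamma-mixture identity $G^\lambda_\alpha(x)=\lambda^{-\alpha}\mathbb E[p_{T_\alpha}(x)]$, but the key step differs.

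\textbf{The paper's route.} It Taylor-expands $t\mapsto p_t(x)$ to third order about the mean $\mu_\alpha=\alpha/\lambda$, and uses $\mathbb E[T_\alpha-\mu_\alpha]=0$ and the variance $\alpha/\lambda^2$. On $\{T_\alpha\ge\mu_\alpha/2\}$ it controls the time-derivatives of the heat kernel through $\partial_t^k p_t=p_t\,t^{-k}Q_k(|x|^2/t)$. It handles $\{T_\alpha<\mu_\alpha/2\}$ with the Chernoff bound $\mathbb P(T_\alpha<\mu_\alpha/2)\le (e^{1/2}/2)^{\alpha}$ together with $\sup_{t>0}p_t(x)<\infty$.

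\textbf{Your route.} You divide by $p_{\alpha/\lambda}(x)$ exactly and use the special form of the Gaussian in $t$, namely a power times $e^{-c/t}$. After rescaling to $U\sim\mathrm{Gamma}(\alpha,\alpha)$, the ratio becomes $\mathbb E\bigl[U^{-n/2}e^{-\varepsilon(U^{-1}-1)}\bigr]$. Its main part is an exact negative Gamma moment. Its correction is $O(\varepsilon)$ by elementary inequalities and the finiteness of $\mathbb E[U^{-n/2-1}]$.

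\textbf{What each buys.} Your argument needs no derivative bookkeeping and no tail event, and it gives explicit constants. It also yields uniformity on every ball $\{|x|\le R\}$, including the origin, because $\varepsilon\le\lambda R^2/(4\alpha)$. In the paper, the lower-tail constant is $\sup_t p_t(x)\asymp|x|^{-n}$, which degenerates as $x\to0$, so its uniformity claim needs an extra argument near $x=0$. The paper's delta-method template, on the other hand, does not rely on the algebraic form of $p_t$. It would apply to any Gamma mixture $\mathbb E[F(T_\alpha)]$ with $F$ having suitably controlled derivatives near the mean.
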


As an immediate consequence, for every $s>0$ and $\lambda>1$ the improper integral
$\int_s^\infty G^\lambda_{\alpha}(x)\,d\alpha$ converges for each fixed $x\in\mathbb R^n$. Since $G^\lambda_\alpha\ge0$ and $\|G^\lambda_\alpha\|_{L^1(\mathbb R^n)}=\lambda^{-\alpha}$, Tonelli's theorem yields
\[
\int_{\mathbb R^n}\int_s^\infty G^\lambda_\alpha(x)\,d\alpha\,dx
=\int_s^\infty \lambda^{-\alpha}\,d\alpha
=\frac{\lambda^{-s}}{\ln\lambda}<\infty.
\]
Hence $x\mapsto \int_s^\infty G^\lambda_{\alpha}(x)\,d\alpha$ defines a nonnegative
$L^1(\mathbb R^n)$-function. The next result identifies it with the logarithmic Bessel kernel.

\begin{proposition}\label{lem:log-bessel-kernel}
Let $s>0$ and $\lambda>1$. Then, for every $x\in \mathbb{R}^n\setminus \left\{0\right\}$,
\begin{equation}\label{eq:K-log-alpha}
K_{s+\ln}^{\lambda}(x)=\int_s^\infty G^\lambda_{\alpha}(x)\,d\alpha\in L^1(\mathbb{R}^n).
\end{equation}
Equivalently,
\begin{equation}\label{eq:K-log-heat}
K_{s+\ln}^{\lambda}(x)
=\int_s^\infty \frac{1}{\Gamma(\alpha)}
\int_0^\infty e^{-\lambda t}\,t^{\alpha-1}\,p_t(x)\,dt\,d\alpha,
\end{equation}
where $p_t$ is the Gaussian kernel from \eqref{gaussian}.
\end{proposition}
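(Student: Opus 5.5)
We identify the two sides through their Fourier transforms. By Definition~\ref{kernel1}, $K_{s+\ln}^{\lambda}$ is the tempered distribution whose Fourier transform is
\[
m(\xi)=(\lambda+4\pi^2|\xi|^2)^{-s}\ln^{-1}(\lambda+4\pi^2|\xi|^2).
\]
Since $\lambda>1$, this multiplier is a bounded continuous function. Set
\[
F(x):=\int_s^\infty G^\lambda_\alpha(x)\,d\alpha .
\]
As noted just before the statement, Proposition~\ref{prop:Galp-laplace-asymp} gives pointwise convergence of this integral, and Tonelli's theorem gives $F\ge0$ with $\|F\|_{L^1}=\lambda^{-s}/\ln\lambda$. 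So $F\in L^1(\mathbb R^n)$, and its Fourier transform is a genuine bounded continuous function. It therefore suffices to show $\widehat F=m$ pointwise. This forces $K_{s+\ln}^\lambda=F$ as tempered distributions, hence as $L^1$ functions. The equality at every $x\neq0$ then reads as the statement that the $L^1$ representative of $K^\lambda_{s+\ln}$ is the function defined pointwise by \eqref{eq:K-log-alpha}.

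To compute $\widehat F(\xi)$, write
\[
\widehat F(\xi)=\int_{\mathbb R^n}\int_s^\infty G^\lambda_\alpha(x)e^{-2\pi i x\cdot\xi}\,d\alpha\,dx .
\]
The absolute integrand is $G^\lambda_\alpha(x)\ge0$, and its double integral equals $\lambda^{-s}/\ln\lambda<\infty$. Fubini's theorem therefore lets us swap the integrals:
\[
\widehat F(\xi)=\int_s^\infty \widehat{G^\lambda_\alpha}(\xi)\,d\alpha=\int_s^\infty (\lambda+4\pi^2|\xi|^2)^{-\alpha}\,d\alpha .
\]
Here we use that each $G^\lambda_\alpha\in L^1$, which follows from \eqref{eq:G-lambda-heat}, so its distributional Fourier transform \eqref{eq:def-shifted-bessel} is the classical one. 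Writing $A=\lambda+4\pi^2|\xi|^2>1$, the elementary integral is
\[
\int_s^\infty A^{-\alpha}\,d\alpha=\frac{A^{-s}}{\ln A}=m(\xi).
\]
This proves \eqref{eq:K-log-alpha}.

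Formula \eqref{eq:K-log-heat} follows by substituting the heat-kernel representation \eqref{eq:G-lambda-heat} into \eqref{eq:K-log-alpha}, with each $G^\lambda_\alpha(x)$ finite for $x\neq0$.

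The main point needing care is the justification that each $G^\lambda_\alpha$ is a nonnegative $L^1$ function with $\|G^\lambda_\alpha\|_1=\lambda^{-\alpha}$ and the stated Fourier transform. I would check this directly from \eqref{eq:G-lambda-heat}. Tonelli's theorem with $\int p_t=1$ gives
\[
\|G^\lambda_\alpha\|_1=\frac{1}{\Gamma(\alpha)}\int_0^\infty e^{-\lambda t}t^{\alpha-1}\,dt=\lambda^{-\alpha}.
\]
Fubini's theorem with $\widehat{p_t}(\xi)=e^{-4\pi^2t|\xi|^2}$ gives $\widehat{G^\lambda_\alpha}(\xi)=(\lambda+4\pi^2|\xi|^2)^{-\alpha}$ by the Gamma integral. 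After that, everything is absolute convergence, and no delicate limiting argument is needed.
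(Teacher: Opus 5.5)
Your argument is correct and is essentially the paper's proof. Both identify $K^{\lambda}_{s+\ln}$ with $\int_s^\infty G^\lambda_\alpha\,d\alpha$ on the Fourier side, using $\int_s^\infty A^{-\alpha}\,d\alpha=A^{-s}/\ln A$ together with Tonelli/Fubini and $\|G^\lambda_\alpha\|_{L^1}=\lambda^{-\alpha}$. The only difference is where the integrals are interchanged: you do it in physical space to get the classical Fourier transform of the $L^1$ function $F$ and then use injectivity of $\mathcal F$ on $\mathcal S'$, while the paper does it on the frequency side against a test function and passes through a weak (Pettis) integral in $\mathcal S'$, so your version is slightly more direct.
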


Our first main result determines the sharp local asymptotics of $K_{s+\ln}^{\lambda}$ at the origin. In contrast with the classical Bessel kernel, whose singularity is of pure power type (or logarithmic in the borderline case), the logarithmic perturbation produces a genuinely new regime: the leading singularity is weakened by an additional logarithmic factor. This is precisely the mechanism behind the endpoint mapping properties used later in the paper.

\begin{proposition}\label{prop:kernel_asymptotics}
Let $n\ge1$, $\lambda>1$, and $s>0$. The asymptotic behavior of the logarithmic
Bessel kernel $K_{s+\ln}^{\lambda}(x)$ at the origin falls into three regimes.

\medskip
\noindent\textnormal{(i)} \textbf{Singular regime ($n>2s$).}
The kernel has a logarithmically moderated Riesz-type singularity:
\[
\lim_{|x|\to0}
K_{s+\ln}^{\lambda}(x)\,
\frac{\ln\!\bigl(\frac{1}{|x|^2}\bigr)}{|x|^{2s-n}}
=\frac{\Gamma(\frac n2-s)}{\pi^{n/2}\,2^{2s}\,\Gamma(s)}.
\]
In particular, the limit constant coincides with the normalization constant of the
classical Riesz potential kernel of order $2s$.

\medskip
\noindent\textnormal{(ii)} \textbf{Critical regime ($n=2s$).}
The kernel exhibits a double-logarithmic blow-up:
\[
\lim_{|x|\to0}
K_{\frac n2+\ln}^{\lambda}(x)\,
\frac{1}{\ln\!\Bigl(\ln\!\bigl(\tfrac{1}{|x|}\bigr)\Bigr)}
=\frac{1}{(4\pi)^{\frac n2}\,\Gamma(\frac n2)}.
\]

\medskip
\noindent\textnormal{(iii)} \textbf{Continuous regime ($n<2s$).}
The kernel extends continuously to the origin and remains bounded:
\[
\lim_{|x|\to0} K_{s+\ln}^{\lambda}(x)
=\frac{1}{(4\pi)^{\frac n2}}
\int_0^\infty
\frac{\Gamma\!\bigl(s+p-\frac n2\bigr)}{\Gamma(s+p)}\,
\lambda^{\frac n2-s-p}\,dp
<\infty.
\]
\end{proposition}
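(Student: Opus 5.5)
Our starting point is the representation of Proposition~\ref{lem:log-bessel-kernel}. We swap the order of integration (Tonelli; everything is nonnegative) and integrate in $\alpha$ first. The heat-kernel formula becomes
\[
K_{s+\ln}^{\lambda}(x)=\int_0^\infty e^{-\lambda t}\,p_t(x)\,\Phi_s(t)\,\frac{dt}{t},
\qquad
\Phi_s(t):=\int_s^\infty \frac{t^{\alpha}}{\Gamma(\alpha)}\,d\alpha .
\]
Alternatively, we write $\alpha=s+p$ and keep the $p$-integral outside:
\[
K_{s+\ln}^{\lambda}(x)=\int_0^\infty G^\lambda_{s+p}(x)\,dp .
\]
We use this second form. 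For fixed $p$, the near-origin behaviour of $G^\lambda_{s+p}$ is classical. Substituting $t=|x|^2 u$ gives
\[
G^\lambda_{s+p}(x)=\frac{|x|^{2(s+p)-n}}{(4\pi)^{n/2}\Gamma(s+p)}\int_0^\infty e^{-\lambda |x|^2 u}\,u^{s+p-\frac n2-1}e^{-1/(4u)}\,du .
\]
When $s+p<n/2$, this integral tends to $\Gamma(\frac n2-s-p)\,4^{\,s+p-n/2}$ as $x\to 0$. When $s+p>n/2$, we substitute back and get the finite value $\Gamma(s+p-\frac n2)\lambda^{\frac n2-s-p}/\Gamma(s+p)$, multiplied by $(4\pi)^{-n/2}$.

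\textbf{Case (iii), $n<2s$.} Here every $\alpha\ge s$ satisfies $\alpha>n/2$. By monotone convergence in $t$ (as $x\to0$, $e^{-|x|^2/4t}\uparrow1$) applied directly in the double integral \eqref{eq:K-log-heat}, we get
\[
K_{s+\ln}^{\lambda}(x)\uparrow\int_s^\infty\frac{\Gamma(\alpha-\frac n2)}{(4\pi)^{n/2}\Gamma(\alpha)}\lambda^{\frac n2-\alpha}\,d\alpha .
\]
This is finite because $\Gamma(\alpha-\frac n2)/\Gamma(\alpha)\sim\alpha^{-n/2}$ and $\lambda^{-\alpha}$ decays exponentially, since $\lambda>1$. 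This is exactly the stated constant, and continuity follows.

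\textbf{Case (i), $n>2s$.} Set $r=|x|$ and $L=\ln(1/r^2)$. By the scaling formula, the contribution from $p\in[0,\delta]$ is
\[
\int_0^\delta r^{2s-n}\,r^{2p}\,c(s+p,x)\,dp,
\]
where $c(\alpha,x)\to c_0(\alpha)=\Gamma(\frac n2-\alpha)/(\pi^{n/2}4^\alpha\Gamma(\alpha))$ uniformly for $\alpha\in[s,s+\delta]$ with $s+\delta<n/2$. Since $r^{2p}=e^{-pL}$, Laplace's method in $p$ gives
\[
\int_0^\delta e^{-pL}\,c(s+p,x)\,dp=\frac{c_0(s)+o(1)}{L}.
\]
This is the stated limit, since $c_0(s)=\Gamma(\frac n2-s)/(\pi^{n/2}2^{2s}\Gamma(s))$. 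The remaining range $p>\delta$ must be shown to be $o(r^{2s-n}/L)$. For $\alpha$ in $[s+\delta,\frac n2-\delta']$, the kernel is $O(r^{2\alpha-n})$, which is $O(r^{2s-n+2\delta})$. Near $\alpha=n/2$ the kernel is $O(1+|\ln r|)$, and above it the kernel is bounded by the $\lambda^{-\alpha}$-summable constants of case (iii). All of these are negligible against $r^{2s-n}/L$. The one delicate point is uniformity near $\alpha=n/2$. There I would use the crude bound $G^\lambda_\alpha(x)\le C r^{2\alpha-n}\,|\Gamma(\frac n2-\alpha)|$ obtained by dropping $e^{-\lambda t}$ on $t\le1$. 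Alternatively, I would bound $\int_0^\infty G^\lambda_{s+p}\,dp$ over $p\ge\delta$ by the heat form $\int_0^\infty e^{-\lambda t}p_t(x)\,t^{s+\delta}\Phi(t)\,dt/t$, where $\Phi$ grows at most like $e^{Ct^{\epsilon}}$-type. Both integrals are at most a constant times $r^{2s+2\delta-n}\ln(1/r)$.

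\textbf{Case (ii), $n=2s$.} This is the main obstacle. Every $\alpha>s$ lies in the continuous regime, but the constant $\Gamma(\alpha-\frac n2)/\Gamma(\alpha)\,\lambda^{n/2-\alpha}$ behaves like $1/((\alpha-\frac n2)\Gamma(\frac n2))$ near $\alpha=n/2$. So its $\alpha$-integral diverges logarithmically, and the cutoff is supplied by $x$. Writing $p=\alpha-n/2$, the profile $G^\lambda_{n/2+p}(x)$ is comparable to $\frac{1}{(4\pi)^{n/2}\Gamma(n/2)}\cdot\frac{1-r^{2p}}{p}$ up to bounded errors. This comes from evaluating
\[
\int_0^\infty e^{-\lambda r^2u}\,u^{p-1}e^{-1/4u}\,du=\Gamma(p)\bigl[\lambda^{-p}r^{-2p}+O(1)\bigr]
\]
for small $p$. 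Integrating over $p\in(0,1)$ then gives
\[
\int_0^1\frac{1-e^{-pL}}{p}\,dp=\ln L+O(1)=\ln\ln\frac1r+O(1).
\]
Meanwhile the part $p\ge1$ stays bounded by case (iii), and the $O(1)$ errors are integrable in $p$. Dividing by $\ln\ln(1/r)$ gives the constant $1/((4\pi)^{n/2}\Gamma(\frac n2))$. The step I expect to need most care is making the uniform-in-small-$p$ expansion precise. I would do this by splitting the $u$-integral at $u=1$: on $u\le1$ the integral is $O(1)$ uniformly in $p$, and on $u>1$ we have $e^{-1/4u}=1+O(1/u)$, which reduces it to an incomplete gamma function.
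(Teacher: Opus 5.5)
Your treatment of (iii) and of the main piece $p\in[0,\delta]$ in (i) is the paper's argument: monotone convergence for (iii), and for (i) the scaling $t=r^2u$ followed by dominated convergence after $q=pL$.

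The genuine difference is in (ii).
\begin{itemize}
\item The paper integrates in $p$ first. It shows $H(t)=\int_0^\infty t^p/\Gamma(\frac n2+p)\,dp=\frac{1}{\Gamma(\frac n2)\ln(1/t)}+O\bigl(\ln^{-2}(1/t)\bigr)$ as $t\to0$, evaluates $\int_{r^2}^{\delta}\frac{dt}{t\ln(1/t)}$, and controls $t\ge\delta$ through the large-time asymptotics of $H_s$ (Lemma~\ref{lem:Hs-large-time}).
\item You sum the same double integral in the other order. Each $G^\lambda_{n/2+p}(x)$ carries the profile $\frac{1-r^{2p}}{p}$, and $\int_0^1\frac{1-e^{-pL}}{p}\,dp=\ln L+O(1)$. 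The range $p\ge1$ is handled by the explicit, $\lambda^{-p}$-summable values $G^\lambda_{n/2+p}(0)$.
\end{itemize}
Your order makes visible that the double logarithm is the $1/(\alpha-\frac n2)$ pole of $G^\lambda_\alpha(0)$, cut off at $\alpha-\frac n2\sim 1/\ln(1/r)$, and it needs no large-time lemma in (ii). The paper's order instead reduces everything to one-variable asymptotics in $t$.

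Two steps need repair as written.

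\textbf{The expansion in (ii).} The display $\Gamma(p)\bigl[\lambda^{-p}r^{-2p}+O(1)\bigr]$ is too weak. After multiplying by $r^{2p}$ the error is of size $r^{2p}/p$, and $\int_0^1 r^{2p}p^{-1}\,dp=\infty$. What you need is
\[
\int_0^\infty e^{-\lambda r^2u}u^{p-1}e^{-1/(4u)}\,du=\Gamma(p)(\lambda r^2)^{-p}-\frac1p+O(1)
\]
uniformly for $0<p\le\frac12$. Your split at $u=1$ does give this, once you use $(\lambda r^2)^{-p}\gamma(p,\lambda r^2)=\frac1p+O(r^2)$ for the lower incomplete gamma function. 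Only this sharper form yields your stated profile $\frac{1-r^{2p}}{p}+O(1)$.

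\textbf{The tail of (i) near $\alpha=\frac n2$.} The bound $G^\lambda_\alpha(x)\le Cr^{2\alpha-n}|\Gamma(\frac n2-\alpha)|$ fails exactly where you invoke it. It exists only for $\alpha<\frac n2$, and $|\Gamma(\frac n2-\alpha)|\sim(\frac n2-\alpha)^{-1}$ is not integrable up to $\frac n2$.

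Your second route is the paper's and does work, but your $\Phi$ (which is $H_{s+\delta}$) grows like $e^t t^{1-s-\delta}$ by Lemma~\ref{lem:Hs-large-time}, not like $e^{Ct^\varepsilon}$. The region $t\ge1$ is finite only because $\lambda>1$, and the tail is then $O(r^{2s+2\delta-n})+O(1)$.

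Alternatively, split the $t$-integral at $r^2$ and at $1$. This gives $G^\lambda_\alpha(x)\lesssim r^{-2\varepsilon}\bigl(1+\ln\frac1r\bigr)$ uniformly for $|\alpha-\frac n2|\le\varepsilon$, which is negligible against $r^{2s-n}/\ln(1/r)$ once $2\varepsilon<n-2s$.

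A minor slip: the limit of your $u$-integral is $\Gamma(\frac n2-s-p)\,4^{\,n/2-s-p}$, not $4^{\,s+p-n/2}$. Your $c_0$ is nevertheless correct.
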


We next turn to the far-field regime. This part requires a different
asymptotic mechanism from the near-field analysis. In the near-field case, the
singularity is produced by the small-time behaviour of the heat kernel, whereas
in the far-field regime the dominant contribution comes from a time scale which
moves to infinity together with \(|x|\). The main difficulty is therefore to
identify the correct moving scale and to prove that all remaining time regions
give lower-order contributions. We overcome this by combining the large-time
asymptotics of the auxiliary function 
\[
H_s(t):=\int_0^\infty \frac{t^p}{\Gamma(s+p)}\,dp
=
e^t t^{1-s}\left(1+O(t^{-1})\right),
\qquad t\to\infty
\]
with a one-dimensional Laplace
method. This reduces the problem to the analysis of a non-degenerate minimum of
the associated phase function and yields the far-field asymptotic below.

\begin{proposition}\label{lem:Klog-asymp-infty-final}
Let $s>0$ and $\lambda>1$. Then, as $r:=|x|\to\infty$, the kernel
$K_{s+\ln}^{\lambda}(x)$ satisfies
\begin{equation}\label{eq:Klog-asymp-final}
\lim_{r\to\infty}
\Bigl(r^{\frac{n-1}{2}}e^{\sqrt{\lambda-1}\,r}\Bigr)\,
K_{s+\ln}^{\lambda}(x)
=\frac{(\lambda-1)^{\frac{n-3}{4}}}{2^{\frac{n+1}{2}}\pi^{\frac{n-1}{2}}}.
\end{equation}
\end{proposition}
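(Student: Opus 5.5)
The plan is to reduce $K_{s+\ln}^{\lambda}$ to a one-dimensional integral in the heat time $t$, replace the $\alpha$-integral by the auxiliary function $H_s$, and then apply Laplace's method with large parameter $r$. Starting from \eqref{eq:K-log-heat} of Proposition~\ref{lem:log-bessel-kernel}, all integrands are nonnegative, so Tonelli lets us exchange the $\alpha$- and $t$-integrations. Substituting $\alpha=s+p$ gives
\[
K_{s+\ln}^{\lambda}(x)=\int_0^\infty e^{-\lambda t}\,t^{s-1}\,H_s(t)\,p_t(x)\,dt,
\qquad H_s(t)=\int_0^\infty\frac{t^p}{\Gamma(s+p)}\,dp .
\]
I will take the large-time expansion $H_s(t)=e^t t^{1-s}(1+O(t^{-1}))$ quoted before the statement as given. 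I also need a crude bound on bounded intervals: for $t\le T_0$, $H_s(t)\le C(T_0)$, since $\Gamma(s+p)$ grows superexponentially in $p$. Heuristically, the integrand then becomes
\[
(4\pi t)^{-n/2}\exp\bigl(-\phi_r(t)\bigr),\qquad \phi_r(t)=(\lambda-1)t+\frac{r^2}{4t},
\]
which is exactly the heat-kernel representation of the Yukawa kernel of $(\lambda-1)I-\Delta$. This explains the rate $\sqrt{\lambda-1}$ in \eqref{eq:Klog-asymp-final}.

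\textbf{Step 1: splitting the $t$-integral.} Fix a large $T_0$ and split at $t=T_0$.
\begin{itemize}
\item[(a)] On $(0,T_0]$ the integrand is at most $C\,t^{s-1-n/2}e^{-r^2/(4t)}$. This contributes $O\bigl(e^{-r^2/(8T_0)}\bigr)$, which is negligible against $e^{-\sqrt{\lambda-1}\,r}$.
\item[(b)] On $[T_0,\infty)$ I write $H_s(t)=e^t t^{1-s}(1+\varepsilon(t))$ with $|\varepsilon(t)|\le C/t$.
\end{itemize}

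\textbf{Step 2: rescaling to Laplace form.} In the integral over $[T_0,\infty)$ set $t=t_*\tau$, where $t_*=r/(2\sqrt{\lambda-1})$ is the unique minimizer of $\phi_r$. Then $\phi_r(t)=\tfrac{\sqrt{\lambda-1}}{2}\,r\,(\tau+\tau^{-1})$, so the problem becomes a Laplace integral in $\tau$ with large parameter $r$. The phase $\tau+\tau^{-1}$ has a nondegenerate minimum $2$ at $\tau=1$, with second derivative $2$. The prefactor $(4\pi t_*\tau)^{-n/2}$ is smooth and positive near $\tau=1$.

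\textbf{Step 3: the leading term.} The standard Laplace method gives
\[
\int (4\pi t)^{-n/2}e^{-\phi_r(t)}\,dt\sim(4\pi t_*)^{-n/2}e^{-\sqrt{\lambda-1}\,r}\sqrt{\frac{2\pi}{\phi_r''(t_*)}},
\qquad \phi_r''(t_*)=\frac{4(\lambda-1)^{3/2}}{r}.
\]
Simplifying, $(2\pi r/\sqrt{\lambda-1})^{-n/2}\sqrt{\pi r/(2(\lambda-1)^{3/2})}$ equals $r^{-\frac{n-1}{2}}(\lambda-1)^{\frac{n-3}{4}}\,2^{-\frac{n+1}{2}}\pi^{-\frac{n-1}{2}}$, which is exactly the constant in \eqref{eq:Klog-asymp-final}.

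\textbf{Step 4: controlling the errors (main obstacle).} The delicate part is justifying Step 3 uniformly, i.e. showing the error term and the tail regions do not contribute at leading order.
\begin{itemize}
\item The correction $\varepsilon(t)$ contributes at most $C\int_{T_0}^\infty t^{-1-n/2}e^{-\phi_r(t)}\,dt$. I would split this at $\tau\in[1/2,2]$.
\item Inside $[1/2,2]$ we have $t\sim r$, so this is $O(r^{-1})$ times the main term.
\item Outside $[1/2,2]$ we have $\tau+\tau^{-1}\ge 2+c$, so the contribution is $O\bigl(e^{-(1+c/2)\sqrt{\lambda-1}\,r}\bigr)$.
\item The lower cutoff $\tau\ge T_0/t_*\to0$ only removes a region where the phase is huge, so it is harmless.
\item The same splitting handles the main term's tails, which are needed to apply Laplace's method on a compact $\tau$-neighbourhood of $1$.
\end{itemize}
Multiplying by $r^{\frac{n-1}{2}}e^{\sqrt{\lambda-1}\,r}$ and letting $r\to\infty$ then yields the stated limit.

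If the expansion of $H_s$ has to be proved rather than assumed, I would use the Gamma-function reciprocal $1/\Gamma(s+p)=\frac{1}{2\pi i}\int e^{z}z^{-s-p}\,dz$ over a Hankel contour. Integrating in $p$ then gives $\frac{1}{2\pi i}\int e^z z^{-s}(\ln z-\ln t)^{-1}\,dz$, whose residue at $z=t$ yields $e^t t^{1-s}$. The remaining contour contributes lower order, giving the $O(t^{-1})$ relative error.
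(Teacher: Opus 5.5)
Your proposal is correct and follows essentially the same route as the paper. Both arguments use Tonelli to reduce to $\int_0^\infty e^{-\lambda t}t^{s-1}H_s(t)\,p_t(x)\,dt$, then the large-time expansion of $H_s$ from Lemma~\ref{lem:Hs-large-time}, then a rescaling that centers a Laplace integral at the minimizer of $(\lambda-1)t+\frac{r^2}{4t}$, and finally exponential localization of the remaining time regions. The only differences are cosmetic: you rescale by $t_*=r/(2\sqrt{\lambda-1})$ instead of by $r$, and you handle small times with a bounded-interval bound on $H_s$ plus the Gaussian factor, rather than the paper's global bound $H_s(t)\le C_s e^t(1+t)^M$. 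Your observation that the leading integrand is the heat representation of the kernel of $((\lambda-1)I-\Delta)^{-1}$ nicely explains why the limit constant does not depend on $s$.
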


A striking feature of \eqref{eq:Klog-asymp-final} is that both the decay rate and the exact leading constant are independent of $s$. Combined with Proposition~\ref{prop:kernel_asymptotics}, this yields a decisive global integrability gain. In the classical case, the Riesz and Bessel kernels behave like $|x|^{2s-n}$ near the origin and therefore fail at the endpoint exponent $L^{\frac{n}{n-2s}}$ when $n>2s$. By contrast, Proposition~\ref{prop:kernel_asymptotics}\textnormal{(i)} shows that the logarithmic correction weakens the singularity just enough to restore strong endpoint integrability:
\[
K_{s+\ln}^{\lambda}\in L^{\frac{n}{n-2s}}_{\mathrm{loc}}(\mathbb R^n),
\qquad n>2s,
\]
while Proposition~\ref{lem:Klog-asymp-infty-final} provides exponential decay at infinity. Consequently,
\[
K_{s+\ln}^{\lambda}\in L^{q}(\mathbb R^n)
\qquad\text{for every }q\in\Bigl[1,\frac{n}{n-2s}\Bigr]
\]
in the subcritical regime $n>2s$. This strong endpoint integrability is one of the central analytic inputs of the paper.

\medskip

We now introduce the homogeneous counterpart of the logarithmic Bessel potential, namely the \emph{logarithmic Riesz potential}, and explain the functional setting in which both the potential and the operator $(-\Delta)^{s+\ln}$ are naturally defined. Let $s>0$. The logarithmic Riesz potential of order $s$ is the Fourier multiplier with symbol
\[
\widehat{K_{s+\ln}}(\xi)
:=
\frac{1}{(4\pi^2|\xi|^2)^s\,\ln(4\pi^2|\xi|^2)},
\qquad \xi\in\mathbb R^n\setminus\{0\}.
\]
A basic difficulty is that neither $\widehat{K_{s+\ln}}$ nor the symbol
\[
m_{s+\ln}(\xi):=(4\pi^2|\xi|^2)^s\,\ln(4\pi^2|\xi|^2)
\]
acts continuously on the full Schwartz space $\mathcal S(\mathbb R^n)$, because of the logarithmic singularity at $\xi=0$. In particular, the maps
$\psi\mapsto \widehat{K_{s+\ln}}\psi$ and $\psi\mapsto m_{s+\ln}\psi$ do not in general preserve $\mathcal S(\mathbb R^n)$, so a direct definition on $\mathcal S'(\mathbb R^n)$ would be ambiguous up to polynomials.

To avoid this difficulty, we work in the standard homogeneous Schwartz subspaces
$\mathcal S_\infty(\mathbb R^n)$ and $\mathcal S_0(\mathbb R^n)$; see
\cite[Section~2.3.4]{grafakos2008classical} and \cite[Section~5.1.2]{Triebel1983Theory}. A fundamental fact is that the Fourier transform restricts to a linear homeomorphism
\[
\mathcal F:\mathcal S_\infty(\mathbb R^n)\to\mathcal S_0(\mathbb R^n),
\qquad
\mathcal F^{-1}:\mathcal S_0(\mathbb R^n)\to\mathcal S_\infty(\mathbb R^n).
\]
This allows us to define the Fourier transform on $\mathcal S_\infty'$ and $\mathcal S_0'$ by duality; see Section~\ref{moment} for details. Within this homogeneous framework, the fractional--logarithmic Laplacian is defined unambiguously as a Fourier multiplier on $\mathcal S_\infty'$.

\begin{definition}\label{def:frac-log-lap}
Let $s>0$ and let $u\in\mathcal S_\infty'(\mathbb R^n)$. Define
$(-\Delta)^{s+\ln}u\in\mathcal S_\infty'(\mathbb R^n)$ by prescribing its Fourier
transform in $\mathcal S_0'(\mathbb R^n)$:
\[
\mathcal F\bigl((-\Delta)^{s+\ln}u\bigr)
:= m_{s+\ln}\,\widehat u
\quad\text{in }\mathcal S_0'(\mathbb R^n),
\]
that is, for every $\psi\in\mathcal S_0(\mathbb R^n)$,
\[
\big\langle \mathcal F\bigl((-\Delta)^{s+\ln}u\bigr),\psi\big\rangle
:=\big\langle \widehat u,\ m_{s+\ln}\psi\big\rangle.
\]
Equivalently,
\[
(-\Delta)^{s+\ln}u
=\mathcal F^{-1}\!\bigl(m_{s+\ln}\,\widehat u\bigr)
\qquad\text{in }\mathcal S_\infty'(\mathbb R^n).
\]
\end{definition}

When $s\in(0,1)$, this definition agrees with the fractional--logarithmic Laplacian studied in \cite{chen2026fractional}. More precisely, for $u\in C_c^2(\mathbb R^n)$ one has the singular-integral representation
\[
(-\Delta)^{s+\ln}u(x)
=
c_{n,s}\,\mathcal L_1u(x)
+
b_{n,s}\,(-\Delta)^s u(x),
\qquad x\in\mathbb R^n,
\]
where
\[
c_{n,s}:=2^{2s}\pi^{-\frac n2}s\,\frac{\Gamma\!\left(\frac{n+2s}{2}\right)}{\Gamma(1-s)},
\qquad
b_{n,s}:=\frac{d}{ds}c_{n,s}
=
\ln 4+\frac1s+\psi(1-s)+\psi\!\left(\frac{n+2s}{2}\right),
\]
and
\begin{equation}\label{eq:L1-def}
\mathcal L_1u(x)
:=
\mathrm{p.v.}\int_{\mathbb R^n}\frac{u(x)-u(y)}{|x-y|^{n+2s}}
\bigl(-2\ln|x-y|\bigr)\,dy.
\end{equation}
In particular,
\[
(-\Delta)^{s+\ln}u
=
\left.\frac{d}{dt}(-\Delta)^t u\right|_{t=s},
\qquad u\in C_c^2(\mathbb R^n),
\]
so the operator may be interpreted as the derivative of the fractional Laplacian with respect to the order.

\medskip

Having introduced both the homogeneous and inhomogeneous logarithmic potentials, we next construct an explicit structural bridge between them. In the classical theory, the difference between massive and massless symbols can often be absorbed into a regularizing convolution kernel; see, for example, \cite[Section~3.2, Lemma~2]{stein1970singular}. In the present logarithmic setting, however, the additional $\ln$-factor destroys the relevant algebraic simplifications, so a different mechanism is needed. Our first step is a general criterion ensuring that a bounded multiplier with controlled derivatives and a mild logarithmic singularity at the origin is the Fourier transform of an $L^1$-kernel.

\begin{proposition}\label{erjinzhi}
Let $n\ge 1$ and let $f\in L^\infty(\mathbb R^n)\cap C^N(\mathbb R^n\setminus\{0\})$,
where $N>n$ is an integer. Assume that there exist constants $C>0$, $\delta_1>0$,
$\delta_2>0$ such that for all multi-indices $|\alpha|\le N$,
\begin{align}
\label{eq:HF_assump}
|\partial^\alpha f(\xi)|
&\le C\,|\xi|^{-\delta_1-|\alpha|},
\qquad &&|\xi|\ge 1,\\
\label{eq:LF_assump}
|\partial^\alpha f(\xi)|
&\le C\,|\xi|^{\delta_2-|\alpha|}\bigl(1+|\ln|\xi||\bigr),
\qquad &&0<|\xi|\le 2.
\end{align}
Then there exists $g\in L^1(\mathbb R^n)$ such that $\widehat g(\xi)=f(\xi)$ for a.e. $\xi\in\mathbb R^n$.
\end{proposition}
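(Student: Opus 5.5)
Following the title of the proposition, the plan is a dyadic (Littlewood--Paley) decomposition of $f$. We bound each dyadic piece in $L^1$ by a weighted $L^2$/Sobolev argument and then sum the geometric series.

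\textbf{Decomposition.} Fix $\varphi\in C_c^\infty(\{1/2<|\xi|<2\})$ with $\sum_{j\in\mathbb Z}\varphi(2^{-j}\xi)=1$ for $\xi\ne0$. Set $f_j(\xi):=\varphi(2^{-j}\xi)f(\xi)$ and $g_j:=\mathcal F^{-1}f_j$. Since $f_j$ is $C^N$ with compact support, $g_j$ is smooth and bounded.

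\textbf{Per-piece $L^1$ bound.} Use the standard estimate, valid for $F\in C^N_c$ and $N>n/2$, obtained from Cauchy--Schwarz with weight $(1+R^{-2}|x|^2)^{N}$ together with Plancherel:
\[
\|\mathcal F^{-1}F\|_{L^1}\le C\,R^{n/2}\Bigl(\|F\|_{L^2}+\sum_{|\alpha|= N}R^{-N}\|\partial^\alpha F\|_{L^2}\Bigr).
\]
Apply it with $R=2^{-j}$, the natural spatial scale of a function supported in $|\xi|\sim 2^j$. By Leibniz's rule, $|\partial^\alpha f_j|\lesssim \sum_{\beta\le\alpha}2^{-j|\alpha-\beta|}|\partial^\beta f|$ on the annulus $|\xi|\sim2^j$. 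The hypotheses then give the following.
\begin{itemize}
\item For $j\ge1$, \eqref{eq:HF_assump} yields $|\partial^\alpha f_j|\lesssim 2^{-j\delta_1-j|\alpha|}$. Each $L^2$ norm over the annulus gains a factor $2^{jn/2}$, so $\|g_j\|_{L^1}\lesssim 2^{-jn/2}\cdot 2^{jn/2}2^{-j\delta_1}=2^{-j\delta_1}$.
\item For $j\le 0$, \eqref{eq:LF_assump} yields $|\partial^\alpha f_j|\lesssim 2^{j\delta_2-j|\alpha|}(1+|j|)$, and the same computation gives $\|g_j\|_{L^1}\lesssim 2^{j\delta_2}(1+|j|)$.
\end{itemize}
The hypothesis $N>n$ is more than enough for $N>n/2$. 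Alternatively, one can run the argument pointwise: bound $|g_j(x)|\lesssim \min(2^{jn},\,2^{jn}|2^jx|^{-N})$ times the symbol size via $N$ integrations by parts, and integrate. This pointwise route needs $N>n$, which explains the hypothesis.

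\textbf{Summation and identification.} The two bounds give $\sum_j\|g_j\|_{L^1}\lesssim\sum_{j\ge1}2^{-j\delta_1}+\sum_{j\le0}2^{j\delta_2}(1+|j|)<\infty$. Hence $g:=\sum_j g_j$ converges in $L^1$. It remains to check that $\widehat g=f$ a.e. The partial sums $\sum_{|j|\le M}f_j$ converge to $f$ pointwise on $\xi\ne0$, and they are dominated by $C\|f\|_\infty$ since at most two pieces overlap at each point, so they converge in $\mathcal S'$. Meanwhile, $\widehat{g}$ is the uniform limit of the partial sums $\sum\widehat{g_j}$ because the $g_j$ converge in $L^1$. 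The two limits therefore agree a.e.

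\textbf{Main obstacle.} The delicate point is the low-frequency pieces. Each derivative of $f$ costs $|\xi|^{-1}$, and the estimate survives only because this loss is exactly compensated by the scaling $R=2^{-j}$. The logarithmic factor $1+|\ln|\xi||\sim 1+|j|$ is harmless only because the power $2^{j\delta_2}$ with $\delta_2>0$ beats it. The Leibniz computation must be tracked carefully so that no extra negative power of $2^j$ appears from the cutoff derivatives; the cutoff derivatives scale the same way as the derivatives of $f$, so none does.
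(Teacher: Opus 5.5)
Your proposal is correct and follows essentially the same route as the paper: the same dyadic splitting $f_j=\varphi(2^{-j}\cdot)f$, the same per-annulus bounds $\|g_j\|_{L^1}\lesssim 2^{-j\delta_1}$ for $j\ge1$ and $\|g_j\|_{L^1}\lesssim(1+|j|)2^{j\delta_2}$ for $j\le0$, and the same absolutely convergent $L^1$ sum with $\widehat g=f$ identified through uniform convergence of the Fourier transforms. The only variation is how you prove the annulus $L^1$ estimate: you use weighted Cauchy--Schwarz plus Plancherel, which needs only $N>n/2$, whereas the paper's Lemma~\ref{lem:bernstein_annulus} integrates by parts $N$ times to get pointwise $|x|^{-N}$ decay. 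That pointwise step is where the paper uses $N>n$, and it is exactly the alternative route you mention.
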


Its proof is based on a dyadic Littlewood--Paley decomposition. The high-frequency decay guarantees summability on positive dyadic scales, while the low-frequency gain compensates for the logarithmic singularity near the origin.

Applying Proposition~\ref{erjinzhi} to a suitable transition multiplier yields the desired bridge between the inhomogeneous symbol
\[
(\lambda+4\pi^2|\xi|^2)^s\ln(\lambda+4\pi^2|\xi|^2)
\]
and its homogeneous counterpart
\[
(4\pi^2|\xi|^2)^s\ln(4\pi^2|\xi|^2).
\]

\begin{proposition}\label{prop:bridge-measures}
Let $n\ge1$, $\lambda>1$, and $s>0$. Define
\[
m(\xi)
:=\frac{(4\pi^2|\xi|^2)^s\,\ln (4\pi^2|\xi|^2)}
{(\lambda+4\pi^2|\xi|^2)^s\,\ln(\lambda+4\pi^2|\xi|^2)},
\qquad \xi\in\mathbb R^n,
\]
and extend $m$ continuously at $\xi=0$ by setting $m(0)=0$.
Then:

\noindent\textnormal{(i)} There exists $g_{s,\lambda}\in L^1(\mathbb R^n)$ such that
\begin{equation}\label{eq:m-splitting}
m(\xi)=1+\widehat g_{s,\lambda}(\xi),
\qquad \xi\in\mathbb R^n.
\end{equation}

\noindent\textnormal{(ii)} Consequently, there exist finite measures
$\mu_{s,\lambda}$ and $w_{s,\lambda}$ on $\mathbb R^n$ such that
\begin{equation}\label{eq:measure-decomp}
(\lambda + 4\pi^2|\xi|^2)^s \ln (\lambda + 4\pi^2|\xi|^2)
=\widehat{\mu_{s,\lambda}}(\xi)
+\widehat{w_{s,\lambda}}(\xi)\,(4\pi^2|\xi|^2)^s\,\ln (4\pi^2|\xi|^2),
\qquad \xi\in\mathbb R^n,
\end{equation}
and moreover $\widehat{\mu_{s,\lambda}},\widehat{w_{s,\lambda}}\in\mathcal O_M(\mathbb R^n)$,
where $\mathcal O_M(\mathbb R^n)$ is defined in \eqref{omom}.
\end{proposition}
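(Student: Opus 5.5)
Part (i) will follow from Proposition~\ref{erjinzhi} applied to $f:=m-1$. We first note that $m$ is continuous on $\mathbb R^n$ and smooth on $\mathbb R^n\setminus\{0\}$, with $m(0)=0$, and that it is bounded: for large $|\xi|$ the ratio tends to $1$, and near the origin it tends to $0$. Since $\lambda>1$, the denominator satisfies $\ln(\lambda+4\pi^2|\xi|^2)\ge\ln\lambda>0$, so it never vanishes. Hence $f=m-1\in L^\infty\cap C^\infty(\mathbb R^n\setminus\{0\})$, and we may take any integer $N>n$.

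\textbf{High frequency ($|\xi|\ge1$).} Write $\rho=4\pi^2|\xi|^2$ and
\[
m=\Bigl(\frac{\rho}{\lambda+\rho}\Bigr)^s\cdot\frac{\ln\rho}{\ln(\lambda+\rho)}.
\]
For the first factor, $(1+\lambda/\rho)^{-s}=1+O(\rho^{-1})$, and each derivative gains a factor $|\xi|^{-1}$ since it is a smooth function of $\lambda/\rho$, which is a symbol of order $-2$. For the second factor,
\[
\frac{\ln\rho}{\ln(\lambda+\rho)}=1-\frac{\ln(1+\lambda/\rho)}{\ln(\lambda+\rho)},
\]
and the correction is $O(\rho^{-1}/\ln\rho)$. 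The differentiated versions follow from symbol calculus: $\ln(1+\lambda/\rho)$ is a classical symbol of order $-2$, and $1/\ln(\lambda+\rho)$ satisfies $|\partial^\alpha|\lesssim|\xi|^{-|\alpha|}$. Thus $|\partial^\alpha f|\le C|\xi|^{-2-|\alpha|}$ for $|\xi|\ge1$, which gives \eqref{eq:HF_assump} with $\delta_1=2$. There is a small subtlety near $|\xi|=1$, where $\ln\rho$ might vanish, but there only boundedness of derivatives on a compact annulus is needed, and that is trivial.

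\textbf{Low frequency ($0<|\xi|\le2$).} Here $f=-1+m$ with $m=\rho^s\ln\rho\cdot h(\rho)$, where $h(\rho)=(\lambda+\rho)^{-s}/\ln(\lambda+\rho)$ is smooth on $[0,\infty)$. The term $-1$ is the problem: its derivatives vanish, but $|f|\approx1$ near $0$, so \eqref{eq:LF_assump} fails with $\delta_2>0$ for $\alpha=0$. The fix is to localize. Choose $\chi\in C_c^\infty(B_2)$ with $\chi\equiv1$ on $B_1$, and write
\[
m-1=\bigl(m-\chi\bigr)+\bigl(\chi-1\bigr).
\]
Since $\chi-1=\widehat{\check\chi}-1$ with $\check\chi\in\mathcal S\subset L^1$, it suffices to apply Proposition~\ref{erjinzhi} to $f:=m-1+(1-\chi)=m-\chi$. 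This $f$ still satisfies the high-frequency bound, because it equals $m-1$ for $|\xi|\ge2$ and is smooth and bounded on $1\le|\xi|\le2$. Near the origin, $f=m-1$ on $B_1$, so the constant still remains. It is cleaner to let $\chi$ absorb it instead: apply the proposition to $\tilde f:=m\,\chi$ and to $(m-1)(1-\chi)$ separately. The function $(m-1)(1-\chi)$ vanishes near $0$, so \eqref{eq:LF_assump} holds for it trivially (with bounded constants on $1\le|\xi|\le2$), and it has the high-frequency decay above. The function $m\chi$ is compactly supported, so \eqref{eq:HF_assump} is trivial, and near $0$ it satisfies $|\partial^\alpha(m\chi)|\lesssim|\xi|^{2s-|\alpha|}(1+|\ln|\xi||)$ by the product rule applied to $|\xi|^{2s}\ln|\xi|$ times smooth factors. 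This gives $\delta_2=2s$. Hence $m-1=\widehat{g_1}+\widehat{g_2}$ with $g_1,g_2\in L^1$, and we set $g_{s,\lambda}:=g_1+g_2$. The equality holds everywhere, not just almost everywhere, because both sides are continuous. I expect the main obstacle to be this careful verification of the derivative bounds, especially the high-frequency log-ratio estimates.

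\textbf{Part (ii).} Set $w_{s,\lambda}:=(\delta_0+g_{s,\lambda})^{-1}$ in the sense that $\widehat{w}=1/m$? That choice fails, because $m(0)=0$. Instead, rearrange $m=1+\widehat g$ as
\[
(4\pi^2|\xi|^2)^s\ln(4\pi^2|\xi|^2)=(1+\widehat g)\,(\lambda+4\pi^2|\xi|^2)^s\ln(\lambda+4\pi^2|\xi|^2).
\]
This is the reverse direction. To obtain \eqref{eq:measure-decomp}, use the cutoff again:
\[
M_\lambda:=(\lambda+\rho)^s\ln(\lambda+\rho)=\chi M_\lambda+(1-\chi)M_\lambda.
\]
The term $\chi M_\lambda$ is in $C_c^\infty$, so it is $\widehat{\mu}$ with $\mu$ a Schwartz function, which gives a finite measure whose transform lies in $\mathcal O_M$. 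For the other term, write $(1-\chi)M_\lambda=\frac{1-\chi}{m}\,m_{s+\ln}$. The factor $(1-\chi)/m$ vanishes near $0$, where $m$ would be singular, and equals $1+O(|\xi|^{-2})$ with symbol-type derivatives at infinity, by the same computation as before with $m$ replaced by $1/m$. So $(1-\chi)/m-1$ satisfies the hypotheses of Proposition~\ref{erjinzhi} up to the same cutoff splitting, and $(1-\chi)/m=\widehat{w}$ with $w=\delta_0+L^1$. Both $\widehat\mu$ and $\widehat w$ are smooth with all derivatives bounded (for $\widehat w$ this follows from the symbol estimates), hence they lie in $\mathcal O_M$.
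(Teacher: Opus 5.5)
Your part (i) is the paper's argument in packaged form. The paper runs the dyadic estimate of Lemma~\ref{lem:bernstein_annulus} directly on $\sum_{j\le 0}\varphi_j m=\chi m$ and $\sum_{j\ge1}\varphi_j(m-1)=(1-\chi)(m-1)$, which is exactly Proposition~\ref{erjinzhi} applied to your two pieces.

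There is one bookkeeping slip to fix. Since $m\chi+(m-1)(1-\chi)=m-1+\chi$, you need $g_{s,\lambda}:=g_1+g_2-\mathcal F^{-1}\chi$; the paper carries this $-\mathcal F^{-1}(\chi)$ term.

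Also, $\ln(4\pi^2|\xi|^2)$ vanishes on the sphere $|\xi|=\frac{1}{2\pi}$, not near $|\xi|=1$. This does not matter in (i), since you never divide by it. In (ii), however, it gives a second singular set of $1/m$. It is your choice $\chi\equiv1$ on $B_1$ that makes $(1-\chi)/m$ smooth, and you should say so.

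Part (ii) takes a genuinely different route. The paper forms $f=g_{s,\lambda}+c_0K^\lambda_{s+\ln}$ with $c_0=\frac{2}{es}$, so that $1+\widehat f=(m_{s+\ln}+c_0)/M_\lambda>0$ by $t^s\ln t\ge-\frac1{es}$. Here $M_\lambda=(\lambda+4\pi^2|\xi|^2)^s\ln(\lambda+4\pi^2|\xi|^2)$. It then invokes Wiener's $L^1$-inversion theorem to obtain $\Phi\in L^1$ with $(1+\widehat f)^{-1}=1+\widehat\Phi$, and sets $w=\delta_0+\Phi$, $\mu=c_0w$.

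You instead split $M_\lambda=\chi M_\lambda+(1-\chi)M_\lambda$. You take $\mu$ with Schwartz density and $\widehat\mu=\chi M_\lambda$, and $\widehat w=(1-\chi)/m=1-\chi+(1-\chi)(1/m-1)$, i.e.\ $w=\delta_0-\mathcal F^{-1}\chi+L^1$.

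What your version buys:
\begin{itemize}
\item It needs no Banach-algebra input. It does not even use (i), only the same symbol estimates applied to $1/m$.
\item It actually proves the $\mathcal O_M$ clause: $\widehat\mu\in C_c^\infty$, and $\widehat w$ is smooth with bounded derivatives.
\end{itemize}
The paper's construction does not establish that clause. There $\widehat w=M_\lambda/(m_{s+\ln}+c_0)$ with $\widehat w(0)\ne0$, so $\widehat w$ inherits the $|\xi|^{2s}\ln|\xi|$ non-smoothness of $m_{s+\ln}$ at the origin and is not $C^\infty$. This is harmless for the later use, where these symbols only multiply elements of $\mathcal S_0$.

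What the paper's route buys in exchange is a nowhere-vanishing $\widehat w$, whose reciprocal $1+\widehat f$ is again the transform of an element of $\mathbb C\delta_0+L^1$, with $\mu$ proportional to $w$. Your $\widehat w$ vanishes on $B_1$. That costs nothing in Theorem~\ref{thm:Lp-log-Riesz}, where only finiteness of the two measures is used.
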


This bridge is the key tool that allows us to compare regularity for the homogeneous equation with the inhomogeneous logarithmic Bessel scale.

\medskip

We now introduce the main spaces of the paper. Let $s>0$, $\lambda>1$, and
$1\le p\le\infty$. The \emph{logarithmic Bessel space} is defined by
\[
\mathcal{L}^{p}_{s+\ln,\lambda}(\mathbb R^n)
:=
\Bigl\{u\in\mathcal S'(\mathbb R^n):
(\lambda I-\Delta)^{s+\ln}u\in L^p(\mathbb R^n)\Bigr\},
\]
with norm
\[
\|u\|_{\mathcal{L}^{p}_{s+\ln,\lambda}(\mathbb R^n)}
:=
\bigl\|(\lambda I-\Delta)^{s+\ln}u\bigr\|_{L^p(\mathbb R^n)}.
\]
We also recall the classical Bessel potential space
\[
\mathcal{L}_s^p(\mathbb R^n)
:=
\Bigl\{u\in\mathcal S'(\mathbb R^n):
\mathcal F^{-1}\bigl((1+4\pi^2|\xi|^2)^{s/2}\widehat u(\xi)\bigr)\in L^p(\mathbb R^n)\Bigr\},
\]
equipped with the norm
\[
\|u\|_{\mathcal{L}_s^p(\mathbb R^n)}
:=
\Bigl\|
\mathcal F^{-1}\bigl((1+4\pi^2|\xi|^2)^{s/2}\widehat u\bigr)
\Bigr\|_{L^p(\mathbb R^n)}.
\]
Thus $(\lambda I-\Delta)^{s+\ln}$ plays the role of an inhomogeneous logarithmic differential operator of order $2s$, and $\mathcal{L}^{p}_{s+\ln,\lambda}$ is the corresponding logarithmic Bessel potential space.

Our first $L^p$ result gives well-posedness and representation for the inhomogeneous equation.

\begin{theorem}\label{thm:Lp-log-Bessel}
Let $s>0$, $\lambda>1$, and $1\le p\le\infty$, and let $f\in L^p(\mathbb R^n)$.
Suppose that $u\in\mathcal S'(\mathbb R^n)$ is a distributional solution of
\begin{equation}\label{eq:lambda-log-eq}
(\lambda I-\Delta)^{s+\ln}u=f
\qquad\text{in }\mathbb R^n.
\end{equation}
Then $u\in\mathcal L^{p}_{s+\ln,\lambda}(\mathbb R^n)$ and
\[
\|u\|_{\mathcal L^{p}_{s+\ln,\lambda}(\mathbb R^n)}=\|f\|_{L^p(\mathbb R^n)}.
\]
Moreover, the solution is uniquely determined and is given by the logarithmic
Bessel potential
\[
u=(\lambda I-\Delta)^{-(s+\ln)}f
=
K^{\lambda}_{s+\ln}*f.
\]
\end{theorem}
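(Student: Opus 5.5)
The plan is to work entirely on the Fourier side in $\mathcal S'(\mathbb R^n)$. The first observation is that the inhomogeneous symbol
\[
\sigma(\xi):=(\lambda+4\pi^2|\xi|^2)^s\ln(\lambda+4\pi^2|\xi|^2)
\]
is smooth. Since $\lambda>1$, it is bounded below by $\lambda^s\ln\lambda>0$, and it has polynomially bounded derivatives of all orders, so $\sigma\in\mathcal O_M(\mathbb R^n)$. The same holds for its reciprocal $1/\sigma$: every derivative of $1/\sigma$ is a finite sum of products of derivatives of $\sigma$ divided by powers of $\sigma$, and $\sigma$ is bounded away from $0$. Consequently, multiplication by $\sigma$ and by $1/\sigma$ are mutually inverse continuous bijections of $\mathcal S$ and, by duality, of $\mathcal S'$. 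So $(\lambda I-\Delta)^{s+\ln}u:=\mathcal F^{-1}(\sigma\widehat u)$ is a well-defined isomorphism of $\mathcal S'$, with inverse $(\lambda I-\Delta)^{-(s+\ln)}f=\mathcal F^{-1}(\sigma^{-1}\widehat f)$.

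\textbf{Uniqueness and the isometry.} If $u\in\mathcal S'$ solves \eqref{eq:lambda-log-eq}, then $\sigma\widehat u=\widehat f$ in $\mathcal S'$. Multiplying by $1/\sigma\in\mathcal O_M$ gives $\widehat u=\sigma^{-1}\widehat f$, so $u=(\lambda I-\Delta)^{-(s+\ln)}f$ is the only tempered solution. The identity $\|u\|_{\mathcal L^p_{s+\ln,\lambda}}=\|f\|_{L^p}$ is then immediate from the definition of the norm, since $(\lambda I-\Delta)^{s+\ln}u=f\in L^p$.

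\textbf{Convolution representation.} It remains to show that $\mathcal F^{-1}(\sigma^{-1}\widehat f)=K^\lambda_{s+\ln}*f$ for $f\in L^p$, $1\le p\le\infty$. By Proposition~\ref{lem:log-bessel-kernel}, $K:=K^\lambda_{s+\ln}=\int_s^\infty G^\lambda_\alpha\,d\alpha$ is a nonnegative $L^1$ function. Its Fourier transform is obtained via Fubini, using $G^\lambda_\alpha\ge0$ and $\|G^\lambda_\alpha\|_{L^1}=\lambda^{-\alpha}$ to justify the interchange:
\[
\widehat K(\xi)=\int_s^\infty(\lambda+4\pi^2|\xi|^2)^{-\alpha}\,d\alpha=\frac{1}{\sigma(\xi)}.
\]
This is presumably the content of Definition~\ref{kernel1}. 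Since $K\in L^1$, Young's inequality gives $K*f\in L^p$ with $\|K*f\|_{L^p}\le\lambda^{-s}(\ln\lambda)^{-1}\|f\|_{L^p}$; in particular $K*f$ is a tempered distribution.

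The main obstacle is the identity $\widehat{K*f}=\widehat K\,\widehat f$ in $\mathcal S'$ when $f\in L^p$ with $p>2$ (including $p=\infty$), where $\widehat f$ is only a distribution. I would verify it by duality. For $\varphi\in\mathcal S$,
\[
\langle K*f,\varphi\rangle=\langle f,\tilde K*\varphi\rangle,\qquad \tilde K(x)=K(-x),
\]
which is justified by Fubini because $K\in L^1$, $f\in L^p$ and $\varphi\in\mathcal S$. The key point is that $\tilde K*\varphi=\mathcal F^{-1}(\sigma^{-1}\,\mathcal F^{-1}\varphi)$ lies in $\mathcal S$, precisely because $1/\sigma\in\mathcal O_M$. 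This reduces the pairing to the definition of the multiplier $\sigma^{-1}$ acting on $\widehat f$ in $\mathcal S'$. Hence $K*f=\mathcal F^{-1}(\sigma^{-1}\widehat f)=u$, which completes the proof. No use of the bridge in Proposition~\ref{prop:bridge-measures} is needed here, because the inhomogeneous symbol never vanishes.
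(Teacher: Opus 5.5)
Your proposal is correct and follows the same route as the paper. You pass to the Fourier side, divide $\sigma\widehat u=\widehat f$ by the nonvanishing symbol, read off the isometry from the definition of the norm, and identify $\mathcal F^{-1}(\sigma^{-1}\widehat f)$ with $K^{\lambda}_{s+\ln}*f$. You also make explicit that $1/\sigma\in\mathcal O_M$, which the paper uses tacitly. The only real difference is that you verify $\widehat{K*f}=\widehat K\,\widehat f$ by a direct duality/Fubini argument rather than through the mollification Lemma~\ref{lem:Fourier-conv-L1-OM}.

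One harmless slip: the correct identity is $\tilde K*\varphi=\mathcal F(\sigma^{-1}\mathcal F^{-1}\varphi)=\mathcal F^{-1}(\sigma^{-1}\widehat\varphi)$. Your expression $\mathcal F^{-1}(\sigma^{-1}\mathcal F^{-1}\varphi)$ is its reflection under $x\mapsto -x$. With $\mathcal F$ on the outside, the chain $\langle f,\tilde K*\varphi\rangle=\langle \widehat f,\sigma^{-1}\mathcal F^{-1}\varphi\rangle=\langle \mathcal F^{-1}(\sigma^{-1}\widehat f),\varphi\rangle$ goes through exactly as you intend.
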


The next theorem concerns the homogeneous equation driven by $(-\Delta)^{s+\ln}$.
Using Proposition~\ref{prop:bridge-measures}, we compare the homogeneous operator with its inhomogeneous counterpart and obtain an a priori estimate in the logarithmic Bessel scale.

\begin{theorem}\label{thm:Lp-log-Riesz}
Let $s>0$, $\lambda>1$, and $1\le p\le\infty$. Assume that $u,f\in L^p(\mathbb R^n)$ and that
\begin{equation}\label{eq:log-Riesz-eq}
(-\Delta)^{s+\ln}u=f
\qquad\text{in }\mathbb R^n
\end{equation}
in the sense of tempered distributions. Then $u\in\mathcal L^{p}_{s+\ln,\lambda}(\mathbb R^n)$
and there exists a constant $C=C(n,s,p,\lambda)>0$ such that
\[
\|u\|_{\mathcal L^{p}_{s+\ln,\lambda}(\mathbb R^n)}
\le
C\bigl(\|f\|_{L^p(\mathbb R^n)}+\|u\|_{L^p(\mathbb R^n)}\bigr).
\]
\end{theorem}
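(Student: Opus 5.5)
The plan is to transfer the equation $(-\Delta)^{s+\ln}u=f$ to the inhomogeneous operator using the measure decomposition \eqref{eq:measure-decomp} of Proposition~\ref{prop:bridge-measures}. Formally,
\[
(\lambda I-\Delta)^{s+\ln}u=\mu_{s,\lambda}*u+w_{s,\lambda}*(-\Delta)^{s+\ln}u=\mu_{s,\lambda}*u+w_{s,\lambda}*f .
\]
Once this identity is justified in $\mathcal S'$, Young's inequality for finite measures gives
\[
\|(\lambda I-\Delta)^{s+\ln}u\|_{L^p}\le \|\mu_{s,\lambda}\|\,\|u\|_{L^p}+\|w_{s,\lambda}\|\,\|f\|_{L^p},
\]
valid for all $1\le p\le\infty$. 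This yields the claim with $C=\max\{\|\mu_{s,\lambda}\|,\|w_{s,\lambda}\|\}$, which depends only on $n,s,\lambda$.

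The key steps, in order, are as follows.

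\emph{Step 1.} Since $u\in L^p\subset\mathcal S'$ and the inhomogeneous symbol $(\lambda+4\pi^2|\xi|^2)^s\ln(\lambda+4\pi^2|\xi|^2)$ is smooth with polynomial growth, hence lies in $\mathcal O_M$, the distribution $(\lambda I-\Delta)^{s+\ln}u$ is well defined in $\mathcal S'$. Its Fourier transform is this symbol times $\widehat u$.

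\emph{Step 2.} By \eqref{eq:measure-decomp}, and because $\widehat{\mu_{s,\lambda}},\widehat{w_{s,\lambda}}\in\mathcal O_M$, we can split
\[
\mathcal F\bigl((\lambda I-\Delta)^{s+\ln}u\bigr)=\widehat{\mu_{s,\lambda}}\,\widehat u+\widehat{w_{s,\lambda}}\,m_{s+\ln}\widehat u .
\]
The first term is $\mathcal F(\mu_{s,\lambda}*u)$, and $\mu_{s,\lambda}*u\in L^p$.

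\emph{Step 3.} We must identify $\widehat{w_{s,\lambda}}\,m_{s+\ln}\widehat u$ with $\widehat{w_{s,\lambda}}\,\widehat f$. The equation only holds in $\mathcal S_\infty'$, i.e.\ $m_{s+\ln}\widehat u=\widehat f$ when tested against $\psi\in\mathcal S_0$. So we only get equality of the two sides of the expression for $\mathcal F((\lambda I-\Delta)^{s+\ln}u)$ modulo distributions whose Fourier transform is supported at the origin. Consequently $(\lambda I-\Delta)^{s+\ln}u-\mu_{s,\lambda}*u-w_{s,\lambda}*f=P$ for some polynomial $P$.

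\emph{Step 4.} We show $P=0$. I expect this to be the main obstacle. I would first avoid splitting at all: note that
\[
m_{s+\ln}\widehat u=\widehat{w_{s,\lambda}}^{-1}\bigl(\text{symbol}\cdot\widehat u-\widehat{\mu_{s,\lambda}}\widehat u\bigr)
\]
near $0$ makes sense since $\widehat{w_{s,\lambda}}$ is nonvanishing there. Because $m_{s+\ln}$ is continuous and vanishes at $0$, $m_{s+\ln}\widehat u$ defines a tempered distribution in $\mathcal S'$ as a product with $u\in L^p$ once one checks it at the level of the identity in Step 2. Then $\widehat P$ is supported at $\{0\}$.

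The tempered distribution $(\lambda I-\Delta)^{s+\ln}u-\mu_{s,\lambda}*u-w_{s,\lambda}*f$ has Fourier transform equal to $\widehat{w_{s,\lambda}}(m_{s+\ln}\widehat u-\widehat f)$. Near the origin, $\widehat{w_{s,\lambda}}=(\lambda+4\pi^2|\xi|^2)^s\ln(\lambda+4\pi^2|\xi|^2)\cdot m(\xi)\cdot(m_{s+\ln})^{-1}$, i.e.\ $\widehat{w_{s,\lambda}}\approx 1-\widehat\mu$-type structure. A cleaner route: choose the decomposition so that $\widehat{w_{s,\lambda}}$ vanishes to infinite order at $0$. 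Then any $\widehat P=\sum c_\beta\partial^\beta\delta_0$ multiplied by $\widehat{w_{s,\lambda}}$ vanishes, so the polynomial ambiguity is killed directly, and the left side lies in $L^p$.

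If the decomposition of Proposition~\ref{prop:bridge-measures} does not provide such vanishing, I would instead argue by growth. The left-hand side, $\mu_{s,\lambda}*u$, and $w_{s,\lambda}*f$ are all in $L^p+L^\infty$-type spaces modulo the polynomial. For the left-hand side this holds because $(\lambda I-\Delta)^{s+\ln}u=(\lambda I-\Delta)^{s+\ln}(K\ast\cdot)$ structure, via Theorem~\ref{thm:Lp-log-Bessel} applied to a mollified $u$. A polynomial lying in $L^p$ (or bounded, for $p=\infty$) must then be $0$ for $p<\infty$, and constant for $p=\infty$. The constant case would be handled by testing against a bump in $\mathcal S_0$ and using $m(0)=0$.

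Finally, apply Theorem~\ref{thm:Lp-log-Bessel} or the definition of the norm to conclude $u\in\mathcal L^p_{s+\ln,\lambda}$ with the stated estimate.
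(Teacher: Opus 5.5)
Your overall route (the bridge identity, Young's inequality for finite measures, then the definition of $\mathcal L^p_{s+\ln,\lambda}$ or Theorem~\ref{thm:Lp-log-Bessel}) is the paper's. Your Step~3 is correct and more careful than the paper, which obtains the identity in $\mathcal S_\infty'$ and then restates it in $\mathcal S'$ without comment. The gap is in Step~4: the polynomial $P$ is never actually removed, and part of the plan is wrong.

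\emph{The ``cleaner route''.} It presupposes a decomposition with $\widehat w$ flat at $0$, and Proposition~\ref{prop:bridge-measures} does not supply one. Write $M_\lambda(\xi):=(\lambda+4\pi^2|\xi|^2)^s\ln(\lambda+4\pi^2|\xi|^2)$. In the paper's construction $\widehat{w_{s,\lambda}}=M_\lambda/(m_{s+\ln}+c_0)$, so $\widehat{w_{s,\lambda}}(0)=\lambda^s\ln\lambda/c_0\neq0$.

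\emph{The growth route.} You assert that $(\lambda I-\Delta)^{s+\ln}u$ already lies in an $L^p$-type space, but that is exactly what has to be proved. Theorem~\ref{thm:Lp-log-Bessel} cannot supply it, since its hypothesis already presupposes $(\lambda I-\Delta)^{s+\ln}u\in L^p$.

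\emph{The case $p=\infty$.} This step fails outright. A constant is invisible to all of $\mathcal S_\infty$ (its transform $c\,\delta_0$ annihilates $\mathcal S_0$), so no test in $\mathcal S_0$ can determine it. It is also genuinely nonzero in general. Take $u=0$ and $f\equiv1$: the hypothesis holds because constants vanish in $\mathcal S_\infty'$, yet
\[
\mu_{s,\lambda}*u+w_{s,\lambda}*f\equiv\widehat{w_{s,\lambda}}(0)\neq0=(\lambda I-\Delta)^{s+\ln}u .
\]
So for $p=\infty$ the identity (yours and the paper's) is false for this decomposition, and the constant has to be estimated rather than shown to vanish.

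Both of your ideas can be completed.

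\emph{Completing the growth route.} Mollify instead of invoking Theorem~\ref{thm:Lp-log-Bessel}. Set $D:=(\lambda I-\Delta)^{s+\ln}u-\mu_{s,\lambda}*u-w_{s,\lambda}*f=P$ and fix $0\le\rho\in\mathcal S$ with $\int\rho=1$. Then
\[
\rho*P=\rho*D=\bigl((\lambda I-\Delta)^{s+\ln}\rho\bigr)*u-\mu_{s,\lambda}*(\rho*u)-w_{s,\lambda}*(\rho*f)\in L^p,
\]
because $(\lambda I-\Delta)^{s+\ln}\rho\in\mathcal S$. On the other hand, $\rho*P$ equals $P$ plus terms of lower degree. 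Hence $P=0$ when $p<\infty$. When $p=\infty$, $P\equiv c$ with $|c|\le(\|(\lambda I-\Delta)^{s+\ln}\rho\|_{L^1}+\|\mu_{s,\lambda}\|)\|u\|_{L^\infty}+\|w_{s,\lambda}\|\,\|f\|_{L^\infty}$, which gives the estimate with a larger constant.

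\emph{Completing the cleaner route.} Construct the flat decomposition yourself.
\begin{itemize}
\item Choose $\eta\in C^\infty$ with $\eta=0$ on $\{|\xi|\le R\}$ and $\eta=1$ on $\{|\xi|\ge 2R\}$, where $R>1/(2\pi)$ so that $m_{s+\ln}>0$ on $\operatorname{supp}\eta$.
\item Set $\widehat\mu:=(1-\eta)M_\lambda$ and $\widehat w:=\eta M_\lambda/m_{s+\ln}$.
\item Since $M_\lambda/m_{s+\ln}-1=O(|\xi|^{-2})$ with symbol-type derivatives, Proposition~\ref{erjinzhi} applied to $\eta\,(M_\lambda/m_{s+\ln}-1)$, together with $1-\eta\in C_c^\infty$, gives $w\in\delta_0+L^1$. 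Also $\mu\in\mathcal S$.
\item Both transforms are smooth. This is unlike the paper's $\widehat{w_{s,\lambda}}$, which inherits the non-smoothness of $m_{s+\ln}$ at $0$ and so is not really in $\mathcal O_M$.
\item For every $\psi\in\mathcal S$ one has $\widehat w\psi\in\mathcal S_0$. Testing the equation against $\widehat w\psi$ yields $D=0$ in $\mathcal S'$ for all $1\le p\le\infty$.
\end{itemize}
This gives the estimate with $C=\max\{\|\mu\|,\|w\|\}$, and it does not need Wiener's theorem.
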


We next study the spaces $\mathcal L^{p}_{s+\ln,\lambda}$ themselves and compare them both with the classical Bessel scale and with the logarithmic Bessel potential spaces introduced by Opic and Trebels. A natural route would be to use the Mikhlin multiplier theorem, which is sufficient in the reflexive range $1<p<\infty$, but does not cover the endpoints $p=1$ and $p=\infty$. Our dyadic $L^1$-kernel criterion in Proposition~\ref{erjinzhi} allows us to work uniformly for the full range $1\le p\le\infty$.

\begin{proposition}\label{prop:lambda-equivalence-wiener}
Let $s>0$, $1\le p\le\infty$, and let $1<\lambda_1<\lambda_2$.
Then
\[
\mathcal{L}^{p}_{s+\ln,\lambda_1}(\mathbb R^n)
=
\mathcal{L}^{p}_{s+\ln,\lambda_2}(\mathbb R^n),
\]
and there exists $C=C(n,p,s,\lambda_1,\lambda_2)\ge 1$ such that for all
$u\in\mathcal S(\mathbb R^n)$,
\[
C^{-1}\|u\|_{\mathcal{L}^{p}_{s+\ln,\lambda_2}}
\le
\|u\|_{\mathcal{L}^{p}_{s+\ln,\lambda_1}}
\le
C\|u\|_{\mathcal{L}^{p}_{s+\ln,\lambda_2}}.
\]
\end{proposition}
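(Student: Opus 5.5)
The approach is to compare the two symbols through a single multiplier. Let $\Lambda_j(\xi):=(\lambda_j+4\pi^2|\xi|^2)^s\ln(\lambda_j+4\pi^2|\xi|^2)$ for $j=1,2$, and consider
\[
q(\xi):=\frac{\Lambda_1(\xi)}{\Lambda_2(\xi)} .
\]
If both $q$ and $1/q$ are Fourier transforms of finite measures, then Young's inequality gives the norm equivalence for every $1\le p\le\infty$ at once. I would write
\[
q=\widehat{\delta_0}+\widehat{g}, \qquad 1/q=\widehat{\delta_0}+\widehat{h}, \qquad g,h\in L^1(\mathbb R^n).
\]
To get $g$ and $h$, I would apply Proposition~\ref{erjinzhi} to $f_1:=q-1$ and $f_2:=1/q-1$. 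Then
\[
\|u\|_{\mathcal L^p_{s+\ln,\lambda_1}}
=\|(\delta_0+g)*(\lambda_2 I-\Delta)^{s+\ln}u\|_{L^p}
\le (1+\|g\|_{L^1})\,\|u\|_{\mathcal L^p_{s+\ln,\lambda_2}},
\]
and symmetrically with $h$. This first yields the estimate for $u\in\mathcal S$. The identity $(\lambda_1 I-\Delta)^{s+\ln}u=(\delta_0+g)*(\lambda_2 I-\Delta)^{s+\ln}u$ also holds in $\mathcal S'$ for any tempered $u$, because $q$ is smooth with all derivatives polynomially bounded (it is in $\mathcal O_M$), so multiplying $\widehat u$ by it is legitimate. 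That gives equality of the spaces.

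The key step is checking the hypotheses of Proposition~\ref{erjinzhi}. Since $\lambda_j>1$, the function $\Lambda_j$ is smooth and strictly positive on all of $\mathbb R^n$. So $q$ and $1/q$ are $C^\infty$ and bounded, and the low-frequency condition \eqref{eq:LF_assump} can only fail through $f_i(0)\neq 0$. That is a real problem: $q(0)-1=\lambda_1^s\ln\lambda_1/(\lambda_2^s\ln\lambda_2)-1\neq0$, and $f_i$ does not vanish near the origin. The fix is to subtract a Schwartz correction. Choose $\chi\in C_c^\infty$ with $\chi\equiv1$ near $0$ and set
\[
f_1=\bigl(q-1-(q(0)-1)\chi\bigr)+(q(0)-1)\chi .
\]
The last term is the Fourier transform of a Schwartz function, hence of an $L^1$ function. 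The first term is smooth and vanishes at $0$, so near the origin it is $O(|\xi|)$ and its derivatives are bounded, which gives \eqref{eq:LF_assump} with $\delta_2=1$. Alternatively, one can check directly that any $C^\infty$ function with $\mathcal S$-type symbol decay at infinity is in $\widehat{L^1}$; Proposition~\ref{erjinzhi} just packages this uniformly.

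The main obstacle is the high-frequency bound \eqref{eq:HF_assump}, where $q-1$ decays only logarithmically. Write $t=4\pi^2|\xi|^2$. Then
\[
q-1=\Bigl(\tfrac{\lambda_1+t}{\lambda_2+t}\Bigr)^s\frac{\ln(\lambda_1+t)}{\ln(\lambda_2+t)}-1 .
\]
The power factor equals $1+O(t^{-1})$. The log ratio equals $1+\dfrac{\ln\frac{\lambda_1+t}{\lambda_2+t}}{\ln(\lambda_2+t)}=1+O\bigl(t^{-1}/\ln t\bigr)$. So $q-1=O(t^{-1})=O(|\xi|^{-2})$, and we can take $\delta_1=2$. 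For the derivatives, I would treat $q-1$ as a function $\Phi(t)$ of one variable with $|\Phi^{(k)}(t)|\lesssim t^{-1-k}$ for $t\ge1$. This follows since $\ln(\lambda+t)$ and $(\lambda+t)^s$ are symbols of orders $0^+$ and $s$ whose derivatives gain $t^{-1}$ each, and the difference structure persists under differentiation. The chain rule with $t=4\pi^2|\xi|^2$ then gives $|\partial^\alpha(q-1)|\lesssim|\xi|^{-2-|\alpha|}$. The same argument applies to $1/q-1$ with the roles of $\lambda_1$ and $\lambda_2$ swapped. The careful bookkeeping here, uniformly in $|\alpha|\le N$ with $N>n$, is routine but is where most of the work lies.
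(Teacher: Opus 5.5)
Your proposal is correct and is essentially the paper's proof: you write the ratio $q=\theta=m_{\lambda_1}/m_{\lambda_2}$ and its reciprocal as $\delta_0$ plus an $L^1$ kernel, by subtracting $(q(0)-1)\chi$, applying Proposition~\ref{erjinzhi} with $\delta_1=2$, and adding back the Schwartz piece, and then Young's inequality gives the two-sided estimate. Three small points. Your $\delta_2=1$ works as well as the paper's $\delta_2=2$. Your $\mathcal O_M$ remark actually supplies the equality of the spaces for general tempered $u$, which the paper's Step~4 only checks on $\mathcal S$. Finally, $q-1$ does not decay ``only logarithmically''; as your own computation shows, it is $O(|\xi|^{-2})$.
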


\begin{proposition}\label{prop:lambda-inclusion}
Let $s>0$, $\lambda>1$, and $1\le p\le\infty$. Then
\[
\mathcal{L}^{p}_{s+\ln,\lambda}(\mathbb R^n)
\subsetneq
\mathcal{L}^{p}_{s+\ln,1}(\mathbb R^n).
\]
\end{proposition}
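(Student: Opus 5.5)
The inclusion follows by writing the symbol ratio as $1$ plus the Fourier transform of an $L^1$ kernel; strictness follows from a linear polynomial, which is annihilated by $(I-\Delta)^{s+\ln}$ but not by $(\lambda I-\Delta)^{s+\ln}$.

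Throughout, write $\eta:=4\pi^2|\xi|^2$ and
\[
m_\mu(\xi):=(\mu+\eta)^s\ln(\mu+\eta),\qquad \mu\ge 1.
\]
Since $t\mapsto (\mu+t)^s\ln(\mu+t)$ is real-analytic on $(-1,\infty)$, each $m_\mu$ is a $C^\infty$ radial function of $\xi$. Its derivatives grow at most polynomially, so $m_\mu\in\mathcal O_M(\mathbb R^n)$. Hence $(\mu I-\Delta)^{s+\ln}u:=\mathcal F^{-1}(m_\mu\widehat u)$ is well defined for every $u\in\mathcal S'(\mathbb R^n)$, including $\mu=1$.

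\emph{Inclusion.} Consider the ratio $r:=m_1/m_\lambda$. Because $m_\lambda\ge \lambda^s\ln\lambda>0$, $r$ is $C^\infty$ and bounded on $\mathbb R^n$. Fix a cutoff $\varphi\in C_c^\infty(\mathbb R^n)$ with $\varphi\equiv1$ on $B_1$, and split
\[
r-1=\varphi\,(r-1)+(1-\varphi)(r-1).
\]
\begin{itemize}
\item The first piece lies in $C_c^\infty$, so it is the Fourier transform of a Schwartz function, in particular of an $L^1$ function.
\item The second piece vanishes near the origin, so the low-frequency condition \eqref{eq:LF_assump} of Proposition~\ref{erjinzhi} holds trivially.
\item For large $\eta$ one expands
\[
\Bigl(\tfrac{1+\eta}{\lambda+\eta}\Bigr)^s\frac{\ln(1+\eta)}{\ln(\lambda+\eta)}-1 = O(\eta^{-1}),
\]
with each $\xi$-derivative gaining a factor $|\xi|^{-1}$ (symbol-type estimates). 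This gives \eqref{eq:HF_assump} with $\delta_1=2$.
\end{itemize}
Proposition~\ref{erjinzhi} then yields $g\in L^1(\mathbb R^n)$ with $r=1+\widehat g$. For $u\in\mathcal L^p_{s+\ln,\lambda}$ we have $m_1\widehat u=r\,m_\lambda\widehat u$ in $\mathcal S'$, legitimate since $r\in\mathcal O_M$. Hence
\[
(I-\Delta)^{s+\ln}u=f+g*f,\qquad f:=(\lambda I-\Delta)^{s+\ln}u\in L^p .
\]
Young's inequality gives $\|u\|_{\mathcal L^p_{s+\ln,1}}\le(1+\|g\|_{L^1})\|u\|_{\mathcal L^p_{s+\ln,\lambda}}$, valid for all $1\le p\le\infty$.

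\emph{Strictness.} The key structural difference is that $m_1$ vanishes at the origin, while $m_\lambda(0)=\lambda^s\ln\lambda>0$. Near $\xi=0$ one has $m_1(\xi)=\eta+O(\eta^2)$, so $m_1(0)=0$ and $\nabla m_1(0)=0$. Take $u(x)=x_1\in\mathcal S'$, whose Fourier transform is a constant multiple of $\partial_1\delta$. The product rule for $\mathcal O_M$ functions times distributions gives
\[
m\,\partial_1\delta=m(0)\,\partial_1\delta-\partial_1m(0)\,\delta .
\]
\begin{itemize}
\item For $m=m_1$ both coefficients vanish, so $(I-\Delta)^{s+\ln}x_1=0\in L^p$ for every $p$. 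Thus $x_1\in\mathcal L^p_{s+\ln,1}$.
\item For $m=m_\lambda$, radial symmetry gives $\partial_1m_\lambda(0)=0$, so $(\lambda I-\Delta)^{s+\ln}x_1=\lambda^s\ln\lambda\;x_1$. This is not in $L^p(\mathbb R^n)$ for any $1\le p\le\infty$, so $x_1\notin\mathcal L^p_{s+\ln,\lambda}$.
\end{itemize}
For $p<\infty$ even $u\equiv1$ works; $x_1$ is chosen so that one example covers $p=\infty$ as well.

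\emph{Main obstacle.} The only delicate step is verifying the symbol estimates of $(1-\varphi)(r-1)$ at infinity for all derivatives up to order $N>n$. I would do this by writing $r-1$ as a smooth function of $\eta$ and $\ln(\lambda+\eta)$ and differentiating via the chain rule. Each derivative in $\eta$ costs $\eta^{-1}$, and $\partial_\xi\eta=O(|\xi|)$, so this is routine bookkeeping. I would also justify, if the paper has not already done so, that $m_1\in\mathcal O_M$, so that the $\lambda=1$ space is meaningful on all of $\mathcal S'$.
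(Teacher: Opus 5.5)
Your proof is correct and follows the paper's route. The inclusion comes from writing $m_1/m_\lambda=1+\widehat{g}$ with $g\in L^1$ via Proposition~\ref{erjinzhi} and then applying Young's inequality; strictness comes from a polynomial that $(I-\Delta)^{s+\ln}$ annihilates, because $m_1(0)=0$ and $\nabla m_1(0)=0$ while $m_\lambda(0)=\lambda^s\ln\lambda>0$. The only differences are minor streamlinings: you absorb the low-frequency part of $r-1$ into a $C_c^\infty$ piece, whereas the paper uses $\psi-1+\chi=O(|\xi|^2)$ near $0$, and your single test function $x_1$ covers all $1\le p\le\infty$, whereas the paper uses $u\equiv1$ for $p<\infty$ and $u=|x|^2$ for $p=\infty$.
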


\begin{proposition}\label{prop:Hsln-vs-Hs}
Let $s>0$, $\lambda>1$, and $1\le p\le\infty$.
Then for every $\varepsilon>0$ the following continuous embeddings hold:
\[
\mathcal{L}^{p}_{2s+\varepsilon}(\mathbb R^n)
\hookrightarrow
\mathcal{L}^{p}_{s+\ln,\lambda}(\mathbb R^n)
\hookrightarrow
\mathcal{L}^{p}_{2s}(\mathbb R^n)
\hookrightarrow
L^p(\mathbb R^n).
\]
Moreover, there exists a constant $C=C(n,p,s,\lambda,\varepsilon)\ge1$ such that for all
$u\in\mathcal S(\mathbb R^n)$,
\[
\|u\|_{\mathcal{L}^{p}_{2s}}
\le
C\,\|u\|_{\mathcal{L}^{p}_{s+\ln,\lambda}}
\le
C\,\|u\|_{\mathcal{L}^{p}_{2s+\varepsilon}}.
\]
\end{proposition}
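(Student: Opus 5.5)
The plan is to reduce all three embeddings to $L^1$-kernel (Wiener algebra) multiplier statements, which work uniformly in $1\le p\le\infty$. Write $a_\lambda(\xi):=(\lambda+4\pi^2|\xi|^2)^s\ln(\lambda+4\pi^2|\xi|^2)$ and $b_t(\xi):=(1+4\pi^2|\xi|^2)^{t/2}$. The norms in question are $\|\mathcal F^{-1}(a_\lambda\widehat u)\|_{L^p}$ and $\|\mathcal F^{-1}(b_t\widehat u)\|_{L^p}$. It therefore suffices to show that the quotients
\[
m_1(\xi):=\frac{b_{2s}(\xi)}{a_\lambda(\xi)},\qquad m_2(\xi):=\frac{a_\lambda(\xi)}{b_{2s+\varepsilon}(\xi)}
\]
are Fourier transforms of $L^1$ functions (or of finite measures). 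Then Young's inequality gives
\[
\|u\|_{\mathcal L^p_{2s}}=\|\check m_1*(\lambda I-\Delta)^{s+\ln}u\|_{L^p}\le\|\check m_1\|_{L^1}\|u\|_{\mathcal L^p_{s+\ln,\lambda}},
\]
and similarly for $m_2$. The last embedding $\mathcal L^p_{2s}\hookrightarrow L^p$ is classical, since $G_{2s}\in L^1$; alternatively, $L^p$ control follows directly from Theorem~\ref{thm:Lp-log-Bessel}, because $u=K^\lambda_{s+\ln}*f$ and $K^\lambda_{s+\ln}\in L^1$.

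For the multipliers, note first that since $\lambda>1$ both $a_\lambda$ and $b_t$ are smooth and strictly positive on all of $\mathbb R^n$. Hence $m_1$ and $m_2$ lie in $C^\infty(\mathbb R^n)$, and there is no low-frequency singularity to handle.

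For $m_2$, I will check the hypotheses of Proposition~\ref{erjinzhi}. For $|\xi|\ge1$, symbol calculus gives $|\partial^\alpha m_2(\xi)|\lesssim |\xi|^{-\varepsilon-|\alpha|}\ln(2+|\xi|)\lesssim |\xi|^{-\varepsilon/2-|\alpha|}$. Near the origin $m_2$ is smooth, but \eqref{eq:LF_assump} asks for a gain $|\xi|^{\delta_2}$, which a smooth function with $m_2(0)\neq0$ does not have. So I will split off the constant: $m_2=m_2(0)\chi+(m_2-m_2(0)\chi)$ with a fixed cutoff $\chi\in C_c^\infty$ equal to $1$ near $0$. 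The first piece is $\widehat{\text{Schwartz}}$. The second piece vanishes at $0$ and is smooth, so it is $O(|\xi|)$ there, and each derivative is bounded. Since $|\xi|^{1-|\alpha|}\ge c$ for $|\xi|\le2$ and $|\alpha|\ge1$, \eqref{eq:LF_assump} holds with $\delta_2=1$. Alternatively, one can simply use that an $L^1$ Fourier transform is obtained from a smooth function with symbol-type decay $|\partial^\alpha m|\lesssim|\xi|^{-\delta-|\alpha|}$ at infinity, which is the standard Littlewood–Paley argument underlying Proposition~\ref{erjinzhi}.

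For $m_1$, there is no decay: $m_1(\xi)\sim \bigl(2\ln|\xi|\bigr)^{-1}\to0$ only logarithmically. I will therefore argue differently, via a subordination-type representation. The key formula is
\[
\frac{1}{\ln A}=\int_0^\infty A^{-\beta}\,d\beta ,\qquad A>1 .
\]
With $A=\lambda+4\pi^2|\xi|^2$ this gives
\[
m_1(\xi)=\frac{(1+4\pi^2|\xi|^2)^s}{(\lambda+4\pi^2|\xi|^2)^{s}}\int_0^\infty(\lambda+4\pi^2|\xi|^2)^{-\beta}\,d\beta
=r(\xi)\int_0^\infty \widehat{G^\lambda_\beta}(\xi)\,d\beta .
\]
Since $\|G^\lambda_\beta\|_{L^1}=\lambda^{-\beta}$, the inner integral is the Fourier transform of $\int_0^\infty G^\lambda_\beta\,d\beta$, which has $L^1$ norm $1/\ln\lambda$. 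This is exactly the argument behind Proposition~\ref{lem:log-bessel-kernel}, now with $s=0$.

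The prefactor $r(\xi)=\bigl(\tfrac{1+4\pi^2|\xi|^2}{\lambda+4\pi^2|\xi|^2}\bigr)^s$ can be rewritten as $\bigl(1-\tfrac{\lambda-1}{\lambda+4\pi^2|\xi|^2}\bigr)^s$. Expanding with the binomial series gives a sum of terms $(\lambda-1)^k\widehat{G^\lambda_k}$. Their $L^1$ norms are summable, since $\sum_k \binom{s}{k}(\lambda-1)^k\lambda^{-k}$ converges absolutely because $(\lambda-1)/\lambda<1$. So $r$ is the Fourier transform of a finite measure.

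Hence $m_1$ is the Fourier transform of the convolution of a finite measure with an $L^1$ function, which is an $L^1$ function. This yields the middle embedding with a constant depending on $(n,s,\lambda)$.

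The main obstacle I expect is $m_1$, because its logarithmic-only decay puts it outside Proposition~\ref{erjinzhi} and outside Mikhlin at the endpoints $p=1,\infty$. The integral representation in $\beta$ is what circumvents this. I will also verify that the identities hold for $u\in\mathcal S$, so the norm inequalities are rigorous. The set inclusions then follow by density, or directly in $\mathcal S'$, since the convolution operators involved are convolutions with finite measures and are therefore defined on $L^p$.
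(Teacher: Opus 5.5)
Your proposal is correct. For $m_2$ it coincides with the paper's Step~1: you subtract $m_2(0)\chi$ and apply Proposition~\ref{erjinzhi}; your $\delta_2=1$ versus the paper's $\delta_2=2$ is immaterial.

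The genuine difference is in the hard multiplier $m_1$. The paper splits it additively,
\[
m_1=\frac{1}{\ln(\lambda+4\pi^2|\xi|^2)}+\frac{(1+4\pi^2|\xi|^2)^s-(\lambda+4\pi^2|\xi|^2)^s}{(\lambda+4\pi^2|\xi|^2)^s\ln(\lambda+4\pi^2|\xi|^2)} .
\]
The first piece is treated by the same $\beta$-superposition of the kernels $G^\lambda_\beta$ that you use. The second piece decays like $|\xi|^{-2}(\ln|\xi|)^{-1}$; the paper subtracts its value at $0$ and invokes the dyadic criterion of Proposition~\ref{erjinzhi}. That step needs symbol-type derivative bounds on the error term, which the paper only sketches.

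You instead factor multiplicatively, $m_1=r\cdot\bigl(\ln(\lambda+4\pi^2|\xi|^2)\bigr)^{-1}$. You then note that $r=\bigl(1-\tfrac{\lambda-1}{\lambda+4\pi^2|\xi|^2}\bigr)^s$ is the Fourier transform of the finite measure $\delta_0+\sum_{k\ge1}\binom{s}{k}(-1)^k(\lambda-1)^kG^\lambda_k$. Its total variation is at most $\sum_{k\ge0}\bigl|\binom{s}{k}\bigr|(1-\lambda^{-1})^k<\infty$. This is rigorous for two reasons:
\begin{itemize}
\item the binomial series converges uniformly, since $0<\frac{\lambda-1}{\lambda+4\pi^2|\xi|^2}\le 1-\lambda^{-1}<1$;
\item the Fourier transform commutes with the absolutely convergent $L^1$-series.
\end{itemize}

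Your route buys two things. It needs no derivative estimates at all for $m_1$. It also gives the explicit bound $\|\mathcal F^{-1}m_1\|_{L^1}\le(\ln\lambda)^{-1}\sum_{k\ge0}\bigl|\binom{s}{k}\bigr|(1-\lambda^{-1})^k$. The paper's additive route is more generic in exchange: it only needs the remainder to satisfy polynomial symbol bounds, with no algebraic structure. That is the template the paper reuses in Proposition~\ref{prop:space-equivalence-literature}.

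Your direct $\mathcal S'$ argument for the set inclusions is preferable to density, which fails at $p=\infty$. It uses $m_1,m_2\in\mathcal O_M$ together with Lemma~\ref{lem:Fourier-conv-L1-OM}. The paper only proves the inequalities for $u\in\mathcal S$ and asserts the embeddings.

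Two minor slips:
\begin{itemize}
\item In the paper's normalization, the $L^1$ kernel giving $\mathcal L^p_{2s}\hookrightarrow L^p$ is $G_s$, not $G_{2s}$.
\item Your alternative via Theorem~\ref{thm:Lp-log-Bessel} yields $\mathcal L^p_{s+\ln,\lambda}\hookrightarrow L^p$, not $\mathcal L^p_{2s}\hookrightarrow L^p$. The classical argument is the one that proves the stated last embedding.
\end{itemize}
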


We also identify our scale with the logarithmic Bessel potential spaces from \cite{opic2000bessel}. For $\sigma>0$ and $\alpha\in\mathbb R$, Opic and Trebels define spaces $L^p_{\sigma,\alpha}$ by logarithmic modifications of the inhomogeneous Bessel multiplier. In the case relevant here, namely $\sigma=2s$ and $\alpha=1$, one has
\[
L^p_{2s,1}(\mathbb R^n)
:=
\Bigl\{u\in\mathcal S'(\mathbb R^n):
\mathcal F^{-1}\!\Bigl((1+4\pi^2|\xi|^2)^{s}\bigl(1+\ln(1+4\pi^2|\xi|^2)\bigr)\widehat u\Bigr)
\in L^p(\mathbb R^n)\Bigr\},
\]
equipped with the corresponding $L^p$-norm.

\begin{proposition}\label{prop:space-equivalence-literature}
Let $s>0$, $\lambda>1$, and $1\le p\le\infty$. Then
\[
\mathcal{L}^{p}_{s+\ln,\lambda}(\mathbb R^n) = L^p_{2s,1}(\mathbb R^n),
\]
and their norms are equivalent. More precisely, there exists a constant
$C=C(n,p,s,\lambda)\ge 1$ such that for all $u\in\mathcal S(\mathbb R^n)$,
\[
C^{-1}\|u\|_{L^{p}_{2s,1}}
\le
\|u\|_{\mathcal{L}^{p}_{s+\ln,\lambda}}
\le
C\|u\|_{L^{p}_{2s,1}}.
\]
\end{proposition}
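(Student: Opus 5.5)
The plan is to show that the two symbols differ by a bounded multiplier which, together with its reciprocal, is the Fourier transform of a finite measure. Write
\[
a(\xi):=(\lambda+4\pi^2|\xi|^2)^s\ln(\lambda+4\pi^2|\xi|^2),\qquad
b(\xi):=(1+4\pi^2|\xi|^2)^s\bigl(1+\ln(1+4\pi^2|\xi|^2)\bigr).
\]
Both are smooth and strictly positive on $\mathbb R^n$; positivity of $a$ uses $\lambda>1$. I will prove that
\[
\frac{a}{b}=c_1+\widehat{g_1},\qquad \frac{b}{a}=c_2+\widehat{g_2}
\]
with constants $c_1,c_2>0$ and $g_1,g_2\in L^1(\mathbb R^n)$. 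Then
\[
\mathcal F^{-1}(a\widehat u)=c_1\,\mathcal F^{-1}(b\widehat u)+g_1*\mathcal F^{-1}(b\widehat u),
\]
and Young's inequality gives $\|u\|_{\mathcal L^p_{s+\ln,\lambda}}\le (c_1+\|g_1\|_{L^1})\|u\|_{L^p_{2s,1}}$ for every $1\le p\le\infty$. The reverse bound follows in the same way from $b/a$.

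For the set equality I work at the level of $\mathcal S'$. Since $a/b$ and $b/a$ are smooth with all derivatives bounded, they lie in $\mathcal O_M$. Multiplication by them is therefore legitimate on $\mathcal S'$, and the identity $\mathcal F^{-1}(a\widehat u)=\mathcal F^{-1}((a/b)\,b\widehat u)$ holds for every $u\in\mathcal S'$. Convolving an $L^p$ function with $c\delta+g$ keeps it in $L^p$, so the norm inequalities extend from $\mathcal S$ to all of $\mathcal S'$.

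\textbf{Choice of constants.} As $|\xi|\to\infty$, set $t=4\pi^2|\xi|^2$. Then
\[
\frac{a}{b}=\Bigl(\frac{\lambda+t}{1+t}\Bigr)^s\,\frac{\ln(\lambda+t)}{1+\ln(1+t)}\;\longrightarrow\;1,
\]
so I take $c_1=c_2=1$.

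\textbf{Applying Proposition~\ref{erjinzhi}.} I want to apply it to $f_1:=a/b-1$, and similarly to $b/a-1$. The function $f_1$ is smooth everywhere, including at the origin, so the low-frequency hypothesis \eqref{eq:LF_assump} is trivial; any $\delta_2\in(0,1]$ works because $f_1$ and its derivatives are bounded on $\{|\xi|\le 2\}$. The real content is the high-frequency decay \eqref{eq:HF_assump}, which I check through the decomposition
\[
\frac{a}{b}-1=\Bigl[\Bigl(\frac{\lambda+t}{1+t}\Bigr)^s-1\Bigr]\frac{\ln(\lambda+t)}{1+\ln(1+t)}
+\frac{\ln(\lambda+t)-1-\ln(1+t)}{1+\ln(1+t)}.
\]
The first bracket is $O(t^{-1})$ with symbol-type derivatives. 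The numerator of the second term is
\[
\ln\frac{\lambda+t}{1+t}-1=-1+O(t^{-1}),
\]
so the second term only decays like $1/\ln t$. That is slower than the power decay $|\xi|^{-\delta_1}$ that Proposition~\ref{erjinzhi} requires.

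This obstacle comes from the mismatch between $\ln(\lambda+t)$ and $1+\ln(1+t)$, and it is the main difficulty. The way out is to choose a different, equivalent normalization. Opic--Trebels' $L^p_{2s,1}$ does not depend on the particular slowly varying factor used, and the ratio $(1+\ln(1+t))/\ln(\lambda+t)$ tends to $1$ only logarithmically. So I would not insist on a power rate. Instead I would prove directly that
\[
h(\xi):=\frac{1+\ln(1+4\pi^2|\xi|^2)}{\ln(\lambda+4\pi^2|\xi|^2)}
\]
and its reciprocal are Fourier transforms of finite measures.

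Write $h=1+\varphi$ with $\varphi=(1-\ln\theta)/\ln(\lambda+t)$, where $\theta:=(\lambda+t)/(1+t)$. The function $1/\ln(\lambda+4\pi^2|\xi|^2)$ is a Fourier transform of an $L^1$ function: it equals $\int_0^\infty \widehat{G^\lambda_\alpha}\,d\alpha$, i.e.\ the kernel $K^\lambda_{0+\ln}$ treated as in Proposition~\ref{lem:log-bessel-kernel} with $s=0$, whose $L^1$ norm is $1/\ln\lambda$ by the Tonelli computation. Meanwhile $1-\ln\theta=1-s^{-1}\ln(\ldots)$ is a constant plus a smooth function with power decay, which is a Fourier transform of an $L^1$ function by Proposition~\ref{erjinzhi}. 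Products of such transforms are again of this type (convolution of $L^1$ kernels), so $h\in\widehat{M}$. Combining this with the power factor $((\lambda+t)/(1+t))^s$, which is $1+\widehat{L^1}$ by Proposition~\ref{erjinzhi}, gives $a/b\in\widehat{M}$.

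For $b/a$, I reverse the roles. I use $1/(1+\ln(1+t))=\int_0^\infty e^{-\beta}(1+t)^{-\beta}\,d\beta$, which is a Bessel mixture with total mass $\int_0^\infty e^{-\beta}\,d\beta=1$, and then multiply by $\ln(\lambda+t)$ written as $\ln(1+t)+\ln\theta$. This keeps everything within finite measures.
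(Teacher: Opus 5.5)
Your argument is correct in substance and rests on the same two ingredients as the paper's proof. The first is the positive Bessel-mixture representations $1/\ln(\lambda+t)=\int_0^\infty(\lambda+t)^{-\alpha}\,d\alpha$ and $1/(1+\ln(1+t))=\int_0^\infty e^{-\beta}(1+t)^{-\beta}\,d\beta$, used for the logarithmically decaying parts. The second is Proposition~\ref{erjinzhi}, used for the power-decaying remainders.

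The only real difference is bookkeeping. The paper splits additively, $N/M=1+1/\ln(\lambda+r)+(N-M-(\lambda+r)^s)/M$, and similarly for $M/N$. It then has to justify symbol bounds for the $O(r^{-1})$ error term by an asymptotic expansion. Your factorization $b/a=\theta^{-s}h$, $a/b=\theta^{s}h^{-1}$ needs Proposition~\ref{erjinzhi} only for the explicit functions $\theta^{\pm s}-1$ and $\ln\theta$ of $\theta=1+(\lambda-1)/(1+t)$, whose symbol estimates are immediate. The cost is the trivial fact that $\mathbb C\delta_0+L^1$ is closed under convolution. Your $\mathcal O_M$ argument giving the set equality in $\mathcal S'$ is also more explicit than what the paper writes.

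Three points need fixing.

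\textbf{The low-frequency hypothesis.} With $\alpha=0$, \eqref{eq:LF_assump} forces $f(\xi)\to0$ as $\xi\to0$, so boundedness near the origin is not enough. This is exactly the point of the remark following Proposition~\ref{erjinzhi}. Since $\theta^{\pm s}-1$ and $\ln\theta$ equal $\lambda^{\pm s}-1$ and $\ln\lambda$ at $\xi=0$, you must first subtract $f(0)\chi$, with $\chi\in C_c^\infty$ and $\chi\equiv1$ near $0$, as the paper does. Then use $\mathcal F^{-1}\chi\in\mathcal S\subset L^1$; the conclusion is unchanged.

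\textbf{The ratios are swapped.}
\begin{itemize}
\item $h$ combined with $\theta^{-s}$ gives $b/a$, not $a/b$.
\item Your ``reverse roles'' computation produces $h^{-1}=1+(\ln\theta-1)/(1+\ln(1+t))$, and hence $a/b=\theta^{s}h^{-1}$.
\end{itemize}
Since both $\theta^{s}$ and $\theta^{-s}$ are of the form $1+\widehat{L^1}$, both ratios are still covered.

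\textbf{Two remarks to drop.} The aside that $L^p_{2s,1}$ does not depend on the slowly varying factor is essentially what you are proving, so it should go. The expression $1-\ln\theta=1-s^{-1}\ln(\ldots)$ is garbled, since no $s$ enters $\ln\theta$.
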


\medskip

We now come to the embedding and compactness theory, which is one of the main applications of the preceding analysis. For the classical Bessel scale, the continuous embedding theory is standard; see, for instance, \cite[Theorem~6.2.4]{grafakos2009modern}. Sharp local compact embeddings on bounded domains and global compactness under radial symmetry are discussed in \cite[Theorem~1.1]{bellido2025compact} and \cite[Section~3.4, Theorem~4 and Corollary~2]{sickel2000radial}. For logarithmic Bessel spaces in the sense of \cite{opic2000bessel}, further compactness results can be found in \cite{edmunds2006non,edmunds2005compact}. For convenience, we summarize the relevant classical facts in the following statement:

Let $n\ge1$, $s\in(0,\frac{1}{2})$, $1<p<\infty$, and let $\Omega\subset\mathbb R^n$ be a bounded
Lipschitz domain. Let $\mathcal L^{p}_{2s,\mathrm{rad}}(\mathbb R^n)$ denote the closed subspace of
radially symmetric functions in $\mathcal L^{p}_{2s}(\mathbb R^n)$, and let
$C_0(\mathbb R^n)$ be the space of continuous functions on $\mathbb R^n$ vanishing
at infinity, equipped with the supremum norm. Set
\[
p^*=\frac{np}{n-2sp}\quad\text{if }2sp<n,
\qquad
\mu^*=2s-\frac{n}{p}\quad\text{if }2sp>n.
\]
\begin{itemize}
\item[(i)] \textbf{Subcritical case ($2sp<n$).}
$\mathcal L^{p}_{2s}(\mathbb R^n)\hookrightarrow L^q(\mathbb{R}^n)$ for $p\le q\le p^*$, and
$\mathcal L^{p}_{2s}(\mathbb R^n)\hookrightarrow\hookrightarrow L^q(\Omega)$ for $1\le q<p^*$.
If $n\ge2$, then $\mathcal L^{p}_{2s,\mathrm{rad}}(\mathbb R^n)\hookrightarrow\hookrightarrow L^q(\mathbb R^n)$
for $p<q<p^*$.

\item[(ii)] \textbf{Critical case ($2sp=n$).}
$\mathcal L^{p}_{2s}(\mathbb R^n)\hookrightarrow L^q(\mathbb{R}^n)$ for $p\le q<\infty$ and
$\mathcal L^{p}_{2s}(\mathbb R^n)\hookrightarrow\hookrightarrow L^q(\Omega)$ for every $1\le q<\infty$.
If $n\ge2$, then $\mathcal L^{p}_{2s,\mathrm{rad}}(\mathbb R^n)\hookrightarrow\hookrightarrow L^q(\mathbb R^n)$
for every $p<q<\infty$.

\item[(iii)] \textbf{Supercritical case ($2sp>n$).}
$\mathcal L^{p}_{2s}(\mathbb R^n)\hookrightarrow C_0(\mathbb{R}^n)$ and
$\mathcal L^{p}_{2s}(\mathbb R^n)\hookrightarrow\hookrightarrow C^{0,\mu}(\overline\Omega)$ for every
$0<\mu<\mu^*$. If $n\ge2$, then
$\mathcal L^{p}_{2s,\mathrm{rad}}(\mathbb R^n)\hookrightarrow\hookrightarrow C_0(\mathbb R^n)$, and
$\mathcal L^{p}_{2s,\mathrm{rad}}(\mathbb R^n)\hookrightarrow\hookrightarrow L^q(\mathbb R^n)$ for all
$p<q\le\infty$.
\end{itemize}

By contrast, in our logarithmic scale one gains new endpoint information. As a consequence of Proposition~\ref{prop:space-equivalence-literature}, the critical embeddings for $\mathcal L^{p}_{s+\ln,\lambda}$ follow from the logarithmic spaces of \cite{opic2000bessel}. At the borderline index $2s=\frac{n}{p}$, the classical Bessel space $\mathcal L^p_{2s}(\mathbb R^n)$ does not in general embed into $L^\infty(\mathbb R^n)$ and hence does not yield continuity. The additional logarithmic smoothness encoded in $\mathcal L^{p}_{s+\ln,\lambda}$ pushes the critical case beyond the $L^\infty$ threshold and yields continuity with an explicit logarithmic modulus.

\begin{theorem}\label{cor:critical_embeddings}
Let $n\ge 1,s>0,1< p<\infty$ and $\lambda>1$. If $s=\frac12\bigl(\frac{n}{p}+m\bigr)$ for an integer $m\ge 0$, then there is a
continuous embedding
\[
\mathcal{L}^{p}_{s+\ln,\lambda}(\mathbb R^n)\hookrightarrow C_0^m(\mathbb R^n).
\]
Moreover, for any $u\in\mathcal{L}^{p}_{s+\ln,\lambda}(\mathbb R^n)$ and any multi-index
$\gamma$ with $|\gamma|=m$, the weak derivative $\partial^\gamma u$ belongs to
$C_0(\mathbb R^n)$ and satisfies the logarithmic modulus of continuity
\[
\|\partial^\gamma u(\cdot+h)-\partial^\gamma u(\cdot)\|_{L^\infty}
\le
C\,\|u\|_{\mathcal L^{p}_{s+\ln,\lambda}}\,|\ln|h||^{-\frac1p},
\]
for $|h|$ sufficiently small, where $C=C(n,p,s,\lambda)>0$.
\end{theorem}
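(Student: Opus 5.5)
Via Proposition~\ref{prop:space-equivalence-literature}, $\mathcal L^{p}_{s+\ln,\lambda}(\mathbb R^n)=L^p_{2s,1}(\mathbb R^n)$ with equivalent norms. It therefore suffices to prove the statement for $L^p_{2s,1}$ with $2s=\frac np+m$. I would do this directly, using a kernel estimate, rather than quoting Opic--Trebels as a black box.

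Write $u=\mathcal B*g$ with $g\in L^p$, $\|g\|_p\simeq\|u\|$, where $\widehat{\mathcal B}(\xi)=(1+4\pi^2|\xi|^2)^{-s}(1+\ln(1+4\pi^2|\xi|^2))^{-1}$. Equivalently, $u=K^\lambda_{s+\ln}*f$ with $f=(\lambda I-\Delta)^{s+\ln}u$, as in Theorem~\ref{thm:Lp-log-Bessel}. For $|\gamma|\le m$ we have $\partial^\gamma u=(\partial^\gamma\mathcal B)*g$. By Hölder, the whole theorem then reduces to two kernel facts, with $p'$ the conjugate exponent.
\begin{itemize}
\item[(a)] $\partial^\gamma\mathcal B\in L^{p'}(\mathbb R^n)$ for $|\gamma|\le m$.
\item[(b)] $\|\partial^\gamma\mathcal B(\cdot+h)-\partial^\gamma\mathcal B\|_{L^{p'}}\le C|\ln|h||^{-1/p}$ for $|\gamma|=m$ and $|h|$ small.
\end{itemize}
Membership in $C_0$ follows from (a) by density: for $g\in C_c^\infty$ the convolution is continuous and decays, and the $L^\infty$ bound $\|\partial^\gamma\mathcal B\|_{p'}\|g\|_p$ passes to the limit.

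To obtain (a) and (b), I would establish pointwise bounds near the origin, in the spirit of Proposition~\ref{prop:kernel_asymptotics}(i), together with exponential decay at infinity as in Proposition~\ref{lem:Klog-asymp-infty-final}. Using the representation $\mathcal B$ (or $K^\lambda_{s+\ln}$) $=\int_s^\infty G_\alpha\,d\alpha$ of Proposition~\ref{lem:log-bessel-kernel} and differentiating the heat-kernel formula \eqref{eq:G-lambda-heat} under the integral, I expect the bounds
\[
|\partial^\gamma\mathcal B(x)|\lesssim |x|^{2s-n-|\gamma|}\,\ln^{-1}(1/|x|),\qquad |x|\le\tfrac12,
\]
with an extra factor $|x|^{-1}$ for one more derivative. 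The $\ln^{-1}$ gain comes from integrating $\alpha$ over $[s,\infty)$: the Riesz-type singularity $|x|^{2\alpha-n-|\gamma|}$ integrated in $\alpha$ gives $|x|^{2s-n-|\gamma|}/\ln(1/|x|^2)$.

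For $|\gamma|=m$ we have $(2s-n-m)p'=-n$, so $|\partial^\gamma\mathcal B|^{p'}\sim |x|^{-n}\ln^{-p'}(1/|x|)$, which is integrable near $0$ because $p'>1$. This proves (a), and lower orders are easier.

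For (b), split the integral at $|x|=2|h|$.
\begin{itemize}
\item On $|x|\le2|h|$, bound each of the two terms separately. This gives $\int_{|x|\le3|h|}|x|^{-n}\ln^{-p'}(1/|x|)\,dx\sim\ln^{1-p'}(1/|h|)$, whose $p'$-th root is $|\ln|h||^{-1/p}$.
\item On $2|h|\le|x|\le1$, use the mean value theorem with the derivative bound. This gives $|h|\,|x|^{-n/p'-1}\ln^{-1}(1/|x|)$, and its $L^{p'}$ norm is $\lesssim|h|\cdot|h|^{-1}\ln^{-1}(1/|h|)$, which is better.
\item On $|x|\ge1$, exponential decay and smoothness give $O(|h|)$.
\end{itemize}
Summing the three pieces yields (b).

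The main obstacle is the derivative estimate near the origin. One must show that $\partial^\gamma$ with $|\gamma|\le m+1$ of $\int_s^\infty G_\alpha\,d\alpha$ keeps the logarithmic gain uniformly. The plan is to differentiate $p_t$, which gives $|\partial^\gamma p_t(x)|\lesssim t^{-(n+|\gamma|)/2}e^{-c|x|^2/t}$. Integrating in $t$ then gives $\Gamma(\alpha-\frac{n+|\gamma|}{2})$-type factors times $|x|^{2\alpha-n-|\gamma|}$ for $\alpha$ near $s$, and bounded contributions for large $\alpha$. Integrating in $\alpha$ near $s$ produces the $\ln^{-1}$ factor. The borderline $\alpha$ range, where $2\alpha-n-|\gamma|$ crosses zero, needs care, but it only contributes logarithmic-size terms that are dominated by the main term.
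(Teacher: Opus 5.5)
Your proof is correct, but it takes a different route from the paper's. The paper argues purely by citation. After the identification $\mathcal L^p_{s+\ln,\lambda}=L^p_{2s,1}$ of Proposition~\ref{prop:space-equivalence-literature}, it invokes three results from \cite{opic2000bessel}:
\begin{itemize}
\item Theorem~1.2(a), giving $L^p_{n/p,1}\hookrightarrow C_0$, which applies since $1>1/p'$;
\item Eq.~(1.9), which moves $\partial^\gamma$ from $L^p_{n/p+m,1}$ down to $L^p_{n/p,1}$;
\item the Opic--Trebels modulus $|\ln|h||^{1/p'-\alpha}$, taken with $\alpha=1$.
\end{itemize}

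You instead reprove this special case by hand. H\"older reduces everything to $\partial^\gamma K\in L^{p'}$ together with an $L^{p'}$-modulus of continuity for the kernel. Both follow from the near-origin bound $|\partial^\gamma K(x)|\lesssim |x|^{-n/p'}\ln^{-1}(1/|x|)$ for $|\gamma|=m$. In this form the exponent $-\frac1p$ appears transparently as the $p'$-th root of $\int_0^{3|h|}\rho^{-1}\ln^{-p'}(1/\rho)\,d\rho$.

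If you work directly with $K=K^\lambda_{s+\ln}$, writing $u=K^\lambda_{s+\ln}*f$ with $\|f\|_{L^p}=\|u\|_{\mathcal L^p_{s+\ln,\lambda}}$ by Theorem~\ref{thm:Lp-log-Bessel}, you need neither Opic--Trebels nor Proposition~\ref{prop:space-equivalence-literature}. The argument is then self-contained within the paper. It also shows explicitly that the logarithmic gain in the kernel is what pushes the critical case into $C_0$.

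The paper's route is shorter and avoids all kernel-derivative analysis. Yours requires two kernel estimates the paper never states.
\begin{itemize}
\item Derivative bounds near the origin for $|\gamma|=m,m+1$. These go through as in the proof of Proposition~\ref{prop:kernel_asymptotics}(i): use $|\partial^\gamma p_t(x)|\lesssim t^{-(n+|\gamma|)/2}e^{-|x|^2/(8t)}$, a short window $\alpha\in[s,s+\delta]$ for the main term, and Lemma~\ref{lem:Hs-large-time} for the remainder.
\item Exponential upper bounds for derivatives at infinity. These come from the crude bound $H_s(t)\lesssim e^t(1+t)^M$, not from Proposition~\ref{lem:Klog-asymp-infty-final}, which treats $K$ alone.
\end{itemize}

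Two small points need tidying.
\begin{itemize}
\item Your displayed bound $|x|^{2s-n-|\gamma|}\ln^{-1}(1/|x|)$ holds only when $2s-n-|\gamma|<0$. For lower orders with $2s-n-|\gamma|\ge 0$, the derivative is bounded or has a $\ln\ln$ blow-up, as in Proposition~\ref{prop:kernel_asymptotics}(ii)--(iii). This is still in $L^{p'}_{\mathrm{loc}}$, so (a) survives.
\item You should justify that the distributional $\partial^\gamma K$ coincides with the pointwise derivative. This follows from Fubini in the double integral $\int_s^\infty\!\int_0^\infty$, using $\|\partial^\gamma p_t\|_{L^1}\lesssim t^{-|\gamma|/2}$, $2s-|\gamma|\ge n/p>0$ and $\lambda>1$.
\end{itemize}
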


By Remark~\ref{rmk:blowup-sequence-p1}, the endpoint case $p=1$ cannot be included in Theorem~\ref{cor:critical_embeddings}. Indeed, when $2s=\frac{n}{p}=n$, the convolution operator
$f\mapsto K_{\frac n2+\ln}^{\lambda}*f$ is not bounded from $L^1(\mathbb R^n)$ to $L^\infty(\mathbb R^n)$, so no embedding into $C_0(\mathbb R^n)$, or even into $L^\infty(\mathbb R^n)$, can hold. In this case one can only expect the same type of $L^q$ conclusions as in the classical Bessel scale, namely embeddings of the form
\[
\mathcal L^{1}_{s+\ln,\lambda}(\mathbb R^n)\hookrightarrow L^q(\mathbb R^n),
\qquad 1\le q<\infty.
\]

We next address compactness. Beyond the critical continuous embeddings above, the strong endpoint integrability of the logarithmic Bessel kernel leads to genuinely compact embeddings, both locally on bounded domains and globally under radial symmetry. This is one of the main topological consequences of the logarithmic correction: it restores compactness at borderline exponents relevant to nonlinear and variational problems.

Let $\mathcal{L}^{p}_{s+\ln,\lambda,\mathrm{rad}}(\mathbb R^n)$ denote the closed subspace
of radially symmetric functions in $\mathcal{L}^{p}_{s+\ln,\lambda}(\mathbb R^n)$.

\begin{theorem}\label{thm:compact_critical}
Let $n\ge 1,s>0,1<p<\infty,$ and $\lambda>1.$
\begin{itemize}
\item[(i)] \textbf{Local compactness.}
Let $\Omega\subset\mathbb R^n$ be a bounded
Lipschitz domain and assume that $n=2sp.$
Then the restriction to $\Omega$ yields the compact embedding
\[
\mathcal{L}^{p}_{s+\ln,\lambda}(\mathbb R^n)\hookrightarrow\hookrightarrow C(\overline{\Omega}).
\]
Consequently, the following embeddings are compact for all $1\le q\le\infty$:
\[
\mathcal{L}^{p}_{s+\ln,\lambda}(\mathbb R^n)\hookrightarrow\hookrightarrow L^q(\Omega).
\]

\item[(ii)] \textbf{Global radial compactness.}
Assume $n\ge2$ and $n=2sp$. Then the embedding into the space of continuous
functions vanishing at infinity is compact:
\[
\mathcal{L}^{p}_{s+\ln,\lambda,\mathrm{rad}}(\mathbb R^n)\hookrightarrow\hookrightarrow C_0(\mathbb R^n).
\]
Consequently, the following embeddings are compact for all $p<q\le\infty$:
\[
\mathcal{L}^{p}_{s+\ln,\lambda,\mathrm{rad}}(\mathbb R^n)\hookrightarrow\hookrightarrow L^q(\mathbb R^n).
\]
\end{itemize}
\end{theorem}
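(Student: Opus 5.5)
The plan is to combine the critical continuous embedding of Theorem~\ref{cor:critical_embeddings} (case $m=0$) with Arzel\`a--Ascoli, plus a radial decay estimate for part (ii). Throughout, write $u=K^\lambda_{s+\ln}*f$ with $f=(\lambda I-\Delta)^{s+\ln}u\in L^p$. By Theorem~\ref{thm:Lp-log-Bessel}, $\|f\|_{L^p}=\|u\|_{\mathcal L^p_{s+\ln,\lambda}}$.

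\textbf{Part (i).} Take a bounded sequence $(u_k)$ in $\mathcal L^p_{s+\ln,\lambda}$, with $\|u_k\|\le M$.
\begin{enumerate}
\item Since $n=2sp$, Theorem~\ref{cor:critical_embeddings} with $m=0$ gives $\|u_k\|_{L^\infty}\le CM$.
\item The same theorem gives the uniform logarithmic modulus of continuity $\|u_k(\cdot+h)-u_k\|_{L^\infty}\le CM|\ln|h||^{-1/p}$ for small $|h|$. So the family is equicontinuous on $\overline\Omega$.
\item Arzel\`a--Ascoli then yields a subsequence converging uniformly on the compact set $\overline\Omega$. This proves compactness into $C(\overline\Omega)$.
\item Since $|\Omega|<\infty$, $C(\overline\Omega)\hookrightarrow L^q(\Omega)$ continuously for every $1\le q\le\infty$. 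Composing gives the compact embeddings into $L^q(\Omega)$.
\end{enumerate}
The Lipschitz assumption on $\Omega$ is not really needed here; only boundedness is used.

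\textbf{Part (ii), uniform decay.} Here I need, in addition, a uniform decay estimate at infinity for radial functions: for every $\varepsilon>0$ there should be $R$ with $\sup_k\sup_{|x|\ge R}|u_k(x)|\le\varepsilon$.
\begin{itemize}
\item First try the Strauss route. By Proposition~\ref{prop:Hsln-vs-Hs}, $\mathcal L^p_{s+\ln,\lambda}\hookrightarrow\mathcal L^p_{2s}$. Because $2s=n/p$, Strauss-type radial lemmas for Bessel potential spaces (cf.\ \cite{sickel2000radial}) give $|u(x)|\le C|x|^{-\delta}\|u\|_{\mathcal L^p_{2s}}$ for $|x|\ge1$, with some $\delta>0$ (for instance $\delta=(n-1)/p$ after reducing smoothness slightly below the critical line). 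This requires $n\ge2$. If citing this is not clean, I would choose a smoothness level $t<2s$ with $t>1/p$ and use the Sickel--Skrzypczak decay estimate for radial functions in $\mathcal L^p_t$. Note that $\mathcal L^p_{2s}\hookrightarrow\mathcal L^p_t$.
\item Alternatively, argue directly through the kernel. Split $K^\lambda_{s+\ln}=K\chi_{B_\rho}+K\chi_{B_\rho^c}$.
 \begin{itemize}
 \item The near part $K\chi_{B_\rho}$ has small $L^{p'}$-norm as $\rho\to0$. This uses the asymptotics of Proposition~\ref{prop:kernel_asymptotics}(i): $|x|^{-n/p'}/\ln(1/|x|)$ is $L^{p'}$-integrable near $0$ exactly because $p'>1$.
 \item The far part is bounded and exponentially decaying by Proposition~\ref{lem:Klog-asymp-infty-final}. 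Convolving it with a radial $f$ that is bounded in $L^p$ tends to zero at infinity uniformly in $k$. The reason is that radial $L^p$ mass near $|x|=r$ spreads over a sphere of area $\sim r^{n-1}$, so its local $L^1$ mass on unit balls decays like $r^{-(n-1)/p}$.
 \end{itemize}
\end{itemize}
I expect this uniform decay step to be the main obstacle. The delicate point is making the "radial mass spreads over a sphere" estimate uniform for the near-kernel piece.

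\textbf{Part (ii), conclusion.} Combine the uniform decay with part (i) on the balls $B_R$. Compactness on each $B_R$, a diagonal argument, and the uniform smallness outside $B_R$ give a subsequence that is Cauchy in $C_0(\mathbb R^n)$. For $p<q\le\infty$, interpolate:
\[
\|v\|_{L^q}\le\|v\|_{L^p}^{p/q}\|v\|_{L^\infty}^{1-p/q}.
\]
The sequence is bounded in $L^p$ by Proposition~\ref{prop:Hsln-vs-Hs} and converges in $L^\infty$. Hence it converges in $L^q$, which yields the compact embeddings into $L^q(\mathbb R^n)$.
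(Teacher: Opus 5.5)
Your part (i), and the assembly of part (ii), match the paper's proof. In part (ii) that assembly is local compactness on balls, a diagonal extraction, uniform smallness outside $B_R$, and $L^p$--$L^\infty$ interpolation. The only real difference is how you obtain uniform decay at infinity.

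The paper gets decay from ingredients already in hand. Suppose $|u_{k_j}(r_je)|\ge\varepsilon$ with $r_j\to\infty$. The uniform logarithmic modulus of continuity from Theorem~\ref{cor:critical_embeddings} gives a width $\delta$, independent of $j$, such that by radiality $|u_{k_j}|\ge\varepsilon/2$ on the whole shell $\{r_j\le|x|\le r_j+\delta\}$. This shell has measure of order $r_j^{\,n-1}\delta\to\infty$ (this is where $n\ge2$ enters), contradicting the uniform $L^p$ bound.

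Your first route is $\mathcal L^p_{s+\ln,\lambda}\hookrightarrow\mathcal L^p_{2s}\hookrightarrow\mathcal L^p_t$ with $1/p<t<2s$, admissible since $2s=n/p\ge 2/p$, followed by the Sickel--Skrzypczak extended Strauss lemma. It is valid and gives an explicit rate $|x|^{-(n-1)/p}$. It also makes clear that the decay comes from the classical radial Bessel scale rather than from the logarithmic gain. The price is reliance on an external theorem. The paper's contradiction argument is elementary and self-contained but gives no useful rate.

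Your kernel-splitting route also works, and the obstacle you anticipate there is not real. The near piece needs no radial input, since $\|(K\chi_{B_\rho})*f\|_{L^\infty}\le\|K\chi_{B_\rho}\|_{L^{p'}}\|f\|_{L^p}$ is small uniformly in $k$. The radial spreading is needed only for the far piece. After discarding its exponentially small tail (again small in $L^{p'}$), it reduces to $\int_{B_L(x)}|f|\lesssim_L |x|^{-(n-1)/p}\|f\|_{L^p}$ for radial $f$ and $|x|\ge 2L$.
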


The proof combines the critical continuity theorem above with the identification of $\mathcal L^{p}_{s+\ln,\lambda}$ and the logarithmic Bessel potential spaces of \cite{opic2000bessel}, which yields uniform equicontinuity. We emphasize that this theorem holds for all $s>0$ under the critical relation $s=\frac{n}{2p}$; in particular, no restriction to fractional orders $s\in(0,\frac12)$ is needed.

Finally, in the classical Sobolev and Bessel scales it is well known that the embedding into the critical space $L^{p^*}$ is not compact in the subcritical regime $n>2sp$. In contrast, the logarithmic modification weakens the kernel singularity exactly at the endpoint and places the critical convolution estimate in a strong Young-type framework. This gain is sufficient to recover compactness at the exponent $p^*$, both locally on bounded domains and globally under radial symmetry.

\begin{theorem}\label{thm:compact_subcritical}
Let $n\ge 1$, $\lambda>1$, $1<p<\infty$, and assume the subcritical regime
$n>2sp$. Set $p^*=\frac{np}{n-2sp}$.
\begin{itemize}
\item[(i)] \textbf{Local compactness.}
For any bounded
Lipschitz domain $\Omega\subset\mathbb R^n$, the restriction to $\Omega$ yields the
compact embedding
\[
\mathcal{L}^{p}_{s+\ln,\lambda}(\mathbb R^n)\hookrightarrow\hookrightarrow L^{p^*}(\Omega).
\]
Consequently, the following embeddings are compact for all $1\le q\le p^*$:
\[
\mathcal{L}^{p}_{s+\ln,\lambda}(\mathbb R^n)\hookrightarrow\hookrightarrow L^{q}(\Omega).
\]

\item[(ii)] \textbf{Global radial compactness.}
Assume $n\ge2$ and let $\mathcal{L}^{p}_{s+\ln,\lambda,\mathrm{rad}}(\mathbb R^n)$ denote the closed
subspace of radially symmetric functions. Then we have the global compact embedding
\[
\mathcal{L}^{p}_{s+\ln,\lambda,\mathrm{rad}}(\mathbb R^n)\hookrightarrow\hookrightarrow L^{p^*}(\mathbb R^n).
\]
Consequently, the following embeddings are compact for all $p<q\le p^*$:
\[
\mathcal{L}^{p}_{s+\ln,\lambda,\mathrm{rad}}(\mathbb R^n)\hookrightarrow\hookrightarrow L^{q}(\mathbb R^n).
\]
\end{itemize}
\end{theorem}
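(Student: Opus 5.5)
By Theorem~\ref{thm:Lp-log-Bessel}, every $u\in\mathcal{L}^{p}_{s+\ln,\lambda}(\mathbb R^n)$ can be written as $u=K^{\lambda}_{s+\ln}*f$ with $f=(\lambda I-\Delta)^{s+\ln}u\in L^p$ and $\|f\|_{L^p}=\|u\|_{\mathcal L^p_{s+\ln,\lambda}}$. So it suffices to show that the convolution operator $T f:=K*f$, with $K:=K^\lambda_{s+\ln}$, is compact from $L^p(\mathbb R^n)$ into $L^{p^*}(\Omega)$, and, for radial $f$, into $L^{p^*}(\mathbb R^n)$.

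The key input is the endpoint integrability noted after Proposition~\ref{lem:Klog-asymp-infty-final}. Let $r:=\frac{n}{n-2s}$, so that $\frac1p+\frac1r=1+\frac1{p^*}$. Since $n>2sp\ge 2s$, we have $K\in L^r(\mathbb R^n)$; indeed, by Proposition~\ref{prop:kernel_asymptotics}(i),
\[
|K(x)|^r\sim |x|^{-n}\bigl(\ln\tfrac1{|x|^2}\bigr)^{-r}
\]
near $0$ with $r>1$, and the tail is exponentially small. Young's inequality then gives $\|K*f\|_{L^{p^*}}\le\|K\|_{L^r}\|f\|_{L^p}$. This is a genuine strong-type estimate, in contrast with the classical case where only Hardy--Littlewood--Sobolev is available. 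The point is that a strong Young estimate lets us approximate $K$ in $L^r$ by nicer kernels.

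\textbf{Local compactness.} Fix $\varepsilon>0$ and choose $K_\varepsilon\in C_c^\infty(\mathbb R^n)$ with $\|K-K_\varepsilon\|_{L^r}<\varepsilon$, which is possible since $r<\infty$. Young's inequality gives
\[
\|T-T_\varepsilon\|_{L^p\to L^{p^*}}\le\varepsilon,\qquad T_\varepsilon f:=K_\varepsilon*f.
\]
For $f$ in the unit ball of $L^p$, the family $\{T_\varepsilon f\}$ is uniformly bounded and equicontinuous on $\overline\Omega$, with $\|\nabla K_\varepsilon\|_{L^{p'}}$ controlling the modulus. By Arzel\`a--Ascoli, $T_\varepsilon$ is therefore compact into $C(\overline\Omega)$, and hence into $L^{p^*}(\Omega)$ since $\Omega$ is bounded. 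Then $T$ is a norm limit of compact operators and is compact. The embeddings into $L^q(\Omega)$ for $1\le q\le p^*$ follow from Hölder on the bounded set $\Omega$. No Lipschitz regularity of $\Omega$ is really needed.

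\textbf{Global radial compactness.} Take a bounded sequence of radial $f_k\in L^p$; we may assume $f_k\rightharpoonup f$ weakly in $L^p$ (weak-$*$ is fine since $1<p<\infty$). Set $u_k=T f_k$. By the local part, $u_k\to Tf$ in $L^{p^*}(B_R)$ for each $R$. It remains to show tail smallness: $\sup_k\|u_k\|_{L^{p^*}(|x|>R)}\to0$ as $R\to\infty$, which is where radial symmetry enters.
\begin{itemize}
\item First apply the same approximation: replace $K$ by $K_\varepsilon$ (still radial after radial mollification and truncation), at a cost of $\varepsilon$ uniformly in $k$.
\item Then $v_k=K_\varepsilon*f_k$ is radial and bounded in $\mathcal L^p_{\sigma}$ for every $\sigma$.
\item By the classical radial lemma (Strauss type, cf.\ \cite{sickel2000radial}), $|v_k(x)|\le C_\varepsilon|x|^{-(n-1)/p}\|f_k\|_{L^p}$ for $|x|\ge1$. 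Since $v_k$ is also bounded in $L^p$ and $p^*>p$, this gives
\[
\int_{|x|>R}|v_k|^{p^*}\le \sup_{|x|>R}|v_k|^{p^*-p}\,\|v_k\|_{L^p}^p\to0
\]
uniformly in $k$ as $R\to\infty$, where we use $n\ge2$.
\end{itemize}
Combining the two steps gives $u_k\to Tf$ in $L^{p^*}(\mathbb R^n)$. For $p<q<p^*$, interpolate between the $L^p$ bound and $L^{p^*}$ convergence.

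The main obstacle I expect is the uniform tail estimate in the radial case at the endpoint $q=p^*$. It is exactly the $\varepsilon$-approximation in $L^r$ that makes it work, reducing to a smoothing kernel where strict inequality $q>p$ suffices. The remaining checks are that radial approximants exist (average $K_\varepsilon$ over rotations) and that the Strauss decay exponent is strictly positive, which requires $n\ge2$.
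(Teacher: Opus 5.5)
Your proof is correct and follows essentially the paper's route: you split $K$ into a piece with small $L^p\to L^{p^*}$ operator norm plus a regular piece handled by Arzel\`a--Ascoli, and in the radial case you control the $L^{p^*}$-tail of the regular piece by the Strauss lemma, exactly as in Step~2 of the paper's part~(ii). The differences are minor. In part~(i) you use $L^r$-density of $C_c^\infty$ kernels with Young's inequality, whereas the paper uses the pointwise bound $K\chi_{\{|x|<\epsilon\}}\le C|x|^{2s-n}/|\ln\epsilon|$ together with HLS. Your smooth approximants also make the equicontinuity and $W^{1,p}$ bounds literally valid, unlike the paper's sharp cutoff $K\chi_{\{|x|\ge\epsilon\}}$, whose distributional gradient carries a surface term on $\{|x|=\epsilon\}$.
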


The paper is organized as follows. In Section~2, we collect the preliminary material used throughout the paper, including the moment-vanishing and flat-at-zero Schwartz subspaces, basic facts on convolutions of measures with distributions, and a brief review of classical Bessel potentials and Bessel spaces. Section~3 is devoted to logarithmic potential theory: we introduce the logarithmic Bessel kernel and the corresponding logarithmic Bessel spaces, derive explicit representations, and establish sharp asymptotics both at the origin and at infinity; we then define the logarithmic Riesz potential and prove the measure-level bridge linking the homogeneous and inhomogeneous symbols. In Section~4, we develop the $L^p$ regularity theory for global solutions, study the associated logarithmic Bessel spaces (including their dependence on the parameter $\lambda$ and their equivalence with the logarithmic spaces from the literature), and finally establish the embedding and compactness theory, with particular emphasis on the critical and subcritical compactness results and on global radial compactness.
	
		\section{Preliminaries}
	Throughout this paper, we work on $\R^n$ with $n\ge1$, set $\mathbb{N}_0 = \{0, 1, 2, \dots\}$ and $\mathbb{N} = \{ 1, 2, \dots\}$. For any dimension $n \in \mathbb{N}$, the set of multi-indices is denoted by
\[
\mathbb{N}_0^n = \mathbb{N}_0 \times \mathbb{N}_0 \times \dots \times \mathbb{N}_0.
\]
An element $\gamma \in \mathbb{N}_0^n$ is an $n$-tuple of non-negative integers $\gamma = (\gamma_1, \dots, \gamma_n)$. 

We write $\cS(\R^n)$ for the Schwartz space, $\cS'(\R^n)$ for the space of tempered distributions, see \cite[Section 2.2]{grafakos2008classical}, and we use the Fourier transform
\[
\widehat{f}(\xi)
:=\cF f(\xi)
:=\int_{\R^n}f(x)e^{-2\pi i x\cdot\xi}\,dx,
\quad \xi\in\R^n,
\]
with inverse $\cF^{-1}$ defined by
\[
\widehat{f}^{-1}(x)
:=\cF^{-1} \widehat{f}(x)
:=\int_{\R^n}f(\xi)e^{2\pi i x\cdot\xi}\,d\xi,
\quad x\in\R^n.
\]


\subsection{Moment Vanishing and Flat-at-Zero Schwartz Subspaces}\label{moment}

We introduce two closed subspaces of the Schwartz space that will be used to
quotient out polynomial ambiguities and to localize away from the origin in the
Fourier variable, see \cite[Section 2.3.4]{grafakos2008classical} and \cite[Section 5.1.2]{Triebel1983Theory}.

Define
\[
\cS_\infty(\R^n)
:=\Bigl\{\varphi\in\cS(\R^n): \int_{\R^n} x^\gamma \varphi(x)\,dx=0,\ \forall\,\gamma\in\N_0^n\Bigr\}.
\]
Equivalently, $\cS_\infty$ is the intersection of the kernels of the continuous
linear functionals $\varphi\mapsto \int_{\R^n}x^\gamma\varphi(x)\,dx$, hence it is a
closed subspace of $\cS(\R^n)$ endowed with the induced subspace topology.
Its continuous dual is denoted by $\cS_\infty'(\R^n)$.

Define
\[
\cS_0(\R^n)
:=\Bigl\{\psi\in\cS(\R^n): \partial^\gamma\psi(0)=0,\ \forall\,\gamma\in\N_0^n\Bigr\}.
\]
Again, $\cS_0$ is closed in $\cS$ as an intersection of kernels of continuous
linear maps $\psi\mapsto \partial^\gamma\psi(0)$ and we write $\cS_0'(\R^n)$ for its
continuous dual space.

For $\varphi\in\cS(\R^n)$ and $\gamma\in\N_0^n$, differentiation under the integral sign gives
\[\partial^\gamma \widehat{\varphi}(\xi)
=\int_{\R^n}(-2\pi i x)^\gamma \varphi(x)\,e^{-2\pi i x\cdot\xi}\,dx,
\qquad \xi\in\R^n,\]
and in particular
\[\partial^\gamma \widehat{\varphi}(0)=(-2\pi i)^{|\gamma|}\int_{\R^n}x^\gamma \varphi(x)\,dx.\]
Hence
\[\varphi\in\cS_\infty(\R^n)
\iff
\widehat{\varphi}\in\cS_0(\R^n),\]
so that the Fourier transform restricts to a  linear homeomorphism
\[
\cF:\cS_\infty(\R^n)\xrightarrow{}\cS_0(\R^n),
\qquad
\cF^{-1}:\cS_0(\R^n)\xrightarrow{}\cS_\infty(\R^n).
\]

Using the above identification, we define the Fourier transform on $\cS_\infty'$
and $\cS_0'$ by duality. For $u\in\cS_\infty'(\R^n)$ we set $\widehat{u}\in\cS_0'(\R^n)$ via
\[\langle \widehat{u},\psi\rangle := \langle u,\cF^{-1}\psi\rangle,
\qquad \forall\,\psi\in\cS_0(\R^n),\]
and for $v\in\cS_0'(\R^n)$ we define $\cF^{-1}v\in\cS_\infty'(\R^n)$ by
\[\langle \cF^{-1}v,\varphi\rangle := \langle v,\cF\varphi\rangle,
\qquad \forall\,\varphi\in\cS_\infty(\R^n).\]
Then $\cF:\cS_\infty'(\R^n)\to\cS_0'(\R^n)$ is also a linear homeomorphism with inverse $\cF^{-1}$.

Let $\cP$ be the vector space of all polynomials on $\R^n$, viewed as tempered
distributions, and let
\[
\mathcal{E}
:=\mathrm{span}\{\partial^\gamma\delta_0:\gamma\in\N_0^n\}
\subset \cS'(\R^n)
\]
be the space of tempered distributions supported at $\{0\}$.
The annihilators of $\cS_\infty$ and $\cS_0$ in $\cS'$ are precisely $\cP$ and $\mathcal{E}$,
respectively, and therefore there are canonical identifications, see \cite[Proposition 2.3.25]{grafakos2008classical}
\[
\cS_\infty'(\R^n)\cong \cS'(\R^n)/\cP,
\qquad
\cS_0'(\R^n)\cong \cS'(\R^n)/\mathcal{E},
\]
where the quotient map is given by restriction to $\cS_\infty$ or $\cS_0$.
Moreover, the Fourier transform exchanges $\cP$ and $\mathcal{E}$.
Indeed, for each multi-index $\gamma$,
\[\cF(x^\gamma)=\Bigl(\frac{i}{2\pi}\Bigr)^{|\gamma|}\partial^\gamma\delta_0,
\qquad
\cF^{-1}(\partial^\gamma\delta_0)=( -2\pi i)^{|\gamma|}x^\gamma,\]
so $\cF(\cP)=\mathcal{E}$ and $\cF^{-1}(\mathcal{E})=\cP$. Consequently, $\cF$ induces an isomorphism of
quotient spaces
\[
\cS'(\R^n)/\cP \ \cong\ \cS'(\R^n)/\mathcal{E},
\]
and hence one may freely identify $\cS_\infty'(\R^n)$ and $\cS_0'(\R^n)$ with the
same quotient of $\cS'(\R^n)$, up to this canonical Fourier correspondence.


    \subsection{Convolution of Finite Borel Measures with Distributions}

Any finite Borel measure $\mu$ on $\R^n$ defines a tempered distribution (still denoted by $\mu$) via
\[
\langle \mu,f\rangle := \int_{\R^n} f(x)\,d\mu(x),\qquad f\in\cS(\R^n).
\]
Indeed, if $f_k\to f$ in $\cS(\R^n)$, then in particular $\|f_k-f\|_{L^\infty}\to0$, and hence
\[
|\langle \mu,f_k-f\rangle|
\le |\mu|(\R^n)\|f_k-f\|_{L^\infty}\longrightarrow 0,
\]
where $|\mu|(\R^n)$ is the total variation of $\mu$.
Thus $\mu\in\cS'(\R^n)$. Note that a finite measure need not have compact support. We denote by $\mathcal O_M(\R^n)$ the space of \emph{Schwartz multipliers}, i.e.
\begin{equation}\label{omom}
    \mathcal O_M(\R^n)
:=\Bigl\{m\in C^\infty(\R^n):\ \forall \alpha\in\N_0^n, \exists\,C_\alpha>0,\ N_\alpha\in\mathbb N_0
\text{ such that }|\partial^\alpha m(\xi)|\le C_\alpha(1+|\xi|)^{N_\alpha}\Bigr\}.
\end{equation}
A basic fact (see, e.g., \cite[Section~2.3.2]{grafakos2008classical}) is that
$m\in\mathcal O_M$ if and only if the multiplication map
\[
M_m:\cS(\R^n)\to\cS(\R^n),\qquad \varphi\mapsto m\varphi,
\]
is continuous. Consequently, for $m\in\mathcal O_M$ and $T\in\cS'(\R^n)$ the product
$mT\in\cS'(\R^n)$ is well-defined by
\[
\langle mT,\varphi\rangle:=\langle T,m\varphi\rangle,\qquad \forall\,\varphi\in\cS(\R^n),
\]
and the map $T\mapsto mT$ is continuous on $\cS'(\R^n)$.

\medskip
Following \cite[Section~2.3.3]{grafakos2008classical}, for $t\in\R^n$ we denote by $\tau_t$ the translation
operator on $\cS(\R^n)$,
\[
(\tau_t f)(x):=f(x-t),\qquad x\in\R^n,
\]
and for $u\in\cS'(\R^n)$ we define its translation by duality,
\[
\langle \tau_t u,f\rangle := \langle u,\tau_{-t}f\rangle,\qquad f\in\cS(\R^n).
\]

Let $\mu$ be a finite Borel measure on $\R^n$ and assume that
\begin{equation}\label{eq:mu-OM}
\widehat{\mu}\in\mathcal O_M(\R^n),
\qquad
\widehat{\mu}(\xi):=\int_{\R^n}e^{-2\pi i x\cdot\xi}\,d\mu(x).
\end{equation}
For instance, \eqref{eq:mu-OM} holds if $\mu$ is compactly supported, or more generally if for every $k\in\N$
$$\int_{\R^n}|x|^k\,d|\mu|(x)<\infty.$$

Under \eqref{eq:mu-OM}, multiplication by $\widehat\mu$ is continuous on $\cS(\R^n)$, hence it restricts to a
continuous map $\cS_0(\R^n)\to\cS_0(\R^n)$. Indeed, if $\psi\in\cS_0$, then $\widehat\mu\,\psi\in\cS$ and for every
multi-index $\alpha$,
\[
\partial^\alpha(\widehat\mu\,\psi)(0)
=\sum_{\beta\le\alpha}\binom{\alpha}{\beta}\,(\partial^\beta\widehat\mu)(0)\,(\partial^{\alpha-\beta}\psi)(0)=0,
\]
since $\partial^{\gamma}\psi(0)=0$ for all $\gamma$. Thus $\widehat\mu\,\psi\in\cS_0$. Thus, the map $T\mapsto \widehat\mu T$ is continuous on $\cS_0'(\R^n)$.

Let $u\in\cS_\infty'(\R^n)$. Using the Fourier isomorphism $\cF:\cS_\infty'(\R^n)\to\cS_0'(\R^n)$, we define
$\mu*u\in\cS_\infty'(\R^n)$ by prescribing its Fourier transform:
\begin{equation}\label{eq:Fourier-measure-conv}
\cF(\mu*u):=\widehat\mu\,\cF u \in\cS_0'(\R^n),
\end{equation}
where the product on the right-hand side is understood by duality on $\cS_0$:
\[
\langle \widehat\mu\,\cF u,\psi\rangle := \langle \cF u,\widehat\mu\,\psi\rangle,
\qquad \forall\,\psi\in\cS_0(\R^n).
\]
This defines a continuous linear operator $u\mapsto \mu*u$ on $\cS_\infty'(\R^n)$.

Let $1\le p\le\infty$ and assume $u\in L^p(\R^n)$. Then $u$ defines an element of $\cS_\infty'(\R^n)$ by
restriction, hence $\mu*u$ is well-defined by \eqref{eq:Fourier-measure-conv}. Moreover, if $\mu$ is a finite
Borel measure, then the classical convolution
\[
(\mu*u)(x):=\int_{\R^n}u(x-t)\,d\mu(t)
\]
is defined for a.e.\ $x\in\R^n$ and satisfies the Young estimate
\[
\|\mu*u\|_{L^p(\R^n)}\le |\mu|(\R^n)\,\|u\|_{L^p(\R^n)}.
\]
In this case, the distribution $\mu*u\in\cS_\infty'(\R^n)$ defined by \eqref{eq:Fourier-measure-conv}
coincides with the tempered distribution induced by the function $x\mapsto\int u(x-t)\,d\mu(t)$; namely,
for every $\varphi\in\cS_\infty(\R^n)$,
\[
\langle \mu*u,\varphi\rangle=\int_{\R^n}\int_{\R^n}u(x-t)\varphi(x)dx\,d\mu(t).
\]


\subsection{Bessel Potential and Bessel Spaces}\label{bessel poten}

It is well known (see, e.g., Triebel \cite{triebel2006theory}) that the Bessel potential space $\mathcal{L}_s^p(\R^n)$ is a Banach space. For $1\le p\le \infty$ and $s\ge 0$, we have the continuous embedding $\mathcal{L}_s^p(\R^n)\hookrightarrow L^{p}(\R^n).$
We mention that this inclusion fails for $s<0$; in fact, the embedding relation is reversed, yielding $L^p(\R^n)\hookrightarrow \mathcal{L}_s^p(\R^n)$ for $s<0$. Furthermore, for $1\le p<\infty$ and $s\in \mathbb{R}$, the Schwartz space $\cS(\R^n)$ is dense in $\mathcal{L}_s^p(\R^n)$.

\begin{definition}
For $s>0$, the Bessel kernel $G_s$ is defined as the tempered distribution whose 
Fourier transform is
\[
\widehat G_s(\xi)=(1+4\pi^2|\xi|^2)^{-s},\quad \xi\in\R^n.
\]
\end{definition}

By \cite[Chapter V]{stein1970singular},
 the Bessel kernel $G_s$ has the integral
representation
\[ G_s(x)
:=\cF^{-1}(\widehat G_s)(x)
=
\frac{1}{(4\pi)^{\frac n2}\Gamma(s)}
\int_0^\infty \frac{e^{-\frac{|x|^2}{4t}-t}}{t^{\frac{n}{2}+1-s}}dt,\quad x\in \mathbb{R}^n.\]
Equivalently, in terms of the Gaussian heat kernel $p_t(x)$ defined in \eqref{gaussian}, we can write
\[G_s(x)
=
\frac{1}{\Gamma(s)}
\int_0^\infty e^{-t}\,t^{s-1}\,p_t(x)\,dt,
\quad x\in\R^n.\]

Moreover, $G_s$ is integrable on $\R^n$.  
Indeed, using the identity
\[
\int_{\R^n}p_t(x)\,dx=1\qquad\text{for every }t>0,
\]
and applying Fubini's theorem, we obtain
\begin{equation}\label{gssg}
    \|G_s\|_{L^1(\R^n)}
=
\int_{\R^n}G_s(x)\,dx
=
\frac{1}{\Gamma(s)}
\int_0^\infty e^{-t}t^{s-1}
\Bigl(\int_{\R^n}p_t(x)\,dx\Bigr)\,dt
=1.
\end{equation}

By (\ref{gssg}), we know that convolution with $G_s$ defines a bounded operator on $L^p(\R^n)$
for every $1\le p\le\infty$, the Bessel potential operator
\[
J_s f := G_s * f
\]
is an isomorphism of $L^p(\R^n)$ onto the Bessel potential space $\mathcal{L}_{2s}^p(\R^n)$. Let $s\in \mathbb{R}$. For $u\in\cS(\R^n),$ we define
\[
(I-\Delta)^{s}u
:=\cF^{-1}\bigl((1+4\pi^2|\xi|^2)^{s}\widehat u(\xi)\bigr).
\]
Since $(1+4\pi^2|\xi|^2)^{s}$ is a smooth function with at most polynomial growth,
the multiplier $(1+4\pi^2|\xi|^2)^{s}$ maps $\cS(\R^n)$ continuously into itself;
hence $(I-\Delta)^{s}:\cS(\R^n)\to\cS(\R^n)$ is a continuous linear operator.

The operator $(I-\Delta)^{s}$ extends uniquely by duality to a
continuous linear map on $\cS'(\R^n)$, still denoted by $(I-\Delta)^{s}$, via
\[
\langle (I-\Delta)^{s}u,\varphi\rangle
:=\langle u,(I-\Delta)^{s}\varphi\rangle,
\qquad u\in\cS'(\R^n),\ \varphi\in\cS(\R^n).
\]

By construction we have for $s>0$, in the sense of tempered distributions,
\[
\cF\bigl((I-\Delta)^{\pm s}u\bigr)(\xi)
=(1+4\pi^2|\xi|^2)^{\pm s}\widehat u(\xi),
\qquad u\in\cS'(\R^n),
\]
and thus,
\[(I-\Delta)^{s}\circ (I-\Delta)^{-s}=(I-\Delta)^{-s}\circ (I-\Delta)^{s}=\mathrm{Id}\quad \text{in}\quad \cS'(\R^n).\]

Hence, the operator $(I-\Delta)^{-s},s>0$ is an isometric isomorphism from $L^p(\R^n)$ onto
$\mathcal{L}_{2s}^p(\R^n)$: 
\[
\|(I-\Delta)^{-s} f\|_{\mathcal{L}_{2s}^p}
=\|f\|_{L^p},
\]
and for every $u\in \mathcal{L}_{2s}^p(\R^n)$ we have $u=(I-\Delta)^{-s} f$ with $f:=(I-\Delta)^s u\in L^p(\R^n)$.

We will repeatedly convolve $f\in L^p(\mathbb R^n)$ with an integrable kernel
$g\in L^1(\mathbb R^n)$ whose Fourier transform satisfies $\widehat g\in\mathcal O_M(\mathbb R^n)$.
While Young's inequality ensures $g*f\in L^p\subset\mathcal S'$, we also need the
distributional Fourier identity $\widehat{g*f}=\widehat g\,\widehat f$.
The following lemma provides this fact via a standard mollification argument.

\begin{lemma}\label{lem:Fourier-conv-L1-OM}
Let $g\in L^1(\R^n)$ and $f\in L^p(\R^n)$ for some $1\le p\le\infty$. Assume moreover that
\[
\widetilde g:=\widehat g \in \mathcal O_M(\R^n).
\]
where $\mathcal O_M(\R^n)$ is defined in \eqref{omom}. Then $g*f$ is an $L^p$-function and
\[
\widehat{g*f}=\widehat g\,\widehat f \qquad\text{in }\cS'(\R^n).
\]
\end{lemma}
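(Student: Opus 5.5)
The plan is to prove the identity by pairing against Schwartz functions and approximating $f$ by Schwartz functions, in the $L^p$ topology for $p<\infty$ and by a weaker limit for $p=\infty$. Young's inequality gives $\|g*f\|_{L^p}\le\|g\|_{L^1}\|f\|_{L^p}$, so $g*f\in L^p\subset\cS'$. Since $\widehat g\in\mathcal O_M$, the product $\widehat g\,\widehat f$ is a well-defined element of $\cS'$. The claim $\widehat{g*f}=\widehat g\,\widehat f$ is therefore equivalent to
\[
\langle g*f,\widehat\varphi\rangle=\langle f,\cF(\widehat g\,\varphi)\rangle\qquad\text{for all }\varphi\in\cS(\R^n).
\]
Here $\cF(\widehat g\varphi)\in\cS$ because multiplication by $\widehat g$ maps $\cS$ continuously into itself.

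\textbf{Key observation.} For $\varphi\in\cS$, the function $\widehat g\,\varphi$ is the Fourier transform of $g*\check\varphi$, with $\check\varphi=\cF^{-1}\varphi\in\cS$. Indeed, $g*\check\varphi\in L^1$ and the $L^1$ convolution theorem gives $\cF(g*\check\varphi)=\widehat g\,\varphi$. Since this lies in $\cS$, we get $g*\check\varphi=\cF^{-1}(\widehat g\varphi)\in\cS$. The identity to be proven then reduces to a Fubini statement,
\[
\int (g*f)(x)\,\widehat\varphi(x)\,dx=\int f(y)\,(\tilde g*\widehat\varphi)(y)\,dy,
\qquad \tilde g(x)=g(-x).
\]
It is valid because $\iint |g(x-y)||f(y)||\widehat\varphi(x)|\,dy\,dx\le \|g\|_{L^1}\|f\|_{L^p}\|\widehat\varphi\|_{L^{p'}}<\infty$. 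One then checks $\tilde g*\widehat\varphi=\cF(\widehat g\,\varphi)$. Both sides are $L^1\cap L^\infty$ functions, and since $\cF(\widehat g\varphi)\in\cS$ the identity can be verified on the Fourier side using Fourier inversion in $L^1$ together with $\cF(\tilde g)=\overline{\cF}$-type symmetries. Explicitly, $\tilde g*\widehat\varphi$ has $L^1$ Fourier transform $\widehat{\tilde g}\cdot\cF\widehat\varphi=\widehat g(\cdot)\,\varphi(\cdot)$ after a reflection. Uniqueness of $L^1$ Fourier transforms then yields equality a.e.

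\textbf{Mollification variant.} Alternatively, following the paper's announced mollification route, approximate $f$ by $f_\varepsilon=(f*\eta_\varepsilon)\chi(\varepsilon\cdot)$. Each $f_\varepsilon\in\cS$, where the identity is classical. As $\varepsilon\to0$, both sides converge in $\cS'$: strongly in $L^p$ if $p<\infty$, and in the weak-$*$ sense tested against $L^1$ functions if $p=\infty$. Passage to the limit on the left uses Young's inequality. On the right it uses continuity of $T\mapsto\widehat g\,T$ on $\cS'$, which holds because $\widehat g\in\mathcal O_M$.

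\textbf{Main obstacle.} The expected difficulty is the case $p=\infty$, where Schwartz functions are not dense. This is exactly why the direct Fubini pairing is preferable: it treats all $1\le p\le\infty$ uniformly, needing only $\widehat\varphi\in L^{p'}$. The other care point is justifying $\cF(\widehat g\varphi)=\tilde g*\widehat\varphi$ pointwise. The hypothesis $\widehat g\in\mathcal O_M$ is used precisely here, to know that the object is Schwartz, so that the pairing with an arbitrary $f\in L^p$ is the tempered-distribution pairing.
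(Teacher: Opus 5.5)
Your proof is correct, but your main route differs from the paper's.

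\textbf{The paper's route.} It mollifies $g$ rather than $f$. It sets $g_t=g*\rho_t$ with $\rho\in\cS$ and $\int\rho=1$. It then uses the classical convolution theorem for a Schwartz function against a tempered distribution to get $\widehat{g_t*f}=\widehat{g_t}\,\widehat f$. Finally it lets $t\to0$: on the left it uses $\|g_t-g\|_{L^1}\to0$ with Young's inequality, and on the right the convergence $\widehat{g_t}\varphi\to\widehat g\,\varphi$ in $\cS$. Because the approximation is in $g$, and in $L^1$, the case $p=\infty$ costs nothing there.

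\textbf{Your route.} Your Fubini argument proves the exact identity $\langle g*f,\widehat\varphi\rangle=\langle f,\cF(\widehat g\,\varphi)\rangle$ directly, with no limiting procedure. It is more elementary and uniform in $1\le p\le\infty$. It uses $\widehat g\in\mathcal O_M$ only to make $\widehat g\,\widehat f$ meaningful and to make $\cF(\widehat g\,\varphi)$ an admissible test function.

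\textbf{Remarks.}
First, the identity $\tilde g*\widehat\varphi=\cF(\widehat g\,\varphi)$ holds pointwise in one line: insert the definition of $\widehat g$ and apply Fubini. The detour through uniqueness of $L^1$ Fourier transforms is unnecessary.

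Second, your observation that $g*\check\varphi=\cF^{-1}(\widehat g\,\varphi)\in\cS$ is exactly what justifies the paper's unproved assertion $g_t\in\cS$, since $\widehat{g_t}=\widehat g\,\widehat\rho(t\,\cdot)\in\cS$. For a general $g\in L^1$ without the $\mathcal O_M$ hypothesis, the mollification need not be Schwartz.

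Third, your mollification variant approximates $f$ instead of $g$, which makes $p=\infty$ the delicate case. Young's inequality alone does not give $g*f_\varepsilon\to g*f$ weak-$*$. You would need the duality $\int(g*f_\varepsilon)h=\int f_\varepsilon(\tilde g*h)$ for $h\in L^1$, which is the difficulty the paper sidesteps by mollifying $g$.
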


\begin{proof}
Young's inequality yields $g*f\in L^p(\R^n)\subset \cS'(\R^n)$.

If $g\in\cS$, by \cite[Proposition 2.3.22]{grafakos2008classical}, the standard convolution identity gives
\[
\widehat{g*f}=\widehat g\,\widehat f \qquad\text{in }\cS'(\R^n).
\]

Let $\rho\in\cS(\R^n)$ satisfy $\int_{\R^n}\rho \,dx=1$, and define $\rho_t(x)=t^{-n}\rho(x/t)$ and
$g_t:=g*\rho_t$ for $t>0$. Then $g_t\in\cS(\R^n)$ and $g_t\to g$ in $L^1(\R^n)$ as $t\to0$.
Hence, by Young's inequality,
\[
\|(g_t-g)*f\|_{L^p}\le \|g_t-g\|_{L^1}\,\|f\|_{L^p}\xrightarrow[]{t\to0}0,
\]
so $g_t*f\to g*f$ in $L^p$, thus also in $\cS'$. Since $\cF:\cS'\to\cS'$ is continuous,
\begin{equation}\label{eq:Ft-conv-limit}
\widehat{g_t*f}\longrightarrow \widehat{g*f}\qquad\text{in }\cS'.
\end{equation}

On the other hand, 
\begin{equation}\label{eq:Ft-conv-prod}
\widehat{g_t*f}=\widehat{g_t}\,\widehat f\qquad\text{in }\cS'.
\end{equation}
Moreover,
\[
\widehat{g_t}=\widehat g\,\widehat{\rho_t}=\widetilde g\cdot \widehat\rho(t\,\cdot),
\]
where $\widehat\rho\in\cS(\R^n)$ and $\widehat\rho(0)=\int\rho=1$.

Now fix $\varphi\in\cS(\R^n)$. Let $\psi = \widetilde g \varphi$. Since $\widetilde g\in\mathcal O_M$ and $\varphi \in \cS$, it follows that $\psi \in \cS(\R^n)$. We claim that
\[
\widehat{g_t}\,\varphi - \widetilde g\,\varphi = \psi(\cdot)\big(\widehat\rho(t\,\cdot)-1\big) \longrightarrow 0 \qquad\text{in }\cS(\R^n).
\]
To show this, we need to prove that for any $N \ge 0$ and any multi-index $\alpha$,
\[
\sup_{\xi\in\R^n} (1+|\xi|)^N \big|\partial^\alpha\big[\psi(\xi)\big(\widehat\rho(t\xi)-1\big)\big]\big| \xrightarrow[]{t\to0} 0.
\]
By the Leibniz rule, the derivatives are a linear combination of terms of the form
\[
\partial^{\alpha-\beta}\psi(\xi) \cdot t^{|\beta|}(\partial^\beta \widehat\rho)(t\xi), \qquad \beta \le \alpha.
\]
For $\beta = 0$, the term is $(\partial^\alpha \psi(\xi)) (\widehat\rho(t\xi)-1)$. Since $\partial^\alpha \psi \in \cS$, it decays faster than any polynomial. By the Dominated Convergence Theorem, the weighted supremum goes to $0$ as $t\to 0$. 
For $|\beta| \ge 1$, since $\partial^{\alpha-\beta}\psi \in \cS$, we have $\sup_{\xi} (1+|\xi|)^N |\partial^{\alpha-\beta}\psi(\xi)| \le C_{N,\alpha,\beta} < \infty$. Furthermore, $\partial^\beta \widehat\rho$ is uniformly bounded. Thus, this term is bounded by $C t^{|\beta|}$, which clearly goes to $0$ as $t \to 0$. 
This proves that
\[
\widehat{g_t}\,\varphi \longrightarrow \widetilde g\,\varphi \qquad\text{in }\cS(\R^n).
\]

Therefore, using $\widehat f\in\cS'(\R^n)$ and the continuity of $\widehat f$ on $\cS$,
\[
\langle \widehat{g_t}\,\widehat f-\widetilde g\,\widehat f,\varphi\rangle
=\langle \widehat f,(\widehat{g_t}-\widetilde g)\varphi\rangle
\longrightarrow 0,
\]
which proves
\begin{equation}\label{eq:prod-limit}
\widehat{g_t}\,\widehat f \longrightarrow \widetilde g\,\widehat f\qquad\text{in }\cS'.
\end{equation}
Finally, combining \eqref{eq:Ft-conv-limit}, \eqref{eq:Ft-conv-prod}, and \eqref{eq:prod-limit}
yields $\widehat{g*f}=\widetilde g\,\widehat f$ in $\cS'$, as claimed.
\end{proof}

We conclude this subsection by recalling the following classical identification:

\begin{lemma}\label{lem:I-Delta-Js}
Let $s>0$ and $1\le p\le \infty.$
Then, for every $f\in L^p(\R^n)$,
\[
J_s f = (I-\Delta)^{-s}f\quad \text{in}\quad L^p(\R^n),
\]
and
\[(I-\Delta)^{s}(G_s*f)=f\quad \text{in}\quad \cS'(\R^n).\]
\end{lemma}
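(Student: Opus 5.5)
The plan has two parts: first show $J_s f$ and $(I-\Delta)^{-s}f$ have the same Fourier transform in $\cS'$, then apply $(I-\Delta)^s$ and use that the two multipliers are inverse to each other. Fourier injectivity on $\cS'$ will do the rest.

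\emph{Step 1: the Fourier identity for $J_s f$.} The natural tool is Lemma~\ref{lem:Fourier-conv-L1-OM} with $g=G_s$. By \eqref{gssg} we have $G_s\in L^1(\R^n)$. We also have $\widehat G_s(\xi)=(1+4\pi^2|\xi|^2)^{-s}\in\mathcal O_M(\R^n)$. To see this, note that $\widehat G_s$ is smooth, since $1+4\pi^2|\xi|^2\ge1$ and $t\mapsto t^{-s}$ is smooth on $[1,\infty)$. By induction, each derivative $\partial^\alpha\widehat G_s$ is a finite sum of terms $c\,P(\xi)(1+4\pi^2|\xi|^2)^{-s-k}$ with $P$ a polynomial of degree at most $k$. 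Each such term is therefore bounded, so all derivatives are bounded.

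The lemma then gives $G_s*f\in L^p$ and
\[
\widehat{G_s*f}=\widehat G_s\,\widehat f=(1+4\pi^2|\xi|^2)^{-s}\widehat f \quad\text{in }\cS'.
\]
By the definition of $(I-\Delta)^{-s}$ on $\cS'$ recalled above, this right-hand side is exactly $\cF\bigl((I-\Delta)^{-s}f\bigr)$. Since $\cF$ is injective on $\cS'$, we get $J_sf=(I-\Delta)^{-s}f$ as tempered distributions. Because $J_s f$ is an $L^p$ function, the identity also holds in $L^p$.

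\emph{Step 2: inverting.} Apply $(I-\Delta)^s$ to the identity from Step 1 and use the composition rule $(I-\Delta)^s\circ(I-\Delta)^{-s}=\mathrm{Id}$ on $\cS'$ stated above. This gives $(I-\Delta)^s(G_s*f)=f$.

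For a self-contained check, one can instead compare Fourier transforms directly:
\[
(1+4\pi^2|\xi|^2)^s\,(1+4\pi^2|\xi|^2)^{-s}\widehat f=\widehat f.
\]
This is legitimate because both factors lie in $\mathcal O_M$. Then
\[
\langle m_1(m_2 T),\varphi\rangle=\langle T,m_2m_1\varphi\rangle=\langle T,\varphi\rangle,
\]
where $m_1$ and $m_2$ are the two multipliers and $T=\widehat f$.

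\emph{Expected difficulty.} There is no real obstacle. The only point needing care is the membership $\widehat G_s\in\mathcal O_M$, which was handled in Step 1. Everything else follows from results already stated.
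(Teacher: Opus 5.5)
Your proposal is correct and follows the paper's proof essentially step for step. You apply Lemma~\ref{lem:Fourier-conv-L1-OM} with $g=G_s$ to get $\widehat{G_s*f}=\widehat G_s\,\widehat f$, identify this with $\cF\bigl((I-\Delta)^{-s}f\bigr)$, conclude by Fourier injectivity, and then invoke $(I-\Delta)^s\circ(I-\Delta)^{-s}=\mathrm{Id}$ on $\cS'$. Your explicit check that $\widehat G_s\in\mathcal O_M$ also fills in a step the paper only asserts.
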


\begin{proof}
Let $f\in L^p(\R^n)$ and recall that $L^p(\R^n)\subset\cS'(\R^n)$ for every $1\le p\le \infty,$ see \cite[Examples 2.3.5]{grafakos2008classical}.
Fix $\varphi\in\cS(\R^n)$ and set $\psi:=\cF^{-1}\varphi\in\cS(\R^n)$, so that
$\varphi=\cF[\psi]$. Using the definition of the Fourier transform on $\cS'$, we compute
\[
\langle J_s f,\varphi\rangle
=\langle G_s*f,\cF[\psi]\rangle
=\langle \widehat{G_s*f},\psi\rangle.
\]
Since $\widehat G_s\in \mathcal O_M(\R^n)$, by Lemma \ref{lem:Fourier-conv-L1-OM}, we have
\[
\widehat{G_s*f}=\widehat G_s\,\widehat f \qquad\text{in }\cS'(\R^n).
\]
Therefore,
\[
\langle J_s f,\varphi\rangle
=\langle \widehat G_s\,\widehat f,\psi\rangle.
\]
On the other hand, by definition of $(I-\Delta)^{-s}$ as a Fourier multiplier on $\cS'$,
\[
\cF({(I-\Delta)^{-s}f})=(1+4\pi^2|\xi|^2)^{-s}\,\widehat f
=\widehat G_s\,\widehat f,
\]
hence,
\[
\langle J_s f,\varphi\rangle=\langle (I-\Delta)^{-s}f,\varphi\rangle
\qquad \forall\,\varphi\in\cS(\R^n),
\]
so $J_s f=(I-\Delta)^{-s}f$ in $\cS'(\R^n)$. Since both $J_s f$ and $(I-\Delta)^{-s}f$ belong to $L^p(\R^n)$ and hence
\[
J_s f=(I-\Delta)^{-s}f \quad \text{in } L^p(\R^n).
\]

Then the second claim follows from the fact
\[(I-\Delta)^{s}\circ (I-\Delta)^{-s}=(I-\Delta)^{-s}\circ (I-\Delta)^{s}=\mathrm{Id}\quad \text{in}\quad \cS'(\R^n),\]
hence, we complete the proof.
\end{proof}

Consequently, one has
\begin{align*}
    \mathcal{L}_s^p(\R^n)
=&\Bigl\{u\in\cS'(\R^n):\cF^{-1}\bigl((1+4\pi^2|\xi|^2)^{s/2}\widehat u(\xi)\bigr)\in L^p(\R^n)\Bigr\}\\=&\Bigl\{u\in\cS'(\R^n):(I-\Delta)^{\frac{s}{2}}u\in L^p(\R^n)\Bigr\}\\=&\Bigl\{u\in\cS'(\R^n):u=G_{\frac{s}{2}}*f,f\in L^p(\R^n)\Bigr\}\\=&\Bigl\{u\in L^p(\R^n):u=G_{\frac{s}{2}}*f,f\in L^p(\R^n)\Bigr\}
=J_{\frac{s}{2}}(L^p(\R^n))
\end{align*}
with equivalent norms.


\section{Logarithmic Bessel and Riesz Potentials}

In this section we introduce the two potential kernels that underpin our whole-space
regularity theory: the \emph{logarithmic Bessel potential} (inhomogeneous) and the
\emph{logarithmic Riesz potential} (homogeneous). We first define the logarithmic
Bessel kernel $K_{s+\ln}^{\lambda}$ through its Fourier multiplier and derive
useful pointwise representations, including an $\alpha$--integral formula in terms
of shifted Bessel kernels and a Fourier--Bessel (Hankel-type) representation. These
formulas enable us to establish sharp asymptotics of $K_{s+\ln}^{\lambda}$ both near
the origin and at infinity, together with explicit leading constants. 

We then
introduce the logarithmic Riesz potential and the homogeneous functional framework
based on the subspaces $\mathcal S_\infty$ and $\mathcal S_0$, which removes
polynomial ambiguities and isolates the low-frequency singularity. Finally, we
connect the homogeneous and inhomogeneous theories via a dyadic Littlewood--Paley
construction: we show that the transition multiplier generates an $L^1$-kernel, and
we obtain a finite-measure decomposition that expresses the inhomogeneous symbol as
a regular term plus a weighted homogeneous fractional--logarithmic component. This
bridge will be the key tool for transferring mapping properties and establishing
$L^p$ regularity for equations driven by $(-\Delta)^{s+\ln}$.

Throughout this section we fix a parameter $\lambda>1$, so that
\[
\ln(\lambda+4\pi^2|\xi|^2)\ge \ln\lambda>0,
\qquad \forall\,\xi\in\mathbb R^n.
\]

\subsection{Logarithmic Bessel Kernel and Spaces}

Let $s>0$ and $\lambda>1$. For $\varphi\in\cS(\R^n)$ we define 
\[
(\lambda I-\Delta)^{s+\ln}\varphi
:=
\cF^{-1}\!\Bigl((\lambda+4\pi^2|\xi|^2)^s\ln(\lambda+4\pi^2|\xi|^2)\,\widehat\varphi(\xi)\Bigr).
\]
Since $(\lambda+4\pi^2|\xi|^2)^s\ln(\lambda+4\pi^2|\xi|^2)$ is a smooth function with at
most polynomial growth, this defines a continuous linear map
\[
(\lambda I-\Delta)^{s+\ln}:\cS(\R^n)\to\cS(\R^n).
\]
By duality we extend it to a continuous linear operator on tempered
distributions by
\[
\langle(\lambda I-\Delta)^{s+\ln}u,\varphi\rangle
:=\langle u,(\lambda I-\Delta)^{s+\ln}\varphi\rangle,
\quad u\in\cS'(\R^n),\ \varphi\in\cS(\R^n).
\]
In particular,
\[
\cF\bigl((\lambda I-\Delta)^{s+\ln}u\bigr)(\xi)
=(\lambda+4\pi^2|\xi|^2)^s\ln(\lambda+4\pi^2|\xi|^2)\,\widehat u(\xi)
\quad\text{in }\cS'(\R^n).
\]

\begin{lemma}
\label{lem:log-bessel-fcalc}
Let $s>0$ and $\lambda>1$. For every $\varphi\in\cS(\R^n)$ one has
\[
(\lambda I-\Delta)^{s+\ln}\varphi
=
(\lambda I-\Delta)^s\circ\ln(\lambda I-\Delta)\,\varphi
=
\ln(\lambda I-\Delta)\circ(\lambda I-\Delta)^s\varphi,
\]
where $\ln(\lambda I-\Delta)$ is understood as the Fourier multiplier with
symbol $\ln(\lambda+4\pi^2|\xi|^2)$.
\end{lemma}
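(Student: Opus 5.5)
The plan is to reduce everything to multiplication of symbols on the Fourier side, using that all three symbols involved lie in $\mathcal O_M(\mathbb R^n)$. Write
\[
a(\xi):=(\lambda+4\pi^2|\xi|^2)^s,\qquad b(\xi):=\ln(\lambda+4\pi^2|\xi|^2),
\]
so that the symbol of $(\lambda I-\Delta)^{s+\ln}$ is the pointwise product $ab$. The first step is to check that $a,b\in\mathcal O_M(\mathbb R^n)$.

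Since $\lambda>1$, the function $\rho(\xi):=\lambda+4\pi^2|\xi|^2$ is smooth and satisfies $\rho\ge\lambda>1$. Therefore $t\mapsto t^s$ and $t\mapsto\ln t$ are smooth on the range of $\rho$, and $a=\rho^s$, $b=\ln\rho$ are $C^\infty$. For the growth bounds, one checks by induction (or Faà di Bruno) that $\partial^\alpha(\rho^s)$ is a finite sum of terms $c\,\rho^{s-k}P(\xi)$ with $P$ a polynomial. Since $\rho^{s-k}\le C(1+|\xi|)^{2|s-k|}$, each term has polynomial growth. Similarly, for $|\alpha|\ge1$, $\partial^\alpha\ln\rho$ is a sum of terms $\rho^{-k}P(\xi)$, and $|\ln\rho|\le C(1+|\xi|)$. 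Hence $a,b,ab\in\mathcal O_M$, and $\mathcal O_M$ is an algebra.

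Consequently, each of the multiplication maps $M_a,M_b,M_{ab}$ is continuous on $\mathcal S(\mathbb R^n)$, so the operators
\[
(\lambda I-\Delta)^s:=\mathcal F^{-1}M_a\mathcal F,\qquad \ln(\lambda I-\Delta):=\mathcal F^{-1}M_b\mathcal F
\]
map $\mathcal S$ continuously into $\mathcal S$. This is what makes the compositions in the statement well defined on $\varphi\in\mathcal S$: the intermediate function $\ln(\lambda I-\Delta)\varphi$ is again Schwartz, so $(\lambda I-\Delta)^s$ may be applied to it, and symmetrically in the other order.

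Now fix $\varphi\in\mathcal S$ and set $\psi:=\ln(\lambda I-\Delta)\varphi\in\mathcal S$. Then $\widehat\psi=b\widehat\varphi$ by Fourier inversion on $\mathcal S$. Hence
\[
(\lambda I-\Delta)^s\psi=\mathcal F^{-1}\bigl(a\,b\,\widehat\varphi\bigr)=(\lambda I-\Delta)^{s+\ln}\varphi .
\]
The identical computation in the opposite order gives $\mathcal F^{-1}(b\,a\,\widehat\varphi)$, and $ab=ba$ pointwise, which yields the second equality.

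The only step requiring any care is the $\mathcal O_M$ verification for $b=\ln\rho$, and it is mild. It hinges on $\lambda>1$ (in fact $\lambda>0$ would suffice) keeping $\rho$ bounded away from $0$, so that no logarithmic singularity appears. The remaining steps are formal manipulations of Fourier multipliers on $\mathcal S$.
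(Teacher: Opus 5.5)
Your proposal is correct and follows the same route as the paper: both reduce the identity to the pointwise product of the symbols $(\lambda+4\pi^2|\xi|^2)^s$ and $\ln(\lambda+4\pi^2|\xi|^2)$ on the Fourier side, together with their commutativity. Your version also checks that each factor lies in $\mathcal O_M$, so that the intermediate functions remain in $\mathcal S$ and the compositions are well defined. The paper leaves that point implicit.
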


\begin{proof}
Fix $\varphi\in\cS(\R^n)$. By definition of Fourier multipliers,
\[
\cF\!\left((\lambda I-\Delta)^s\circ\ln(\lambda I-\Delta)\,\varphi\right)(\xi)
=
(\lambda+4\pi^2|\xi|^2)^s\,\ln(\lambda+4\pi^2|\xi|^2)\,\widehat\varphi(\xi).
\]
Taking inverse Fourier transform gives
\[
(\lambda I-\Delta)^s\circ\ln(\lambda I-\Delta)\,\varphi
=
\cF^{-1}\!\Bigl((\lambda+4\pi^2|\xi|^2)^s\ln(\lambda+4\pi^2|\xi|^2)\,\widehat\varphi(\xi)\Bigr)
=
(\lambda I-\Delta)^{s+\ln}\varphi,
\]
which proves the first identity. The second identity and the commutation
property follow similarly, since on $\cS(\R^n)$ all these operators are
Fourier multipliers and hence their symbols multiply pointwise.
\end{proof}

Motivated by the Bessel potential spaces $\mathcal{L}_s^p(\R^n),$ we now define spaces associated with $(\lambda I-\Delta)^{s+\ln}$.

\begin{definition}
Let $s>0$, $\lambda>1$ and $1\le p\le\infty$.  
The logarithmic Bessel space $\mathcal{L}^{p}_{s+\ln,\lambda}(\R^n)$ is
\[
\mathcal{L}^{p}_{s+\ln,\lambda}(\R^n)
:=
\Bigl\{u\in\cS'(\R^n)\;:\;
(\lambda I-\Delta)^{s+\ln}u\in L^p(\R^n)\Bigr\},
\]
equipped with the norm
\[
\|u\|_{\mathcal{L}^{p}_{s+\ln,\lambda}(\R^n)}
:=
\bigl\|(\lambda I-\Delta)^{s+\ln}u\bigr\|_{L^p(\R^n)}.
\]
\end{definition}

\begin{remark}
(i) One could equivalently define the norm by taking the $L^p$–norm of the
Fourier multiplier,
\[
\|u\|_{\mathcal{L}^{p}_{s+\ln,\lambda}(\R^n)}
=
\Bigl\|
\cF^{-1}\Bigl((\lambda+4\pi^2|\xi|^2)^s\ln(\lambda+4\pi^2|\xi|^2)\,\widehat u\Bigr)
\Bigr\|_{L^p(\R^n)}.
\]

(ii) $(\lambda I-\Delta)^{s+\ln}:\mathcal{L}^{p}_{s+\ln,\lambda}(\R^n)\rightarrow L^p(\R^n)$ is isomorphism.
\end{remark}

In analogy with the classical Bessel potential operator $(I-\Delta)^{-s}$,
we now introduce the inverse of the logarithmic Bessel operator as a
pseudo–differential operator with symbol
$$(\lambda+4\pi^2|\xi|^2)^{-s}\bigl[\ln(\lambda+4\pi^2|\xi|^2)\bigr]^{-1}.$$

\begin{definition}\label{kernel1}
Let $s>0$ and $\lambda>1$. The logarithmic Bessel kernel is defined as the tempered distribution whose Fourier transform is
\[
\widehat{K_{s+\ln}^{\lambda}}(\xi)
:=
\frac{1}{(\lambda+4\pi^2|\xi|^2)^s\ln(\lambda+4\pi^2|\xi|^2)},
\quad \xi\in\R^n.
\]
We define the logarithmic Bessel potential operator
$(\lambda I-\Delta)^{-(s+\ln)}$ on $\cS(\R^n)$ by
\[
(\lambda I-\Delta)^{-(s+\ln)}\varphi
:=\cF^{-1}\!\bigl(\widehat{K_{s+\ln}^{\lambda}}(\xi)\,\widehat\varphi(\xi)\bigr),
\qquad \varphi\in\cS(\R^n),
\]
and extend it to $\cS'(\R^n)$ by duality. 
\end{definition}

\begin{remark}
    $(\lambda I-\Delta)^{s+\ln} \circ (\lambda I-\Delta)^{-(s+\ln)} =(\lambda I-\Delta)^{-(s+\ln)} \circ (\lambda I-\Delta)^{s+\ln} =\mathrm{Id}\:\:\text{in}$ $\:\:\cS'(\R^n).$
\end{remark}

Next, we emphasize a crucial structural observation regarding the
integral representation \eqref{eq:G-lambda-heat}. The weight
$\Gamma(\alpha)^{-1}e^{-\lambda t}t^{\alpha-1}$ in \eqref{eq:G-lambda-heat} is,
up to the normalizing factor $\lambda^{-\alpha}$, the density of a Gamma random
variable. More precisely, let $T_\alpha\sim\mathrm{Gamma}(\alpha,\lambda)$ (shape
$\alpha$, rate $\lambda$), i.e.\ with density
\[
f_\alpha(t)=\frac{\lambda^\alpha}{\Gamma(\alpha)}\,t^{\alpha-1}e^{-\lambda t},
\qquad t>0.
\]
Then
\[
\mathbb E[T_\alpha]=\frac{\alpha}{\lambda}=:\mu_\alpha,
\qquad
\mathrm{Var}(T_\alpha)=\frac{\alpha}{\lambda^2}=:\sigma_\alpha^2,
\]
and \eqref{eq:G-lambda-heat} can be rewritten as the mixture formula
\begin{equation}\label{eq:G-lambda-mixture}
G^\lambda_\alpha(x)
=\frac{1}{\Gamma(\alpha)}\int_0^\infty e^{-\lambda t}t^{\alpha-1}p_t(x)\,dt
=\frac{1}{\lambda^\alpha}\int_0^\infty p_t(x)\,f_\alpha(t)\,dt
=\frac{1}{\lambda^\alpha}\,\mathbb E\!\big[p_{T_\alpha}(x)\big],\,x\in \mathbb{R}^n\setminus \left\{0\right\}.
\end{equation}
This probabilistic viewpoint will be exploited below: the parameter $\alpha$
controls the concentration of the mixing measure around its mean
$\mu_\alpha=\alpha/\lambda$, while $\lambda$ fixes the exponential tail, and
both features translate into quantitative information on the spatial behaviour
of $G^\lambda_\alpha$.

\medskip
In particular, since $T_\alpha$ concentrates at $\mu_\alpha$ as $\alpha\to\infty$,
the mixture \eqref{eq:G-lambda-mixture} becomes asymptotically deterministic.
The next proposition makes this precise and can be viewed as a Laplace-type
asymptotic for the Gamma mixture: one Taylor-expands the smooth map
$t\mapsto p_t(x)$ around $t=\mu_\alpha$ and then averages with respect to the
Gamma law, using the explicit moments
$\mathbb E[T_\alpha-\mu_\alpha]=0$ and $\mathbb E[(T_\alpha-\mu_\alpha)^2]=\sigma_\alpha^2$.

\begin{proof}[\textbf{Proof of Proposition \ref{prop:Galp-laplace-asymp}.}]
Let $T_\alpha\sim \mathrm{Gamma}(\alpha,\lambda)$, i.e.\ with density $f_\alpha(t)$
and then the heat-kernel representation yields
\[
G^\lambda_\alpha(x)=\frac{1}{\lambda^\alpha}\,\mathbb{E}\big[p_{T_\alpha}(x)\big].
\]

Fix $x\in \mathbb{R}^n\setminus \left\{0\right\}$. We expand $t\mapsto p_t(x)$ at $t=\mu_\alpha$:
\begin{equation}\label{error-term}
    p_{T_\alpha}(x)=p_{\mu_\alpha}(x)+p'_{\mu_\alpha}(x)(T_\alpha-\mu_\alpha)
+\frac12 p''_{\mu_\alpha}(x)(T_\alpha-\mu_\alpha)^2+R_\alpha,
\end{equation}
and by Taylor's theorem, there exists some random variable $\theta_\alpha$ between $T_\alpha$ and $\mu_\alpha$,
\[
R_\alpha=\frac{1}{6}\,p'''_{\theta_\alpha}(x)\,(T_\alpha-\mu_\alpha)^3.
\]
Taking expectations and using $\mathbb{E}[T_\alpha-\mu_\alpha]=0$ gives
\begin{equation}\label{eq:EpT-expansion}
\mathbb{E}\big[p_{T_\alpha}(x)\big]
=p_{\mu_\alpha}(x)+\frac12 p''_{\mu_\alpha}(x)\,\sigma_\alpha^2+\mathbb{E}[R_\alpha].
\end{equation}

We now estimate the derivatives of $p_t(x)=(4\pi t)^{-n/2}e^{-|x|^2/(4t)}$ for large $t$.
A direct computation shows that for $k=1,2,3$,
\[
\partial_t^k p_t(x)=p_t(x)\,t^{-k}\,Q_k\!\left(\frac{|x|^2}{t}\right),
\]
where $Q_k$ is a polynomial in one variable of degree at most $k$.

Fix $x\in\R^n$ and set $t_*:=\frac{|x|^2}{2n}$. Since $p_t(x)$ attains its maximum at $t=t_*$
and is strictly decreasing for $t\ge t_*$, there exists
\[
\alpha_x:=\max\Big\{1,\frac{\lambda|x|^2}{n}\Big\}
\quad\text{such that}\quad
\frac{\mu_\alpha}{2}=\frac{\alpha}{2\lambda}\ge t_*
\quad\forall\,\alpha\ge \alpha_x.
\]
Consequently, for all $\alpha\ge \alpha_x$ we have
\[
\sup_{t\ge \mu_\alpha/2} p_t(x)=p_{\mu_\alpha/2}(x),
\]
it follows that for $k=1,2,3$,
\[
\sup_{t\ge \mu_\alpha/2}|\partial_t^k p_t(x)|
\le  C_{k,x}\,\mu_\alpha^{-k}\,p_{\mu_\alpha/2}(x),
\quad \alpha\ge \alpha_x,
\]
where $C_{k,x}>0$ depends only on $n,k$ and $|x|$.
Moreover, since
\[
\frac{p_{\mu_\alpha/2}(x)}{p_{\mu_\alpha}(x)}
=2^{n/2}\exp\!\Bigl(-\frac{|x|^2}{4\mu_\alpha}\Bigr)\le 2^{n/2},
\]
we may further bound
\begin{equation}\label{eq:pk-der-bound}
    \sup_{t\ge \mu_\alpha/2}|\partial_t^k p_t(x)|
\le \widetilde C_{k,x}\,\mu_\alpha^{-k}\,p_{\mu_\alpha}(x),
\qquad k=1,2,3,\ \alpha\ge \alpha_x,
\end{equation}
for some $\widetilde C_{k,x}>0$ and $\widetilde C_{k,x}$ is locally uniformly bounded in $x$. Hence, using \eqref{eq:pk-der-bound} with $k=2$,
\[
\frac12\,|p''_{\mu_\alpha}(x)|\,\sigma_\alpha^2
\le C_x\,p_{\mu_\alpha}(x)\,\mu_\alpha^{-2}\,\sigma_\alpha^2
= C_x\,p_{\mu_\alpha}(x)\,\alpha^{-1}.
\]

On the event \(\{T_\alpha\ge \mu_\alpha/2\}\), Taylor's formula gives
\[
|R_\alpha|
\le
\frac16
\sup_{t\ge \mu_\alpha/2}|p'''_t(x)|
\,|T_\alpha-\mu_\alpha|^3.
\]
Therefore, using \eqref{eq:pk-der-bound} with \(k=3\) and
$\mathbb E\big[|T_\alpha-\mu_\alpha|^3\big]
=
O(\sigma_\alpha^3),$ we get
\[
\begin{aligned}
\mathbb E\big[|R_\alpha|;\,T_\alpha\ge \mu_\alpha/2\big]
\le
C_x p_{\mu_\alpha}(x)\mu_\alpha^{-3}\sigma_\alpha^3
=C_x p_{\mu_\alpha}(x)\alpha^{-3/2}.
\end{aligned}
\]

It remains to consider the lower-tail event
\(\{T_\alpha<\mu_\alpha/2\}\). 
For every \(\theta>0\), we have
\[
\mathbb P\Big(T_\alpha<\frac{\mu_\alpha}{2}\Big)
=
\mathbb P\Big(e^{-\theta T_\alpha}>e^{-\theta\mu_\alpha/2}\Big)
\le
e^{\theta\mu_\alpha/2}\mathbb E[e^{-\theta T_\alpha}].
\]
Since \(T_\alpha\sim\mathrm{Gamma}(\alpha,\lambda)\),
\[
\mathbb E[e^{-\theta T_\alpha}]
=
\left(\frac{\lambda}{\lambda+\theta}\right)^\alpha .
\]
Choosing \(\theta=\lambda\), and using \(\mu_\alpha=\alpha/\lambda\), we obtain
\[
\mathbb P\Big(T_\alpha<\frac{\mu_\alpha}{2}\Big)
\le
e^{\alpha/2}2^{-\alpha}.
\]

By the expansion \eqref{error-term} and the boundedness of the heat kernel
\(t\mapsto p_t(x)\) on \((0,\infty)\) for fixed \(x\neq0\), we obtain
\[
|R_\alpha|\le C_x
\qquad\text{on } \{T_\alpha<\mu_\alpha/2\}.
\]
Therefore, using the Gamma lower-tail estimate,
we get
\[
\mathbb E\big[|R_\alpha|;\,T_\alpha<\mu_\alpha/2\big]
\le
C_x\mathbb P\left(T_\alpha<\frac{\mu_\alpha}{2}\right)\le C_{x}e^{\alpha/2}2^{-\alpha}.
\]
Combining the two estimates yields
\[
|\mathbb E[R_\alpha]|
=
O\!\left(p_{\mu_\alpha}(x)\alpha^{-1}\right).
\]
Plugging these bounds into \eqref{eq:EpT-expansion} yields
\[
\mathbb{E}\big[p_{T_\alpha}(x)\big]
=p_{\mu_\alpha}(x)\bigl(1+O(\alpha^{-1})\bigr),
\qquad \alpha\to\infty.
\]
Finally, since $G^\lambda_\alpha(x)=\lambda^{-\alpha}\mathbb{E}[p_{T_\alpha}(x)]$ and
$\mu_\alpha=\alpha/\lambda$, we obtain
\[
G^\lambda_\alpha(x)
=\lambda^{-\alpha}\,p_{\alpha/\lambda}(x)\bigl(1+O(\alpha^{-1})\bigr),
\]
as claimed. Moreover, the above estimates can be made uniform for $x$ in compact
subsets of $\R^n$, so the $O(\alpha^{-1})$ remainder is uniform on every compact set.
\end{proof}

In the next proposition we show that its value is the logarithmic Bessel
kernel $K_{s+\ln}^{\lambda}$.

\begin{proof}[\textbf{Proof of Proposition \ref{lem:log-bessel-kernel}.}]
Recall that $G^\lambda_\alpha$ is defined by
\[
\widehat{G^\lambda_\alpha}(\xi)=(\lambda+4\pi^2|\xi|^2)^{-\alpha},
\quad \xi\in\R^n,
\]
and that $K_{s+\ln}^{\lambda}$ is the tempered distribution with Fourier transform
\[
\widehat{K_{s+\ln}^{\lambda}}(\xi)=\frac{(\lambda+4\pi^2|\xi|^2)^{-s}}{\ln(\lambda+4\pi^2|\xi|^2)},
\quad \xi\in\R^n.
\]

For every $\xi\in\R^n$, since $\lambda>1$ we have $\ln(\lambda+4\pi^2|\xi|^2)>0$ and thus
\[
\int_s^\infty (\lambda+4\pi^2|\xi|^2)^{-\alpha}\,d\alpha
=\left[-\frac{(\lambda+4\pi^2|\xi|^2)^{-\alpha}}{\ln(\lambda+4\pi^2|\xi|^2)}\right]_{\alpha=s}^{\infty}
=\frac{(\lambda+4\pi^2|\xi|^2)^{-s}}{\ln(\lambda+4\pi^2|\xi|^2)}.
\]
Hence,
\[\int_s^\infty \widehat{G^\lambda_\alpha}(\xi)\,d\alpha=\widehat{K_{s+\ln}^{\lambda}}(\xi)
\qquad\text{for all }\xi\in\R^n.\]

Let $\varphi\in\cS(\R^n)$ and set $\psi:=\cF^{-1}\varphi\in\cS(\R^n)$, so that $\varphi=\cF[\psi]$.
Using the definition of the Fourier transform on tempered distributions,
\begin{equation}\label{eq:hat-identity}
    \langle K_{s+\ln}^{\lambda},\varphi\rangle
=\langle K_{s+\ln}^{\lambda},\cF[\psi]\rangle
=\langle \widehat{K_{s+\ln}^{\lambda}},\psi\rangle.
\end{equation}
Consider the nonnegative function
\[
F(\alpha,\xi):=(\lambda+4\pi^2|\xi|^2)^{-\alpha}\,|\psi(\xi)|
\quad\text{on }[s,\infty)\times\R^n.
\]
Since $\lambda>1$, we have $a(\xi):=\lambda+4\pi^2|\xi|^2>1$ for all $\xi$, hence
\[
\int_s^\infty a(\xi)^{-\alpha}\,d\alpha=\frac{a(\xi)^{-s}}{\ln a(\xi)}.
\]
Moreover, $\ln a(\xi)\ge \ln\lambda>0$, so
\[
\int_s^\infty F(\alpha,\xi)\,d\alpha
=\frac{(\lambda+4\pi^2|\xi|^2)^{-s}}{\ln(\lambda+4\pi^2|\xi|^2)}\,|\psi(\xi)|
\le \frac1{\ln\lambda}\,(\lambda+4\pi^2|\xi|^2)^{-s}\,|\psi(\xi)|.
\]
Since $\psi\in\cS(\R^n)$ and $(\lambda+4\pi^2|\xi|^2)^{-s}$ has at most polynomial decay,
the right-hand side is integrable in $\xi$. Therefore,
\[
\int_{\R^n}\int_s^\infty F(\alpha,\xi)\,d\alpha\,d\xi<\infty,
\]
and Tonelli's theorem applies,
\begin{align*}
\int_s^\infty \langle \widehat{G^\lambda_\alpha},\psi\rangle\,d\alpha
&=\int_s^\infty\int_{\R^n}(\lambda+4\pi^2|\xi|^2)^{-\alpha}\psi(\xi)\,d\xi\,d\alpha\\
&=\int_{\R^n}\int_s^\infty(\lambda+4\pi^2|\xi|^2)^{-\alpha}\,d\alpha\ \psi(\xi)\,d\xi\\
&=\left\langle \int_s^\infty \widehat{G^\lambda_\alpha}\,d\alpha,\ \psi\right\rangle.
\end{align*}
Combining this with \eqref{eq:hat-identity} gives
\begin{equation}\label{eq:pairing-K}
\langle K_{s+\ln}^{\lambda},\varphi\rangle
=\Big\langle \int_s^\infty \widehat{G^\lambda_\alpha}\,d\alpha,\ \psi\Big\rangle
=\int_s^\infty \langle \widehat{G^\lambda_\alpha},\psi\rangle\,d\alpha=\int_s^\infty \langle G^\lambda_\alpha,\varphi\rangle\,d\alpha,
\end{equation}
\medskip

For every $\varphi\in\cS(\R^n)$,
\begin{equation}\label{eq:abs-bound-Galpha}
|\langle G^\lambda_\alpha,\varphi\rangle|
=\Big|\int_{\R^n}G^\lambda_\alpha(x)\varphi(x)\,dx\Big|
\le \|\varphi\|_{L^\infty}\int_{\R^n}G^\lambda_\alpha(x)\,dx
=\|\varphi\|_{L^\infty}\lambda^{-\alpha}.
\end{equation}
We define
\[
T=\int_s^\infty G^\lambda_\alpha\,d\alpha\quad\text{in }\quad\cS'(\R^n),
\]
i.e. the integral is understood in the weak (Pettis) sense, see \cite{talagrand1984pettis,brooks1969representations}. We know this integral exists in $\cS'(\R^n)$ since
\[
\int_s^\infty |\langle G^\lambda_\alpha,\varphi\rangle|\,d\alpha
\le \|\varphi\|_{L^\infty}\int_s^\infty \lambda^{-\alpha}\,d\alpha
=\frac{\lambda^{-s}}{\ln\lambda}\,\|\varphi\|_{L^\infty}<\infty.
\]
Since $(\cS'(\mathbb{R}^n))'=\cS(\mathbb{R}^n)$, we obtain
\[
\langle T,\varphi\rangle=\int_s^\infty \langle G^\lambda_\alpha,\varphi\rangle\,d\alpha.
\]
Consequently, \eqref{eq:pairing-K} becomes
\[
\langle K_{s+\ln}^{\lambda},\varphi\rangle
=\int_s^\infty \langle G^\lambda_\alpha,\varphi\rangle\,d\alpha
=\Big\langle \int_s^\infty G^\lambda_\alpha\,d\alpha,\ \varphi\Big\rangle.
\]
Since $\varphi\in\cS(\R^n)$ was arbitrary, we conclude that
\[
K_{s+\ln}^{\lambda}=\int_s^\infty G^\lambda_\alpha\,d\alpha \qquad\text{in }\cS'(\R^n),
\]
which yields \eqref{eq:K-log-alpha} since
\[\int_s^\infty G^\lambda_{\alpha}(x)\,d\alpha\in L^1(\mathbb{R}^n),\]
thus, we complete the proof.
\end{proof}

In analogy with the one-parameter family of Bessel potential operators
$J_s f := G_s * f$, we introduce the logarithmic one-parameter family by
convolution with the logarithmic Bessel kernel $K_{s+\ln}^{\lambda}$.

\begin{definition}
Let $s>0$ and $\lambda>1$.  
For $f\in\cS(\R^n)$ we define
\[
J_{s+\ln}^{\lambda} f
:= K_{s+\ln}^{\lambda}*f.
\]
\end{definition}

Since $K_{s+\ln}^{\lambda}\in L^1(\R^n)$ by Proposition~\ref{lem:log-bessel-kernel},
Young's inequality implies that $J_{s+\ln}^{\lambda}$ extends uniquely to a
bounded linear operator
\[
J_{s+\ln}^{\lambda}:L^p(\R^n)\to L^p(\R^n),
\qquad 1\le p\le\infty.
\]

\begin{remark}\label{jsgs}
Let $s>0$, $\lambda>1$ and $1\le p\le\infty$. Then for every
$f\in L^p(\R^n)$ one has
\[J_{s+\ln}^{\lambda} f=(\lambda I-\Delta)^{-(s+\ln)}f\quad \text{in}\:\: L^p(\R^n).\]
and
\[
J_{s+\ln}^{\lambda}\bigl((\lambda I-\Delta)^{s+\ln} f\bigr)
=(\lambda I-\Delta)^{s+\ln}\bigl(J_{s+\ln}^{\lambda} f\bigr)=f
\quad\text{in }\cS'(\R^n).
\]
\end{remark}

As a consequence, for $1\le p\le \infty$ we may characterize the logarithmic Bessel
space $\mathcal{L}^{p}_{s+\ln,\lambda}(\R^n)$ either via the operator
$(\lambda I-\Delta)^{s+\ln}$ or via 
$J_{s+\ln}^{\lambda}$.

\begin{coro}
Let $s>0$, $\lambda>1$ and $1\le p\le \infty$. Then
\[
\mathcal{L}^{p}_{s+\ln,\lambda}(\R^n)
=
\bigl\{u\in\cS'(\R^n):(\lambda I-\Delta)^{s+\ln}u\in L^p(\R^n)\bigr\}
=
J_{s+\ln}^{\lambda}(L^p(\R^n)).
\]
\end{coro}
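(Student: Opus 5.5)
We prove the two inclusions directly, using Remark~\ref{jsgs} together with the fact that $(\lambda I-\Delta)^{s+\ln}$ and $(\lambda I-\Delta)^{-(s+\ln)}$ are mutually inverse on $\cS'(\R^n)$. The first equality in the statement is just the definition of $\mathcal{L}^{p}_{s+\ln,\lambda}(\R^n)$, so only the identification with $J_{s+\ln}^{\lambda}(L^p(\R^n))$ needs an argument.

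For the inclusion $\supseteq$, we take $f\in L^p(\R^n)$ and set $u:=J_{s+\ln}^{\lambda}f=K_{s+\ln}^{\lambda}*f$. By Proposition~\ref{lem:log-bessel-kernel}, $K_{s+\ln}^{\lambda}\in L^1$, so Young's inequality gives $u\in L^p(\R^n)\subset\cS'(\R^n)$. Remark~\ref{jsgs} then gives $(\lambda I-\Delta)^{s+\ln}u=f\in L^p$, hence $u\in\mathcal{L}^{p}_{s+\ln,\lambda}(\R^n)$. The same remark yields $\|u\|_{\mathcal{L}^{p}_{s+\ln,\lambda}}=\|f\|_{L^p}$.

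For the inclusion $\subseteq$, we take $u\in\cS'(\R^n)$ with $f:=(\lambda I-\Delta)^{s+\ln}u\in L^p$.
\begin{itemize}
\item We apply $(\lambda I-\Delta)^{-(s+\ln)}$ to both sides. Since the two operators are inverse on $\cS'$ (the remark after Definition~\ref{kernel1}), we get
\[
u=(\lambda I-\Delta)^{-(s+\ln)}f \quad\text{in }\cS'(\R^n).
\]
\item The first identity in Remark~\ref{jsgs} then gives $u=J_{s+\ln}^{\lambda}f$ with $f\in L^p$.
\end{itemize}

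The only real point to check is that the inverse relation holds on all of $\cS'$. This holds because the symbol $(\lambda+4\pi^2|\xi|^2)^{-s}\ln^{-1}(\lambda+4\pi^2|\xi|^2)$ lies in $\mathcal O_M$: it is smooth because $\lambda>1$ keeps the logarithm at least $\ln\lambda>0$, and each of its derivatives is bounded. It also uses the identification of the multiplier action on $L^p$ with convolution, which is Lemma~\ref{lem:Fourier-conv-L1-OM}. Since both facts are already available, the argument is immediate and I expect no real obstacle.
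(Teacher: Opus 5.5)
Your proposal is correct and follows essentially the paper's route: the paper states this corollary without a separate proof, as an immediate consequence of Remark~\ref{jsgs} and the inverse identity $(\lambda I-\Delta)^{s+\ln}\circ(\lambda I-\Delta)^{-(s+\ln)}=\mathrm{Id}$ on $\cS'(\R^n)$, and the same argument appears in the proof of Theorem~\ref{thm:Lp-log-Bessel}. Your check that the inverse symbol lies in $\mathcal O_M$, together with your appeal to Lemma~\ref{lem:Fourier-conv-L1-OM}, supplies the justification the paper leaves implicit.
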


We begin with a key asymptotic lemma for the auxiliary function \(H_s\), which will be used in the near-origin asymptotics and far-field analysis. 

\begin{lemma}\label{lem:Hs-large-time}
Let \(s>0\) and define
\[
H_s(t):=\int_0^\infty \frac{t^p}{\Gamma(s+p)}\,dp,
\qquad t>0.
\]
Then
\[
H_s(t)
=
e^t t^{1-s}\left(1+O(t^{-1})\right),
\qquad t\to\infty .
\]
\end{lemma}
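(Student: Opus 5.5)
We want the large-$t$ asymptotics of $H_s(t)=\int_0^\infty t^p/\Gamma(s+p)\,dp$, which is the continuous analogue of $e^t=\sum_k t^k/k!$. The plan is to compare $H_s$ with the Mittag-Leffler type sum via Laplace's method, but a cleaner route is a direct Laplace-method analysis in the variable $p$. We will use Hankel's representation
\[
\frac{1}{\Gamma(z)}=\frac{1}{2\pi i}\int_{\mathcal H} e^{w}w^{-z}\,dw ,
\]
where $\mathcal H$ is a Hankel contour around the negative real axis. Substituting $z=s+p$ and interchanging the $p$- and $w$-integrals, we get
\[
H_s(t)=\frac{1}{2\pi i}\int_{\mathcal H} e^{w}w^{-s}\int_0^\infty \Bigl(\frac{t}{w}\Bigr)^{p}dp\,dw=\frac{1}{2\pi i}\int_{\mathcal H} \frac{e^{w}w^{-s}}{\ln w-\ln t}\,dw .
\]
This is valid provided the contour is chosen with $|w|>t$ on the portion where we need $|t/w|<1$. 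More precisely, we need $\operatorname{Re}\ln(t/w)<0$, i.e.\ $|w|>t$ along the whole contour. So we take $\mathcal H$ as the two rays $\arg w=\pm\pi$ with $|w|\ge R$, joined by the circle $|w|=R$, with $R>t$, and justify Fubini by absolute convergence.

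The integrand $e^w w^{-s}/(\ln w-\ln t)$ is meromorphic in the slit plane with a simple pole at $w=t$. Deforming the contour, we shrink the circle to radius $r<t$ (say $r=t/2$), crossing the pole. The residue contributes
\[
e^{t}t^{-s}\cdot\Bigl(\frac{d}{dw}\ln w\Big|_{w=t}\Bigr)^{-1}=e^t t^{1-s},
\]
which is exactly the leading term. The remainder is
\[
\frac{1}{2\pi i}\int_{\mathcal H_{r}} \frac{e^{w}w^{-s}}{\ln w-\ln t}\,dw
\]
over a Hankel contour with circle radius $r=t/2$. It remains to show this is $O(e^t t^{-s})$, which gives the relative error $O(t^{-1})$.

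This estimate is the main obstacle. On the circle $|w|=t/2$ we have $|e^w|\le e^{t/2}$, the factor $|\ln w-\ln t|\ge \ln 2$, and the length is $\pi t$. The circle therefore contributes $O(t^{1-s}e^{t/2})$, which is exponentially smaller than $e^tt^{-s}$. On the rays, $|e^w|=e^{-\rho}$ with $\rho\ge t/2$, so their contribution is $O(e^{-t/2}\cdot\mathrm{poly}(t))$.

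The delicate point is the interchange of integrals on the original contour where $|w|$ is large. There we use the exact identity $\int_0^\infty a^p\,dp=-1/\ln a$ for $|a|<1$, applied with $a=t/w$, so $|a|\le t/R<1$ uniformly. Tonelli then applies since $\int_{\mathcal H}|e^w||w|^{-s}\int_0^\infty (t/|w|)^p\,dp\,|dw|<\infty$.

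If the contour argument proves awkward, the fallback is to write $H_s(t)=\int_0^\infty e^{p\ln t-\ln\Gamma(s+p)}dp$ and apply Laplace's method around $p_*$ with $\psi(s+p_*)=\ln t$, using Stirling's formula. That route gives the leading term only up to $1+O(t^{-1/2})$ without extra work, which is why the contour route is preferred.
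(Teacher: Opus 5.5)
Your proof is correct, and it takes a genuinely different route from the paper's. The paper rescales $p=tq$ and uses Stirling's formula uniformly on compact $q$-intervals. It then applies a real-variable Laplace method at the nondegenerate maximum $q=1$ of $\phi(q)=q(1-\ln q)$, with separate estimates for $q$ near $0$ and for $q\to\infty$; this gives exactly the stated $1+O(t^{-1})$.

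You instead insert Hankel's loop integral for $1/\Gamma(s+p)$ and carry out the $p$-integration exactly on a loop with $|w|\ge R>t$. Tonelli is legitimate there, since $|t^p w^{-s-p}|\le |w|^{-s}(t/R)^p$ on that loop and $|e^w|$ is integrable along the rays. You then collect the residue $e^t t^{1-s}$ of $e^w w^{-s}/(\ln w-\ln t)$ at its only pole $w=t$ when the circle is shrunk to radius $t/2$. The sign is right, because $\mathcal H_R-\mathcal H_{t/2}$ is the positively oriented boundary of the slit annulus. On the rays you also have $|\ln w-\ln t|\ge\pi$, which your ray bound uses implicitly.

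Your route buys an exact representation and a much stronger conclusion: $H_s(t)=e^t t^{1-s}+O(t^{1-s}e^{t/2})$, i.e.\ a relative error $O(e^{-t/2})$ rather than $O(t^{-1})$. For $s=1$ this is consistent with the classical formula for the Volterra function $\int_0^\infty x^p/\Gamma(1+p)\,dp$. It also avoids the uniformity issues in Stirling's formula and the tail bookkeeping. The paper's approach, in exchange, stays entirely real-variable.

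One correction to your closing remark: the Laplace--Stirling route does not stall at $1+O(t^{-1/2})$. At a nondegenerate interior maximum with smooth amplitude, the $t^{-1/2}$ correction vanishes in the standard expansion, and that is precisely how the paper obtains $O(t^{-1})$.
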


\begin{proof}
We write
\[
H_s(t)
=
t\int_0^\infty
\frac{t^{tq}}{\Gamma(s+tq)}\,dq .
\]
The main contribution will come from a neighborhood of \(q=1\). We first
derive the asymptotic form of the integrand there. By Stirling's formula,
uniformly for \(q\) in any compact subinterval of \((0,\infty)\),
\[
\Gamma(s+tq)
=
\sqrt{2\pi}\,(s+tq)^{s+tq-\frac12}e^{-(s+tq)}
\left(1+O(t^{-1})\right),
\qquad t\to\infty.
\]
Hence
\[
\frac{t^{tq}}{\Gamma(s+tq)}
=
\frac{1}{\sqrt{2\pi}}\,
t^{\frac12-s}
q^{\frac12-s}
\exp\{tq(1-\ln q)\}
\left(1+O(t^{-1})\right),
\]
again uniformly for \(q\) in compact subintervals of \((0,\infty)\). Indeed,
the factor
\[
e^s\left(1+\frac{s}{tq}\right)^{-s-tq+\frac12}
\]
is equal to \(1+O(t^{-1})\) uniformly on such compact intervals.

Set
\[
\phi(q):=q(1-\ln q),
\qquad q>0.
\]
Then
\[
\phi'(q)=-\ln q,
\qquad
\phi''(q)=-\frac1q.
\]
Thus \(\phi\) has a unique maximum at \(q=1\), and $\phi(1)=1,\phi''(1)=-1$.
Let \(\delta\in(0,1)\) be fixed. On the interval
\[
U_\delta:=(1-\delta,1+\delta),
\]
we therefore have
\[
\frac{t^{tq}}{\Gamma(s+tq)}
=
\frac{1}{\sqrt{2\pi}}\,
t^{\frac12-s}
q^{\frac12-s}
e^{t\phi(q)}
\left(1+O(t^{-1})\right).
\]
Consequently,
\[
t\int_{U_\delta}
\frac{t^{tq}}{\Gamma(s+tq)}\,dq
=
\frac{t^{\frac32-s}}{\sqrt{2\pi}}
\int_{U_\delta}
q^{\frac12-s}e^{t\phi(q)}
\,dq
\left(1+O(t^{-1})\right).
\]
Applying the standard one-dimensional Laplace method, see \cite[Chapter 2]{murray2012asymptotic}, we get
\[
\int_{U_\delta}
q^{\frac12-s}e^{t\phi(q)}
\,dq
=
e^t
\sqrt{\frac{2\pi}{t}}
\left(1+O(t^{-1})\right).
\]
Therefore
\[
t\int_{U_\delta}
\frac{t^{tq}}{\Gamma(s+tq)}\,dq
=
e^t t^{1-s}
\left(1+O(t^{-1})\right).
\]

Since \(q=1\) is the unique maximum point of \(\phi\), and since
\([\varepsilon,R]\setminus U_\delta\) is compact and does not contain \(1\),
there exists \(c>0\) such that
\[
\phi(q)\le 1-c,
\qquad q\in[\varepsilon,R]\setminus U_\delta.
\]
Using the uniform Stirling expansion on this compact set, we obtain
\[
t\int_{[\varepsilon,R]\setminus U_\delta}
\frac{t^{tq}}{\Gamma(s+tq)}\,dq
\le
C t^{\frac32-s}e^{(1-c)t}.
\]
Hence this contribution is exponentially smaller than the principal term
\(e^t t^{1-s}\), since
\[
\frac{t^{\frac32-s}e^{(1-c)t}}{e^t t^{1-s}}
=
t^{\frac12}e^{-ct}
\to0 .
\]

For the endpoint \(q\to0^+\),  choosing
\(\varepsilon>0\) sufficiently small, Stirling's formula for \(p\ge1\), together
with the elementary boundedness of \(1/\Gamma(s+p)\) on bounded \(p\)-intervals,
gives
\[
t\int_0^\varepsilon
\frac{t^{tq}}{\Gamma(s+tq)}\,dq
=
\int_0^{\varepsilon t}
\frac{t^p}{\Gamma(s+p)}\,dp
=
O\left(e^{(1-c)t}\right)
\]
for some \(0<c<1\). Similarly, since \(\phi(q)\to-\infty\) as \(q\to\infty\),
the tail \(q\ge R\) satisfies
\[
t\int_R^\infty
\frac{t^{tq}}{\Gamma(s+tq)}\,dq
=
O\left(e^{(1-c)t}\right)
\]
for \(R>1+\delta\) sufficiently large. Hence
\[
t\int_{(0,\infty)\setminus U_\delta}
\frac{t^{tq}}{\Gamma(s+tq)}\,dq
=
O\left(e^{(1-c)t}\right)
=
o\left(e^t t^{1-s-1}\right).
\]
Combining this with the contribution from \(U_\delta\), we obtain
\[
H_s(t)
=
e^t t^{1-s}
\left(1+O(t^{-1})\right),
\qquad t\to\infty,
\]
which complete the proof.
\end{proof}

We now derive the sharp near-origin asymptotics of $K_{s+\ln}^{\lambda}$ by starting
from the integral representation in Proposition~\ref{lem:log-bessel-kernel} and
analyzing the contribution of small spatial scales.

\begin{proof}[\textbf{Proof of Proposition \ref{prop:kernel_asymptotics}.}]
(i) Assume $n>2s$. By Proposition \ref{lem:log-bessel-kernel}, we have
\[
K_{s+\ln}^{\lambda}(x) = \int_0^\infty G_{s+p}^\lambda(x) \,dp.
\]
Using the standard heat kernel representation,  the Bessel kernel admits the representation:
\[
G_\alpha^\lambda(x) = \frac{1}{\Gamma(\alpha)(4\pi)^{\frac{n}{2}}} \int_0^\infty t^{\alpha-1-\frac{n}{2}} e^{-\lambda t - \frac{|x|^2}{4t}} \,dt.
\]
Let $r = |x| > 0$. We split the $p$-integral into a local part and a tail part using a small fixed constant $0 < \delta < \frac{n}{2}-s$:
\[
K_{s+\ln}^{\lambda}(x) = \int_0^\delta G_{s+p}^\lambda(x) \,dp + \int_\delta^\infty G_{s+p}^\lambda(x) \,dp =: I_1 + I_2.
\]
\medskip
\noindent\textbf{Analysis of the main term $I_1$:} 
Substituting $t = r^2 u$ into the heat kernel yields
\[
G_{s+p}^\lambda(x) = \frac{r^{2s+2p-n}}{(4\pi)^{\frac{n}{2}}} \Phi(p, r), \quad \text{where } \Phi(p, r) := \frac{1}{\Gamma(s+p)} \int_0^\infty u^{s+p-1-\frac{n}{2}} e^{-\lambda r^2 u - \frac{1}{4u}} \,du.
\]
Thus, $$I_1 = \frac{r^{2s-n}}{(4\pi)^{\frac{n}{2}}} \int_0^\delta r^{2p} \Phi(p, r) \,dp.$$ 
Let $L = -2\ln r$. As $r \to 0$, $L \to \infty$. We evaluate the integral using the substitution $q = pL$:
\[
\int_0^\delta e^{-pL} \Phi(p, r) \,dp = \frac{1}{L} \int_0^{\delta L} e^{-q} \Phi\Big(\frac{q}{L}, r\Big) \,dq.
\]
Because $s+\delta < \frac{n}{2}$, we can uniformly bound $e^{-\lambda r^2 u} \le 1$, yielding
\[
\Phi(p, r) \le \Phi(p, 0) = \frac{4^{\frac{n}{2}-s-p} \Gamma(\frac{n}{2}-s-p)}{\Gamma(s+p)}.
\]
Since $p \mapsto \Phi(p, 0)$ is strictly positive and continuous on the compact interval $[0, \delta]$, it is bounded by some constant $M < \infty$. Consequently, the integrand is uniformly dominated by $M e^{-q} \in L^1(0, \infty)$. By the Lebesgue Dominated Convergence Theorem, 
\[
\lim_{r \to 0} \int_0^{\delta L} e^{-q} \Phi\Big(\frac{q}{L}, r\Big) \,dq = \int_0^\infty e^{-q} \Phi(0, 0) \,dq = \Phi(0, 0) = \frac{4^{\frac{n}{2}-s} \Gamma(\frac{n}{2}-s)}{\Gamma(s)}.
\]
Therefore, the local part behaves exactly as
\[
\lim_{|x|\rightarrow 0}I_1\frac{\ln \frac{1}{|x|^2}}{|x|^{2s-n}} = \frac{4^{\frac{n}{2}-s} \Gamma(\frac{n}{2}-s)}{(4\pi)^{\frac{n}{2}}\Gamma(s)}=\frac{\Gamma(\frac{n}{2}-s)}{\pi^{n/2}2^{2s}\Gamma(s)}.
\]

\medskip
\noindent\textbf{Analysis of the tail error $I_2$:}
We estimate $I_2$ directly from the heat kernel representation. By Tonelli's theorem, we exchange the $p$ and $t$ integrals:
\[
I_2 = \int_\delta^\infty G_{s+p}^\lambda(x) \,dp = \frac{1}{(4\pi)^{\frac{n}{2}}} \int_0^\infty \left( \int_\delta^\infty \frac{t^{p}}{\Gamma(s+p)} \,dp \right) t^{s-1-\frac{n}{2}} e^{-\lambda t - \frac{r^2}{4t}} \,dt.
\]
Let $$H(t) := \int_\delta^\infty \frac{t^p}{\Gamma(s+p)} \,dp.$$ 
We bound $H(t)$ in two regimes:

1) For $0 < t \le 1$, since $p \ge \delta$, we have $t^p \le t^\delta$. Thus $H(t) \le t^\delta \int_\delta^\infty \frac{1}{\Gamma(s+p)} \,dp \le C_1 t^\delta$.

2) For $t > 1$, by Lemma \ref{lem:Hs-large-time}, $H(t) \le  C_2 t^{1-s} e^t$.

Splitting the $t$-integral at $t=1$, we obtain
\[
I_2 \lesssim \int_0^1 t^{s+\delta-1-\frac{n}{2}} e^{-\frac{r^2}{4t}} \,dt + \int_1^\infty t^{-\frac{n}{2}} e^{-(\lambda-1)t} \,dt.
\]
Because $\lambda > 1$, the second integral converges to a finite constant. For the first integral, substituting $t = r^2 u$ bounds it by $$r^{2s+2\delta-n} \int_0^\infty u^{s+\delta-1-\frac{n}{2}} e^{-\frac{1}{4u}} \,du = O(r^{2s+2\delta-n}).$$
Hence, $I_2 = O(r^{2s+2\delta-n}) + O(1)$. Since $\delta > 0$ and $n>2s$, the ratio
\[
\frac{r^{2s+2\delta-n}+1}{r^{2s-n}/(-\ln r)} \to 0 \quad \text{as } r \to 0.
\]
Thus, we complete the proof.

\medskip
(ii) Assume $n=2s$. We exchange the order of integration:
\[
K_{\frac{n}{2}+\ln}^{\lambda}(x) = \frac{1}{(4\pi)^{\frac{n}{2}}} \int_0^\infty t^{-1} e^{-\lambda t - \frac{|x|^2}{4t}} H(t) \,dt, \quad \text{where } H(t) := \int_0^\infty \frac{t^p}{\Gamma(\frac{n}{2}+p)} \,dp.
\]
For $t\in(0,1/2]$, set $L:=-\ln t>0$ and write
\[
H(t)=\int_{0}^{\infty} \frac{t^{p}}{\Gamma(\frac n2+p)}\,dp
=\int_{0}^{\infty} e^{-Lp}\,f(p)\,dp,
\qquad
f(p):=\frac{1}{\Gamma(\frac n2+p)}.
\]
Since $1/\Gamma$ is entire, $f$ is $C^\infty$.
We split the integral into $[0,1]$ and $[1,\infty)$.

On $[0,1]$, by Taylor's theorem there exists $C>0$ such that
\[
f(p)=f(0)+f'(0)p+R(p),\qquad |R(p)|\le Cp^{2}\quad \text{for all }p\in[0,1].
\]
Therefore,
\[
\int_{0}^{1} e^{-Lp}f(p)\,dp
=f(0)\int_{0}^{1} e^{-Lp}\,dp
+f'(0)\int_{0}^{1} p e^{-Lp}\,dp
+O\!\left(\int_{0}^{1} p^{2}e^{-Lp}\,dp\right)
=\frac{f(0)}{L}+O\!\left(\frac{1}{L^{2}}\right),
\]
where we used
\[
\int_{0}^{1} e^{-Lp}\,dp=\frac{1-e^{-L}}{L}=\frac{1}{L}+O(e^{-L}),\quad
\int_{0}^{1} p e^{-Lp}\,dp=O\!\left(\frac{1}{L^{2}}\right),\quad
\int_{0}^{1} p^{2} e^{-Lp}\,dp=O\!\left(\frac{1}{L^{3}}\right).
\]

For the tail, since \(0<t\le \frac12\) and \(p\ge1\), we have
\(t^p\le t\). Moreover,
\[
\int_1^\infty \frac{1}{\Gamma(\frac n2+p)}\,dp<\infty,
\]
Hence
\[
0\le
\int_1^\infty
\frac{t^p}{\Gamma(\frac n2+p)}\,dp
\le
t
\int_1^\infty
\frac{1}{\Gamma(\frac n2+p)}\,dp
\le C t
=
O(e^{-L}),
\]
which is $o(L^{-2})$ as $t\rightarrow 0$. Combining the two parts, we conclude that, as $t\rightarrow 0$,
\begin{equation}\label{eq:asym}
    H(t)=\frac{1}{\Gamma(\frac n2)(-\ln t)}+O\!\left(\frac{1}{(-\ln t)^{2}}\right).
\end{equation}

Fix $\delta\in(0,1/2)$. We split the $t$-integral into a local part
$(0,\delta]$ and a tail part $(\delta,\infty)$.
For the tail, using the bound $H(t)\le C_\delta t^{1-\frac{n}{2}} e^{t}$ for $t\ge\delta$, which has been proved in Lemma \ref{lem:Hs-large-time}, we obtain 
\[
\int_\delta^\infty t^{-1} e^{-\lambda t-\frac{r^2}{4t}} H(t)\,dt
\le C_\delta \int_\delta^\infty t^{-\frac{n}{2}} e^{-(\lambda-1)t}\,dt
=O(1),
\quad r>0.
\]
On the local interval $(0,\delta]$, we use the asymptotic expansion \eqref{eq:asym},
which yields
\[
K_{\frac n2+\ln}^{\lambda}(x)
=\frac{1}{(4\pi)^{\frac n2}\Gamma(\frac n2)}
\int_0^\delta t^{-1} e^{-\lambda t-\frac{r^2}{4t}}\frac{1}{-\ln t}\,dt
+O(1),
\quad r>0.
\]
Since $e^{-\lambda t}=1+O(t)$ as $t\downarrow 0$,
\[
\int_0^\delta t^{-1} e^{-\frac{r^2}{4t}}\frac{O(t)}{-\ln t}\,dt
\le C\int_0^\delta e^{-\frac{r^2}{4t}}\frac{1}{-\ln t}\,dt
\le C\int_0^\delta \frac{1}{-\ln t}\,dt<\infty,
\]
which can be absorbed into the $O(1)$ term. 
Therefore it remains to analyze
\[
I(r):=\int_0^\delta t^{-1} e^{-\frac{r^2}{4t}}\frac{1}{-\ln t}\,dt,
\qquad 0<\delta<1.
\]
Scaling \(t=r^2u\), we obtain
\[
I(r)
=
\int_0^{\delta/r^2}
\frac{e^{-\frac1{4u}}}
{u(-2\ln r-\ln u)}
\,du.
\]

We split the integral into \((0,1)\) and \((1,\delta/r^2)\). For \(u\in(0,1)\),
we have
\[
\int_0^1
\frac{e^{-\frac1{4u}}}
{u(-2\ln r-\ln u)}
\,du
\le
\frac1{-2\ln r}
\int_0^1 u^{-1}e^{-\frac1{4u}}\,du
=
O\!\left(\frac1{-\ln r}\right).
\]

For \(u\in(1,\delta/r^2)\), we write
\[
e^{-\frac1{4u}}
=
1-\left(1-e^{-\frac1{4u}}\right).
\]
The error term is bounded. Indeed, since $1-e^{-\frac1{4u}}\le \frac1{4u},$
and
\[
-2\ln r-\ln u
=
-\ln(r^2u)
\ge
-\ln\delta>0
\qquad
\text{for }1<u<\frac{\delta}{r^2},
\]
we get
\[
\int_1^{\delta/r^2}
\frac{1-e^{-\frac1{4u}}}
{u(-2\ln r-\ln u)}
\,du
\le
C_\delta
\int_1^\infty \frac{du}{u^2}
=
O(1).
\]
Hence
\[
I(r)
=
\int_1^{\delta/r^2}
\frac{du}{u(-2\ln r-\ln u)}
+
O(1).
\]

The remaining integral can be computed explicitly:
\[
\int_1^{\delta/r^2}
\frac{du}{u(-2\ln r-\ln u)}
=
\ln(-2\ln r)-\ln(-\ln\delta)
=
\ln(-\ln r)+O(1).
\]
Thus
\[
I(r)=\ln(-\ln r)+O(1),
\qquad r\downarrow0.
\]

Reintroducing the prefactor, we achieve the precise double-logarithmic asymptotic behavior:
\[
K_{\frac{n}{2}+\ln}^{\lambda}(x) \sim \frac{1}{(4\pi)^{\frac{n}{2}} \Gamma(\frac{n}{2})} \ln(-\ln |x|)  \quad \text{as } |x| \to 0.
\]

\medskip
(iii) Assume $n<2s$. By Tonelli's Theorem:
\[
K_{s+\ln}^{\lambda}(x) = \frac{1}{(4\pi)^{\frac{n}{2}}} \int_0^\infty \int_0^\infty \frac{t^{s+p-1-\frac{n}{2}}}{\Gamma(s+p)} e^{-\lambda t} e^{-\frac{|x|^2}{4t}} \, dp \, dt.
\]
As $|x| \to 0$, the function $e^{-\frac{|x|^2}{4t}}$ increases monotonically to $1$ for every $t > 0$. By the Monotone Convergence Theorem, we can pass the limit inside the integrals:
\[
\lim_{|x| \to 0} K_{s+\ln}^{\lambda}(x) = \frac{1}{(4\pi)^{\frac{n}{2}}} \int_0^\infty \left( \int_0^\infty \frac{t^{s+p-1-\frac{n}{2}}}{\Gamma(s+p)} e^{-\lambda t} \,dt \right) dp.
\]
Evaluating the inner Gamma integral yields $\lambda^{-(s+p-\frac{n}{2})} \Gamma(s+p-\frac{n}{2})$. Thus,
\[
K_{s+\ln}^{\lambda}(0) = \frac{1}{(4\pi)^{\frac{n}{2}}} \int_0^\infty \frac{\Gamma(s+p-\frac{n}{2})}{\Gamma(s+p)} \lambda^{\frac{n}{2}-s-p} \,dp.
\]
Since $\frac{\Gamma(p+A)}{\Gamma(p+B)} \sim p^{A-B}$ as $p \to \infty$, the integrand decays exponentially as $\lambda^{-p}$ since $\lambda>1$. Therefore, the limit is a strictly finite positive constant, confirming continuity at the origin.

This completes the rigorous characterization for all three regimes.
\end{proof}

Next, we prove the far-field asymptotic.

\begin{proof}[\textbf{Proof of Proposition \ref{lem:Klog-asymp-infty-final}.}]
Set $r:=|x|$ and $\mu:=\lambda-1>0$. Define
\[
H_s(t):=\int_0^\infty \frac{t^p}{\Gamma(s+p)}\,dp,
\qquad t>0.
\]
By Lemma \ref{lem:Hs-large-time}, we have
\[
H_s(t)
=
e^t t^{1-s}\left(1+O(t^{-1})\right),
\qquad t\to\infty.
\]
We shall also use the rough bound
\[
H_s(t)
\le
C_s e^t(1+t)^M,
\qquad t>0,
\]
for some constants \(C_s>0\) and \(M>0\).

Starting from the integral representation, we make the change of variables $t=r\tau.$
Then 
\begin{align}
K_{s+\ln}^{\lambda}(x)
&=
\frac{1}{(4\pi)^{\frac n2}}
r^{s-\frac n2}
\int_0^\infty
H_s(r\tau)\,
\tau^{s-1-\frac n2}
\exp\left(-\lambda r\tau-\frac{r}{4\tau}\right)
\,d\tau .
\label{eq:K-after-scaling}
\end{align}

The phase function is
\[
\phi(\tau)
:=
\mu\tau+\frac{1}{4\tau},
\qquad \tau>0.
\]
Then
\[
\phi'(\tau)
=
\mu-\frac{1}{4\tau^2},\quad \phi''(\tau)
=
\frac{1}{2\tau^3}>0.
\]
Hence \(\phi\) has a unique global minimum point $\tau_0
=
\frac{1}{2\sqrt{\mu}}.$ At this point, $$\phi(\tau_0)
=
\sqrt{\mu},\quad \phi''(\tau_0)
=
4\mu^{3/2}$$
Thus the leading contribution comes from $t=r\tau_0
=
\frac{r}{2\sqrt{\mu}},$
which is a large-time region \(t\sim r\).

Let \(U\Subset(0,\infty)\) be a fixed open interval containing \(\tau_0\), the large-time asymptotic of \(H_s\) gives, uniformly for \(\tau\in U\),
\[
H_s(r\tau)
=
e^{r\tau}(r\tau)^{1-s}
\left(1+O(r^{-1})\right),
\qquad r\to\infty.
\]
Therefore the contribution from \(U\), denoted by \(K_U(r)\), satisfies
\begin{align}
K_U(r)
&=
\frac{1}{(4\pi)^{\frac n2}}
r^{s-\frac n2}
\int_U
H_s(r\tau)\,
\tau^{s-1-\frac n2}
\exp\left(-\lambda r\tau-\frac{r}{4\tau}\right)
\,d\tau
\nonumber\\
&=
\frac{1}{(4\pi)^{\frac n2}}
r^{s-\frac n2}
\int_U
e^{r\tau}(r\tau)^{1-s}
\tau^{s-1-\frac n2}
\exp\left(-\lambda r\tau-\frac{r}{4\tau}\right)
\left(1+O(r^{-1})\right)
\,d\tau
\nonumber\\
&=
\frac{1}{(4\pi)^{\frac n2}}
r^{1-\frac n2}
\int_U
\tau^{-\frac n2}
e^{-r\phi(\tau)}
\left(1+O(r^{-1})\right)
\,d\tau .
\label{eq:local-main}
\end{align}

Next we show that the contribution outside \(U\) is exponentially smaller. Since \(\phi\) has a unique strict global minimum at \(\tau_0\), after choosing \(U\) sufficiently small, there exists \(\eta>0\) such that
\[
\phi(\tau)
\ge
\sqrt{\mu}+\eta,
\qquad
\tau\in (0,\infty)\setminus U.
\]
Using the rough bound for \(H_s\), we obtain
\[
H_s(r\tau)e^{-\lambda r\tau}
\le
C_s(1+r\tau)^M e^{-\mu r\tau}.
\]
Consequently,
\[
\left|
K_{(0,\infty)\setminus U}(r)
\right|
\le
C r^{s-\frac n2}
\int_{(0,\infty)\setminus U}
(1+r\tau)^M
\tau^{s-1-\frac n2}
e^{-r\phi(\tau)}
\,d\tau .
\]
By the standard localization estimate for Laplace integrals, this gives
\[
K_{(0,\infty)\setminus U}(r)
=
O\left(r^{M_1}e^{-(\sqrt{\mu}+\eta)r}\right)
\]
for some \(M_1>0\). In particular,
\[
K_{(0,\infty)\setminus U}(r)
=
o\left(
r^{-\frac{n-1}{2}}e^{-\sqrt{\mu}r}
\right).
\]
Thus the leading asymptotic is determined by \(K_U(r)\).

Applying the standard Laplace method to (see \cite[Chapter 2]{murray2012asymptotic})
\[
\int_U
\tau^{-\frac n2}
e^{-r\phi(\tau)}
\,d\tau,
\]
we obtain
\[
\int_U
\tau^{-\frac n2}
e^{-r\phi(\tau)}
\,d\tau
\sim
\tau_0^{-\frac n2}
e^{-r\phi(\tau_0)}
\sqrt{\frac{2\pi}{r\phi''(\tau_0)}}.
\]
Since
\[
\tau_0
=
\frac{1}{2\sqrt{\mu}},
\qquad
\phi(\tau_0)
=
\sqrt{\mu},
\qquad
\phi''(\tau_0)
=
4\mu^{3/2},
\]
we have
\[
\tau_0^{-\frac n2}
=
2^{\frac n2}\mu^{\frac n4},
\]
and
\[
\sqrt{\frac{2\pi}{r\phi''(\tau_0)}}
=
\sqrt{\frac{2\pi}{4\mu^{3/2}r}}
=
\sqrt{\frac{\pi}{2}}\,
\mu^{-\frac34}r^{-\frac12}.
\]
Hence
\[
\int_U
\tau^{-\frac n2}
e^{-r\phi(\tau)}
\,d\tau
\sim
2^{\frac{n-1}{2}}
\sqrt{\pi}\,
\mu^{\frac{n-3}{4}}
r^{-\frac12}
e^{-\sqrt{\mu}r}.
\]
Substituting this into \eqref{eq:local-main}, we get
\begin{align*}
K_U(r)
&\sim
\frac{1}{(4\pi)^{\frac n2}}
r^{1-\frac n2}
\cdot
2^{\frac{n-1}{2}}
\sqrt{\pi}\,
\mu^{\frac{n-3}{4}}
r^{-\frac12}
e^{-\sqrt{\mu}r}
\\
&=
\frac{
\mu^{\frac{n-3}{4}}
}{
2^{\frac{n+1}{2}}\pi^{\frac{n-1}{2}}
}
r^{-\frac{n-1}{2}}
e^{-\sqrt{\mu}r}.
\end{align*}
Since the contribution outside \(U\) is exponentially smaller, we obtain
\[
K_{s+\ln}^{\lambda}(x)
\sim
\frac{
\mu^{\frac{n-3}{4}}
}{
2^{\frac{n+1}{2}}\pi^{\frac{n-1}{2}}
}
r^{-\frac{n-1}{2}}
e^{-\sqrt{\mu}r}.
\]
Finally, recalling that
\[
\mu=\lambda-1,
\qquad
r=|x|,
\]
we conclude that
\[
K_{s+\ln}^{\lambda}(x)
\sim
\frac{
(\lambda-1)^{\frac{n-3}{4}}
}{
2^{\frac{n+1}{2}}\pi^{\frac{n-1}{2}}
}
|x|^{-\frac{n-1}{2}}
e^{-\sqrt{\lambda-1}\,|x|}.
\]
This completes the proof.
\end{proof}

We highlight a striking analytical feature of the logarithmic modifier. In the classical fractional framework, the Riesz potential operator $I_{2s}f := |\cdot|^{2s-n} * f$ maps $L^p(\R^n)$ continuously into the critical Sobolev space $L^{p^*}(\R^n)$. This boundedness relies heavily on the deep Hardy-Littlewood-Sobolev (HLS) inequality, precisely because the Riesz kernel $|\cdot|^{2s-n}$ fails to belong to the Lebesgue space $L^r(\R^n)$ for the matching exponent $r = \frac{n}{n-2s}$ (it only resides in the weak Lorentz space $L^{r,\infty}(\R^n)$). 

However, the effect of the logarithmic term in our operator strictly pushes the singular kernel into the standard $L^r(\R^n)$ space. Consequently, the highly non-trivial HLS machinery can be completely bypassed and replaced by the elementary Young's convolution inequality. The following lemma rigorously establishes this critical integrability.

\begin{lemma}
\label{lem:kernel_Lr}
Let $n>2s$ and $\lambda > 1$. The logarithmic fractional kernel $K_{s+\ln}^\lambda$ is globally integrable to the critical power $r = \frac{n}{n-2s}$. Consequently, for any $1 < p < \frac{n}{2s}$ and the critical Sobolev exponent $p^* = \frac{np}{n-2sp}$, the convolution operator $f \mapsto K_{s+\ln}^\lambda * f$ maps $L^p(\R^n)$ continuously into $L^{p^*}(\R^n)$ with the strong norm estimate:
\[
\|K_{s+\ln}^\lambda * f\|_{L^{p^*}(\R^n)} \le \|K_{s+\ln}^\lambda\|_{L^r(\R^n)} \|f\|_{L^p(\R^n)}.
\]
\end{lemma}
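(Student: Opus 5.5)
The plan is to show $K_{s+\ln}^\lambda\in L^r(\mathbb R^n)$ with $r=\frac{n}{n-2s}$, and then deduce the mapping property from Young's convolution inequality. Throughout we use that $K_{s+\ln}^\lambda\ge0$ and that it is finite away from the origin, both of which follow from Proposition~\ref{lem:log-bessel-kernel}. We split $\mathbb R^n$ into the ball $B_{\rho}$, the annulus $\rho\le|x|\le R$, and the exterior $|x|\ge R$, with $\rho$ small and $R$ large chosen below.

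\emph{Near the origin.} By Proposition~\ref{prop:kernel_asymptotics}(i), there is $\rho\in(0,1/2)$ such that
\[
K_{s+\ln}^\lambda(x)\le C\,|x|^{2s-n}\bigl(\ln\tfrac{1}{|x|^2}\bigr)^{-1}
\qquad\text{for } 0<|x|<\rho .
\]
Raising to the power $r$ and using $(2s-n)r=-n$ gives
\[
\int_{B_\rho}\bigl(K_{s+\ln}^\lambda\bigr)^r\,dx\le C\int_0^\rho \frac{dt}{t\,(2\ln(1/t))^{r}} .
\]
The substitution $v=\ln(1/t)$ turns this into $\int_{\ln(1/\rho)}^\infty v^{-r}\,dv$. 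Since $n>2s$, we have $r>1$, so this integral is finite. This is the decisive step: the logarithmic factor converts the borderline divergent integral $\int dt/t$ into a convergent one. It is also the only place where the exponent $r$ is critical. I expect it to be the main point needing care, namely choosing $\rho$ so that the limit in Proposition~\ref{prop:kernel_asymptotics}(i) yields the pointwise bound uniformly on $B_\rho$.

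\emph{At infinity.} By Proposition~\ref{lem:Klog-asymp-infty-final}, there is $R$ such that
\[
K_{s+\ln}^\lambda(x)\le C|x|^{-\frac{n-1}{2}}e^{-\sqrt{\lambda-1}|x|}\qquad\text{for } |x|\ge R,
\]
and this bound is clearly in $L^r(\{|x|\ge R\})$.

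\emph{On the annulus.} The kernel is bounded on $\{\rho\le|x|\le R\}$. To see this, use the heat-kernel formula \eqref{eq:K-log-heat} together with the bound $H_s(t)\le Ce^t(1+t)^M$ from Lemma~\ref{lem:Hs-large-time}. For small $t$ use $H_s(t)\le C$. The factor $e^{-\rho^2/(4t)}$ then kills the small-$t$ singularity, and $e^{-(\lambda-1)t}$ controls large $t$. Alternatively, one can combine continuity away from the origin with the two asymptotics above. Either way, the annulus contributes a finite amount, so $K_{s+\ln}^\lambda\in L^r(\mathbb R^n)$.

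\emph{Young's inequality.} For $1<p<\frac n{2s}$, Young's inequality with exponents $p$, $r$ and $q$ requires $\frac1p+\frac1r=1+\frac1q$. Substituting $\frac1r=1-\frac{2s}{n}$ gives $\frac1q=\frac1p-\frac{2s}n$, so $q=p^*$. The condition $p<\frac n{2s}$ ensures $q\in(p,\infty)$. Young's inequality then gives
\[
\|K_{s+\ln}^\lambda*f\|_{L^{p^*}}\le\|K_{s+\ln}^\lambda\|_{L^r}\|f\|_{L^p}
\]
with constant $1$, which is the stated estimate and yields continuity.
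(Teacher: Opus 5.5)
Your proof is correct and follows the same route as the paper. The near-origin bound from Proposition~\ref{prop:kernel_asymptotics}(i) reduces the critical integral to $\int_{\ln(1/\rho)}^\infty v^{-r}\,dv$ with $r>1$, the exponential decay from Proposition~\ref{lem:Klog-asymp-infty-final} handles infinity, and Young's inequality with $\frac1p+\frac1r=1+\frac1{p^*}$ gives the estimate. The paper passes silently over the intermediate annulus $\rho\le|x|\le R$; your boundedness argument there, via the heat-kernel representation and the bounds on $H_s$, makes your version slightly more complete.
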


\begin{proof}
 For the local integrability around the origin, we rely on the precise asymptotic behavior established in Proposition~\ref{prop:kernel_asymptotics}. Fix a sufficiently small radius $R_0 \in (0, 1/2)$, we have
\[
\int_{B_{R_0}(0)} \big| K_{s+\ln}^\lambda(x) \big|^r \,dx \le C \int_{|x| \le R_0} \frac{|x|^{(2s-n)r}}{(-\ln|x|)^r} \,dx.
\]
Substituting the critical exponent $r = \frac{n}{n-2s}>1$, the integral becomes
\[
C |\mathbb{S}|^{n-1} \int_0^{R_0} \frac{\rho^{-n}}{(-\ln \rho)^r} \rho^{n-1} \,d\rho =C|\mathbb{S}|^{n-1}  \int_{-\ln R_0}^\infty u^{-r} \,du<\infty.
\]

For the behavior at infinity, by Proposition \ref{lem:Klog-asymp-infty-final}, the kernel $K_{s+\ln}^\lambda(x)$ exhibits an exponential decay $O(|x|^{\frac{1-n}{2}}e^{-\sqrt{\lambda-1}|x|})$ for large $|x|$, it is absolutely integrable to any power $r \ge 1$ outside the ball $B_{R_0}(0)$. Combining both regions yields $K_{s+\ln}^\lambda \in L^r(\R^n)$.

Finally, the strong mapping property follows immediately from Young's inequality  (c.f. \cite[Theorem 1.2.12]{grafakos2008classical})  for convolutions since
\[
1 + \frac{1}{p^*} = 1 + \left( \frac{1}{p} - \frac{2s}{n} \right) = \frac{1}{p} + \left( 1 - \frac{2s}{n} \right) = \frac{1}{p} + \frac{1}{r},
\]
which complete the proof.
\end{proof}

    
\subsection{Logarithmic Riesz Potential}

We recall that
\[
\cS_\infty(\R^n):=\Bigl\{\varphi\in\cS(\R^n):\ \int_{\R^n}x^\gamma\varphi(x)\,dx=0,\ \forall\gamma\in\N_0^n\Bigr\},
\]
and
\[
\cS_0(\R^n):=\Bigl\{\psi\in\cS(\R^n):\ \partial^\gamma\psi(0)=0,\ \forall\gamma\in\N_0^n\Bigr\}.
\]
Then $\cF:\cS_\infty'\to\cS_0'$ is an isomorphism, and  $\cF^{-1}:\cS_0'\to\cS_\infty'$ is an isomorphism as well.

Fix $s>0$ and set
\[
m_{s+\ln}(\xi):=(4\pi^2|\xi|^2)^s\ln(4\pi^2|\xi|^2),\qquad \xi\in\R^n\setminus\{0\},
\qquad m_{s+\ln}(0):=0.
\]
Then $m_{s+\ln}\in C^\infty(\R^n\setminus\{0\})$ and has at most polynomial growth at infinity, while it has a
logarithmic singularity at $\xi=0$. Since every $\psi\in\cS_0(\R^n)$ vanishes to infinite order at $\xi=0$,
the product $m_{s+\ln}\psi$ extends (by setting it equal to $0$ at $\xi=0$) to a function in $\cS_0(\R^n)$.
Consequently, the multiplication operator
\[
M_{s+\ln}:\cS_0(\R^n)\to\cS_0(\R^n),\qquad \psi\mapsto m_{s+\ln}\psi,
\]
is well-defined and continuous.

We begin with a Bernstein-type estimate on dyadic annuli, which allows us to
control the $L^1$-norm of an inverse Fourier transform in terms of uniform bounds on
finitely many derivatives.

\begin{lemma}
\label{lem:bernstein_annulus}
Let $n\ge1$ and let $N>n$ be an integer. Then there exists a constant
$C=C(n,N)>0$ such that for every $j\in\Z$ and every $h\in C^N(\R^n)$ satisfying
\[
\operatorname{supp} h \subset \{\xi\in\R^n:2^{j-1}\le |\xi|\le 2^{j+1}\},
\]
one has
\begin{equation}\label{eq:bernstein_annulus_lemma}
\|\cF^{-1}h\|_{L^1(\R^n)}
\le C\sum_{|\alpha|\le N} 2^{j|\alpha|}\,\|\partial^\alpha h\|_{L^\infty(\R^n)}.
\end{equation}
\end{lemma}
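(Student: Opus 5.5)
We reduce to the unit scale $j=0$ by dilation and then prove the $j=0$ case by integration by parts against a weight.

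\emph{Scaling.} Given $h$ supported in $\{2^{j-1}\le|\xi|\le 2^{j+1}\}$, set $h_0(\xi):=h(2^j\xi)$, which is supported in the fixed annulus $A:=\{1/2\le|\xi|\le 2\}$. A change of variables gives
\[
\cF^{-1}h(x)=2^{jn}\,(\cF^{-1}h_0)(2^jx),
\]
so $\|\cF^{-1}h\|_{L^1}=\|\cF^{-1}h_0\|_{L^1}$. Moreover $\partial^\alpha h_0(\xi)=2^{j|\alpha|}(\partial^\alpha h)(2^j\xi)$, hence $\|\partial^\alpha h_0\|_{L^\infty}=2^{j|\alpha|}\|\partial^\alpha h\|_{L^\infty}$. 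It therefore suffices to prove
\[
\|\cF^{-1}h_0\|_{L^1}\le C\sum_{|\alpha|\le N}\|\partial^\alpha h_0\|_{L^\infty}
\]
for $h_0\in C^N$ supported in $A$.

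\emph{Unit scale.} Write $g:=\cF^{-1}h_0$. First, trivially $|g(x)|\le\int_A|h_0|\,d\xi\le |A|\,\|h_0\|_{L^\infty}$. Second, for each coordinate $k$, integrate by parts $N$ times in $\xi_k$ in the formula $g(x)=\int h_0(\xi)e^{2\pi i x\cdot\xi}\,d\xi$. There are no boundary terms, because $h_0\in C^N$ has compact support in $A$, so all derivatives of order at most $N$ vanish on $\partial A$. This yields
\[
(-2\pi i x_k)^N g(x)=\int \partial_k^N h_0(\xi)\,e^{2\pi i x\cdot\xi}\,d\xi,
\]
and hence $|x_k|^N|g(x)|\le C\,|A|\,\|\partial_k^N h_0\|_{L^\infty}$. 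Summing over $k$ and using $|x|^N\le n^{N}\max_k|x_k|^N$, we get
\[
(1+|x|)^N|g(x)|\le C\sum_{|\alpha|\le N}\|\partial^\alpha h_0\|_{L^\infty}.
\]
Since $N>n$, $(1+|x|)^{-N}\in L^1(\R^n)$, and integrating this bound gives the claim with $C=C(n,N)$.

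\emph{Main subtlety.} The only delicate point is justifying the integration by parts when $h_0$ is merely $C^N$ rather than smooth. This is handled by applying Fubini in the remaining variables and the one-dimensional integration by parts formula for $C^N$ functions with compact support. No mollification is needed, since $\partial_k^N h_0$ is continuous and compactly supported and hence bounded.
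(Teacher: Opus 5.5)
Your proof is correct and follows essentially the same route as the paper: you rescale to the unit annulus, then combine the trivial sup bound with $N$-fold integration by parts to get $|x|^{-N}$ decay, which is integrable since $N>n$. The differences are cosmetic. You integrate by parts coordinatewise with $\partial_k^N$ where the paper uses the directional operator $x\cdot\nabla_\xi$, and you merge the two pointwise bounds into a single weight $(1+|x|)^{-N}$ instead of splitting at $|x|=1$.
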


\begin{proof}
Fix $j\in\Z$ and such an $h$. Define the rescaled function
\[
H(\eta):=h(2^j\eta),\qquad \eta\in\R^n.
\]
Then $\operatorname{supp} H\subset\{\eta:2^{-1}\le|\eta|\le 2\}$, and by the change of
variables $\xi=2^j\eta$, we obtain 
\[\cF^{-1}h(x)
=\int_{\R^n} e^{2\pi i x\cdot\xi}h(\xi)\,d\xi
=2^{jn}\int_{\R^n} e^{2\pi i (2^j x)\cdot\eta}H(\eta)\,d\eta
=2^{jn}\,(\cF^{-1}H)(2^j x).\]
Taking $L^1$-norms and using the change of variables $y=2^j x$ yields
\begin{equation}\label{eq:L1_scaling}
\|\cF^{-1}h\|_{L^1}
=\int_{\R^n} 2^{jn}\,|(\cF^{-1}H)(2^j x)|\,dx
=\int_{\R^n} |(\cF^{-1}H)(y)|\,dy
=\|\cF^{-1}H\|_{L^1}.
\end{equation}

\medskip
\medskip
\noindent\textbf{Step 1: A pointwise decay estimate for \(\mathcal F^{-1}H\).}
Let \(g:=\mathcal F^{-1}H\). For \(x\neq0\), we use the identity
\[
e^{2\pi i x\cdot\eta}
=
\frac{1}{2\pi i |x|^2}
(x\cdot\nabla_\eta)e^{2\pi i x\cdot\eta}.
\]
Iterating this identity \(N\) times and integrating by parts, we get
\[
g(x)
=
\frac{(-1)^N}{(2\pi i |x|^2)^N}
\int_{\mathbb R^n}
e^{2\pi i x\cdot\eta}
(x\cdot\nabla_\eta)^N H(\eta)\,d\eta .
\]
Since
\[
(x\cdot\nabla_\eta)^N H
=
\sum_{|\alpha|=N} c_\alpha x^\alpha \partial^\alpha H,
\]
we have
\[
|(x\cdot\nabla_\eta)^N H(\eta)|
\le
C_N |x|^N
\sum_{|\alpha|=N}|\partial^\alpha H(\eta)|.
\]
Therefore
\[
|g(x)|
\le
C_N |x|^{-N}
\sum_{|\alpha|=N}\|\partial^\alpha H\|_{L^1}.
\]
Since
\[
\operatorname{supp}H\subset\{\eta:2^{-1}\le |\eta|\le2\},
\]
this support has finite measure, and hence
\[
\|\partial^\alpha H\|_{L^1}
\le
C\|\partial^\alpha H\|_{L^\infty}.
\]
Thus
\begin{equation}\label{eq:pointwise_decay}
    |g(x)|
\le
C |x|^{-N}
\sum_{|\alpha|=N}\|\partial^\alpha H\|_{L^\infty},
\qquad x\neq0.
\end{equation}
Also trivially $|g(x)|\le \|H\|_{L^1}\le C\|H\|_{L^\infty}$.

\medskip
\noindent\textbf{Step 2: Integrate the pointwise bounds.}
Split $\R^n$ into $\{|x|\le1\}$ and $\{|x|>1\}$. Using the trivial bound on the unit
ball,
\[
\int_{|x|\le1}|g(x)|\,dx
\le |B(0,1)|\,\|H\|_{L^1}
\le C\,\|H\|_{L^\infty}.
\]
For $|x|>1$, use \eqref{eq:pointwise_decay} and the assumption $N>n$:
\[
\int_{|x|>1}|g(x)|\,dx
\le C\sum_{|\alpha|=N}\|\partial^\alpha H\|_{L^\infty}\int_{|x|>1}|x|^{-N}\,dx
\le C\,\sum_{|\alpha|=N}\|\partial^\alpha H\|_{L^\infty}.
\]
Combining these gives
\begin{equation}\label{eq:L1_unit_annulus}
\|\cF^{-1}H\|_{L^1}
\le C\sum_{|\alpha|\le N}\|\partial^\alpha H\|_{L^\infty}.
\end{equation}

\medskip
\noindent\textbf{Step 3: Scale back to general $j$.}
By definition $H(\eta)=h(2^j\eta)$, so for any multi-index $\alpha$,
\[
\partial^\alpha H(\eta)=2^{j|\alpha|}\,(\partial^\alpha h)(2^j\eta),
\qquad\text{hence}\qquad
\|\partial^\alpha H\|_{L^\infty}=2^{j|\alpha|}\,\|\partial^\alpha h\|_{L^\infty}.
\]
Insert this into \eqref{eq:L1_unit_annulus} and use \eqref{eq:L1_scaling}:
\[
\|\cF^{-1}h\|_{L^1}
=\|\cF^{-1}H\|_{L^1}
\le C\sum_{|\alpha|\le N} 2^{j|\alpha|}\,\|\partial^\alpha h\|_{L^\infty}.
\]
This is exactly \eqref{eq:bernstein_annulus_lemma}.
\end{proof}

\begin{proof}[\textbf{Proof of Proposition \ref{prop:bridge-measures}.}]

\textbf{(i).} Set $f(\xi):=m(\xi)-1\in L^{\infty}(\R^n)$.

\medskip
\noindent\textbf{Step 1. Dyadic partition.}
Fix a radial function $\varphi\in C_c^\infty(\R^n)$ such that
\[
\operatorname{supp}\varphi\subset\Big\{\tfrac12\le|\xi|\le2\Big\},\qquad
\sum_{j\in\Z}\varphi(2^{-j}\xi)=1\quad\text{for }\xi\neq0.
\]
Set $\varphi_j(\xi):=\varphi(2^{-j}\xi)$. Then
$\operatorname{supp}\varphi_j\subset\{\xi:2^{j-1}\le|\xi|\le2^{j+1}\}$, $\varphi_j(0)=0$, and for every multi-index
$\alpha$,
$\|\partial^\alpha\varphi_j\|_{L^\infty}\lesssim 2^{-j|\alpha|}$.
Define the low-frequency cut-off
\[
\chi(\xi):=\sum_{j\le0}\varphi_j(\xi),\,\xi\ne 0;\quad \chi(0)=0.
\]
Then $\chi\in C_c^\infty(\R^n)$, and one has
$\chi(\xi)=1$ for $|\xi|\le \frac12$ and $\operatorname{supp}\chi\subset\{|\xi|\le2\}$.
Consequently,
\[
f(\xi)=f(\xi)\chi(\xi)+\sum_{j\ge1} f_j(\xi),
\quad f_j(\xi):=\varphi_j(\xi)f(\xi)\,\quad \xi\ne 0.
\]

\noindent\textbf{Step 2. Symbol bounds.}
Write $r:=4\pi^2|\xi|^2$ and
\[
M(r):=\frac{r^s\ln r}{(\lambda+r)^s\ln(\lambda+r)}\quad r>0,
\]
so that $m(\xi)=M(4\pi^2|\xi|^2)$ and $f(\xi)=M(4\pi^2|\xi|^2)-1$.

\smallskip
\noindent\textbf{(i) Low frequencies $j\le 0$, then $|\xi|\le 2$.}
For $0<r\le 1$ one has $M(r)=\frac{1}{\lambda^s\ln\lambda}\,r^s\ln r+O\bigl(r^{s+1}|\ln r|\bigr)$.
Differentiating yields: for each integer $k\ge1$,
\[|M^{(k)}(r)|\lesssim r^{s-k}\,(1+|\ln r|),\qquad 0<r\le1.\]
A standard chain-rule estimate for compositions with $|\xi|^2$ implies that for any multi-index $\alpha$ with $|\alpha|\ge 1,$
there exists $C_\alpha>0$ such that for $0<|\xi|\le 2$,
\[|\partial^\alpha m(\xi)|\le C_\alpha\,|\xi|^{2s-|\alpha|}\,(1+|\ln|\xi||).\]
Note that $m(0)=0$, so this bound is valid for $\alpha=0$ as well.
Defining $m_j(\xi):=\varphi_j(\xi)m(\xi)$, and using the product rule with $\|\partial^\beta \varphi_j\|_\infty\lesssim 2^{-j|\beta|}$,
for every $|\alpha|\le N$ and all $j\le 0$ we obtain
\begin{equation}\label{eq:low_derivative_mj}
\|\partial^\alpha m_j\|_\infty
\lesssim 2^{j(2s-|\alpha|)}(1+|j|).
\end{equation}

\smallskip
\noindent\textbf{(ii) High frequencies $j\ge 1$, then $|\xi|\ge 1$.}
As $r\to\infty$, $M(r)=1+O(r^{-1})$, and differentiating gives for each $k\ge1$,
$|M^{(k)}(r)|\lesssim r^{-k-1}$ for $r\ge1$.
Arguing again by the chain rule, for every $|\alpha|\le N$ and all $|\xi|\ge1$,
\[|\partial^\alpha f(\xi)|\lesssim |\xi|^{-2-|\alpha|}.\]
Hence, using $f_j(\xi)=\varphi_j(\xi)f(\xi)$, for $j\ge 1$ we obtain
\begin{equation}\label{eq:high_derivative_fj}
\|\partial^\alpha f_j\|_\infty \lesssim 2^{-j(2+|\alpha|)}.
\end{equation}

\medskip
\noindent\textbf{Step 3. Summation in $L^1(\R^n)$.}
Fix an integer $N>n$. Let $\cF^{-1}$ denote the inverse Fourier transform. 

For the high-frequency blocks $j\ge 1$, let $g_j := \cF^{-1}(f_j)$. By Lemma \ref{lem:bernstein_annulus} and \eqref{eq:high_derivative_fj},
\[
\|g_j\|_{L^1(\R^n)}
\lesssim \sum_{|\alpha|\le N} 2^{j|\alpha|}\,\|\partial^\alpha f_j\|_\infty
\lesssim \sum_{|\alpha|\le N} 2^{j|\alpha|}\,2^{-j(2+|\alpha|)}
\lesssim 2^{-2j}.
\]
Thus, the sum $\sum_{j\ge 1} g_j$ converges absolutely in $L^1(\R^n)$.

For the low-frequency part $f(\xi)\chi(\xi)$, since $f = m - 1$, we can write
\[
f(\xi)\chi(\xi) = m(\xi)\chi(\xi) - \chi(\xi) = \sum_{j\le 0} m_j(\xi) - \chi(\xi).
\]
Since $\chi \in C_c^\infty(\R^n)$, its inverse Fourier transform $\mathcal{F}^{-1}({\chi})$ is in the Schwartz class $\mathcal{S}(\R^n)$, which implies $\mathcal{F}^{-1}({\chi}) \in L^1(\R^n)$. 
For the terms $m_j$, let $h_j := \cF^{-1}(m_j)$. Using Lemma \ref{lem:bernstein_annulus} and \eqref{eq:low_derivative_mj} for $j\le 0$, we have
\[
\|h_j\|_{L^1(\R^n)}
\lesssim \sum_{|\alpha|\le N} 2^{j|\alpha|}\,\|\partial^\alpha m_j\|_\infty
\lesssim (1+|j|)\sum_{|\alpha|\le N} 2^{j|\alpha|}\,2^{j(2s-|\alpha|)}
\lesssim (1+|j|)\,2^{2sj}.
\]
Since $s>0$, the series $\sum_{j\le 0}(1+|j|)\,2^{2sj}$ is convergent, implying $\sum_{j\le 0} h_j$ converges absolutely in $L^1(\R^n)$.

Finally, we define
\[
g_{s,\lambda} := \sum_{j\le 0} h_j - \mathcal{F}^{-1}({\chi}) + \sum_{j\ge 1} g_j.
\]
By the absolute convergence of each component, $g_{s,\lambda} \in L^1(\R^n)$. 
By continuity of the Fourier transform $L^1\to L^\infty$,
we may interchange the Fourier transform and the summation, obtaining
\[
\widehat{g_{s,\lambda}}(\xi) = \sum_{j\le 0} m_j(\xi) - \chi(\xi) + \sum_{j\ge 1} f_j(\xi) = m(\xi)\chi(\xi) - \chi(\xi) + f(\xi)(1-\chi(\xi)) = f(\xi) = m(\xi)-1,
\]
which completes the proof.

\textbf{(ii).} Let $c_0:=\frac{2}{es}$, define
\[
  f(x):=g_{s,\lambda}(x)
          +c_0\,K_{s+\ln}^{\lambda}(x),
\]
where $g_{s,\lambda}$ is given in Proposition \ref{prop:bridge-measures} and $K_{s+\ln}^{\lambda}$ is the logarithmic Bessel kernel introduced before.  
Since $g\in L^1(\R^n)$ and $K_{s,\ln}^{\lambda}$ belong to
$L^1(\R^n)$, we have $f\in L^1(\R^n)$ and
its Fourier transform satisfies
\[
  \widehat f(\xi)+1
  =\frac{(4\pi^2|\xi|^2)^s\,\ln (4\pi^2|\xi|^2)+c_0}{(\lambda+4\pi^2|\xi|^2)^s\,\ln(\lambda+4\pi^2|\xi|^2)},
  \qquad \xi\in \R^n.
\]

Next we check that $\widehat f(\xi)+1$ vanishes nowhere on $\R^n$.
For $t>0$ and $s>0$, set $\phi(t):=t^{s}\ln t$. A direct computation gives
\[
  \phi'(t)=t^{s-1}(s\ln t+1).
\]
Hence $\phi'(t)=0$ if and only if $s\ln t+1=0$, i.e.
\[
  t_* = e^{-1/s}.
\]
Moreover, $\phi'(t)<0$ for $t\in(0,t_*)$ and $\phi'(t)>0$ for $t\in(t_*,\infty)$, so
$\phi$ decreases on $(0,t_*)$ and increases on $(t_*,\infty)$. Therefore $t_*$ is the
unique global minimizer of $\phi$ on $(0,\infty)$, and
\[
  \phi(t_*) = (e^{-1/s})^{s}\,\ln(e^{-1/s})
  = -\frac{1}{es}.
\]
Consequently,
\[
  t^{s}\ln t \ge -\frac{1}{es}\qquad \text{for all }t>0,
\]
and therefore
\[
 (4\pi^2|\xi|^2)^s\,\ln (4\pi^2|\xi|^2) + c_0
  \ge -\frac1{es} + \frac{2}{es}
  =\frac1{es}>0
\]
for every $\xi\in \R^n$. Consequently
\[
  \widehat f(\xi)+1
  >0\qquad\text{for all }\xi\in\R^n,
\]
so $\widehat f+1$ has no zeros.

We may now apply the $n$–dimensional version of Wiener's theorem: since $f\in L^1(\R^n)$ and
$\widehat f+1$ never vanishes, there exists a function
$\Phi_{s,\lambda}\in L^1(\R^n)$ such that (see \cite[Theorem F.1]{abatangelo2025gentle})
\begin{equation}\label{eq:Wiener-inverse}
  (\widehat f(\xi)+1)^{-1}
  = \widehat{\Phi_{s,\lambda}}(\xi)+1,
  \qquad \xi\in\R^n.
\end{equation}
Combining this identity with the explicit expression for $\widehat f+1$
we obtain
\[
  \frac{(\lambda+4\pi^2|\xi|^2)^s\,\ln(\lambda+4\pi^2|\xi|^2)}
       {(4\pi^2|\xi|^2)^s\,\ln (4\pi^2|\xi|^2)+c_0}
  = \widehat{\Phi_{s,\lambda}}(\xi)+1,
\]
which gives
\[
(\lambda+4\pi^2|\xi|^2)^s\,\ln(\lambda+4\pi^2|\xi|^2)
  = (4\pi^2|\xi|^2)^s\,\ln (4\pi^2|\xi|^2)\,\bigl(\widehat{\Phi_{s,\lambda}}(\xi)+1\bigr)
    + c_0\,\bigl(\widehat{\Phi_{s,\lambda}}(\xi)+1\bigr).
\]

Finally we set
\[
  w_{s,\lambda}:=\delta_0 + \Phi_{s,\lambda}(x)\,dx,
  \qquad
  \mu_{s,\lambda}:=c_0\,w_{s,\lambda}.
\]
Both $w_{s,\lambda}$ and $\mu_{s,\lambda}$ are finite Borel measures on
$\R^n$, and using $\widehat{\delta_0}\equiv1$ we have
\[
  \widehat{w_{s,\lambda}}(\xi)
  = \widehat{\Phi_{s,\lambda}}(\xi)+1,
  \qquad
  \widehat{\mu_{s,\lambda}}(\xi)
  = c_0\,\widehat{w_{s,\lambda}}(\xi).
\]
Thus the last identity can be rewritten as
\[
(\lambda+4\pi^2|\xi|^2)^s\,\ln(\lambda+4\pi^2|\xi|^2)
  = (4\pi^2|\xi|^2)^s\,\ln (4\pi^2|\xi|^2)\,\widehat{w_{s,\lambda}}(\xi)
    + \widehat{\mu_{s,\lambda}}(\xi),
\]
which is exactly the desired factorisation.
\end{proof}

The dyadic argument used in the proof of Proposition~\ref{prop:bridge-measures} in fact
yields the following more general $L^1$-kernel criterion, which we now establish.

\begin{proof}[\textbf{Proof of Proposition \ref{erjinzhi}.}]
Fix a smooth radial dyadic partition $\{\varphi_j\}_{j\in\Z}\subset C_c^\infty(\R^n)$ supported on annuli
$\{2^{j-1}\le|\xi|\le 2^{j+1}\}$ and satisfying $\sum_{j\in\Z}\varphi_j(\xi)=1$ for $\xi\neq0$.
Set $\chi(\xi):=\sum_{j\le 0}\varphi_j(\xi)$.
Define dyadic pieces
\[
f_j(\xi):=\varphi_j(\xi)f(\xi),\qquad j\in\Z.
\]
Let $g_j:=\cF^{-1}(f_j)$.

\medskip\noindent
\textbf{(i) High frequencies ($j\ge 1$).}
 By \eqref{eq:HF_assump} and the product rule,
for every $|\alpha|\le N$,
\[
\|\partial^\alpha f_j\|_{L^\infty}
\lesssim \sum_{\beta\le \alpha}\|\partial^\beta\varphi_j\|_\infty
\sup_{\xi\in\operatorname{supp}\varphi_j}|\partial^{\alpha-\beta}f(\xi)|
\lesssim \sum_{\beta\le \alpha}2^{-j|\beta|}\,2^{-j(\delta_1+|\alpha-\beta|)}
\lesssim 2^{-j(\delta_1+|\alpha|)}.
\]
By the standard annulus Bernstein estimate (Lemma~\ref{lem:bernstein_annulus}),
\[
\|g_j\|_{L^1(\R^n)}
\lesssim \sum_{|\alpha|\le N}2^{j|\alpha|}\,\|\partial^\alpha f_j\|_\infty
\lesssim \sum_{|\alpha|\le N}2^{j|\alpha|}\,2^{-j(\delta_1+|\alpha|)}
\lesssim 2^{-j\delta_1}.
\]
Hence $\sum_{j\ge 1}\|g_j\|_1<\infty$ since $\delta_1>0$.

\medskip\noindent
\textbf{(ii) Low frequencies ($j\le 0$).}
By \eqref{eq:LF_assump} and the product rule,
for every $|\alpha|\le N$,
\[
\|\partial^\alpha f_j\|_{L^\infty}
\lesssim \sum_{\beta\le \alpha}\|\partial^\beta\varphi_j\|_\infty
\sup_{\xi\in\operatorname{supp}\varphi_j}|\partial^{\alpha-\beta}f(\xi)|
\lesssim \sum_{\beta\le \alpha}2^{-j|\beta|}\,2^{j(\delta_2-|\alpha-\beta|)}(1+|j|)
\lesssim (1+|j|)\,2^{j(\delta_2-|\alpha|)}.
\]
Again by Lemma~\ref{lem:bernstein_annulus},
\[
\|g_j\|_{L^1(\R^n)}
\lesssim \sum_{|\alpha|\le N}2^{j|\alpha|}\,\|\partial^\alpha f_j\|_\infty
\lesssim (1+|j|)\sum_{|\alpha|\le N}2^{j|\alpha|}\,2^{j(\delta_2-|\alpha|)}
\lesssim (1+|j|)\,2^{j\delta_2}.
\]
Thus $\sum_{j\le 0}\|g_j\|_1<\infty$ since $\delta_2>0$.

\medskip
Define
\[
g:=\sum_{j\in\Z}g_j.
\]
The above bounds show that the series converges absolutely in $L^1(\R^n)$, so $g\in L^1(\R^n)$.
By continuity of the Fourier transform $L^1\to L^\infty$, we may interchange Fourier transform and
summation to obtain
\[
\widehat g(\xi)=\sum_{j\in\Z}\widehat{g_j}(\xi)=\sum_{j\in\Z}f_j(\xi)=f(\xi)
\quad\text{for }\xi\neq0,
\]
and the identity extends to $\xi=0$ after modifying $f$ on a null set if needed.
\end{proof}

\begin{remark}
The assumptions in Proposition \ref{erjinzhi} cannot be weakened by requiring
\eqref{eq:HF_assump}--\eqref{eq:LF_assump} only for $|\alpha|>0$.
Indeed, take $f(\xi)\equiv 1$. Then for every multi-index $\alpha$ with $|\alpha|>0$ we have
$\partial^\alpha f\equiv 0$, so the derivative bounds \eqref{eq:HF_assump}--\eqref{eq:LF_assump}
are trivially satisfied for any choice of $\delta_1,\delta_2>0$.
However, there is no $g\in L^1(\R^n)$ such that $\widehat g\equiv 1$.
In fact, the Fourier transform of an $L^1$ function belongs to $C_0(\R^n)$, hence it is continuous
and vanishes at infinity, whereas the constant function $1$ does not vanish at infinity.
Equivalently, the inverse Fourier transform of the constant function $1$ is the Dirac mass
$\delta_0$, which is not in $L^1(\R^n)$.
Therefore, controlling only derivatives does not exclude a nonzero constant component, and the
assumption with $\alpha=0$ is essential.
\end{remark}


\section{Regularity Theory and Space Embedding}

\subsection{Regularity Theory for Global Solutions}

In this subsection we develop an $L^p$ regularity theory for whole-space 
solutions driven by fractional--logarithmic operators. The main results are the
two a priori estimates stated in Theorems~\ref{thm:Lp-log-Bessel}
and~\ref{thm:Lp-log-Riesz}: the first establishes $L^p$ well-posedness for the
inhomogeneous equation $(\lambda I-\Delta)^{s+\ln}u=f$ and identifies the unique
solution as the logarithmic Bessel potential $u=K_{s+\ln}^{\lambda}*f$; the second
treats the homogeneous equation $(-\Delta)^{s+\ln}u=f$ and shows that solutions
belong to the same space $\mathcal L^{p}_{s+\ln,\lambda}$, with
a quantitative estimate obtained via the bridge between logarithmic Bessel and
logarithmic Riesz potentials.
\begin{proof}[\textbf{Proof of Theorem \ref{thm:Lp-log-Bessel}.}]
By definition of $K^{\lambda}_{s+\ln}$ and Proposition~\ref{lem:log-bessel-kernel}
we know that $K^{\lambda}_{s+\ln}\in L^1(\R^n)$ and that the Fourier
multiplier
\[
  m_{s+\ln}^\lambda(\xi)
  :=\frac{1}{(\lambda+4\pi^2|\xi|^2)^s\,\ln(\lambda+4\pi^2|\xi|^2)}.
\]
By Lemma \ref{lem:Fourier-conv-L1-OM},
\[
  \cF\bigl(K^{\lambda}_{s,\ln}*f\bigr)
  = m_{s+\ln}^\lambda(\xi)\,\widehat f(\xi)
  \quad \text{in}\quad\cS'(\R^n).
\]

Since $u$ is a distributional solution of \eqref{eq:lambda-log-eq},  we have
\[
(\lambda+4\pi^2|\xi|^2)^s\,\ln(\lambda+4\pi^2|\xi|^2)\,\widehat u(\xi)
  =\widehat f(\xi)
  \quad\text{in }\cS'(\R^n).
\]
Hence
\[
  \widehat u(\xi)
  = m_{s+\ln}^\lambda(\xi)\,\widehat f(\xi)
  = \cF\bigl(J^{\lambda}_{s,\ln}f\bigr)(\xi)\quad\text{in }\cS'(\R^n).
\]
so $u = J^{\lambda}_{s,\ln}f$ in $\cS'(\R^n)$,
and $u$ is unique.

Moreover,
\[
  \cF^{-1}\bigl((\lambda+4\pi^2|\xi|^2)^s\ln(\lambda+4\pi^2|\xi|^2)\,\widehat u\bigr)
  = \cF^{-1}\widehat f = f\in L^p(\R^n),
\]
so $u\in \mathcal{L}^{p}_{s+\ln,\lambda}(\R^n)$ by definition. Thus,
\[
  \|u\|_{\mathcal{L}^{p}_{s+\ln,\lambda}(\R^n)}
  = \|f\|_{L^p(\R^n)}.
\]
 This completes the proof.
\end{proof}

We now turn to the homogeneous equation \eqref{eq:log-Riesz-eq} and prove the corresponding
$L^p$ regularity estimate.

\begin{proof}[\textbf{Proof of Theorem \ref{thm:Lp-log-Riesz}.}]
By Proposition~\ref{prop:bridge-measures},
there exist finite Borel measures $\mu_{s,\lambda}$ and $w_{s,\lambda}$
on $\R^n$ such that
\[
  (\lambda + 4\pi^2|\xi|^2)^s \ln (\lambda + 4\pi^2|\xi|^2)
      =\widehat{\mu_{s,\lambda}}(\xi)+\widehat{w_{s,\lambda}}(\xi)\,(4\pi^2|\xi|^2)^s\,\ln (4\pi^2|\xi|^2),
      \qquad \xi\in\R^n.
\]
By Definition \ref{def:frac-log-lap} and  \eqref{eq:Fourier-measure-conv},  we obtain
\begin{equation} \label{eq:log-op-factor}
     (\lambda I-\Delta)^{s+\ln}u
  = \mu_{s,\lambda}*u + w_{s,\lambda}* (-\Delta)^{s+\ln}u
  \qquad\text{in }\cS_{\infty}'(\R^n),
\end{equation}
for all $u\in L^p(\mathbb{R}^n)\subset\cS_{\infty}'(\R^n)$.

Now suppose that $u,f\in L^p(\R^n)$ satisfy
\eqref{eq:log-Riesz-eq}. Inserting this into
\eqref{eq:log-op-factor} we find
\[
  (\lambda I-\Delta)^{s+\ln}u
  = \mu_{s,\lambda}*u + w_{s,\lambda}* f
  \qquad\text{in }\cS'(\R^n).
\]
Since $\mu_{s,\lambda}$ and $w_{s,\lambda}$ are finite measures,
convolution with them is bounded on $L^p(\R^n)$ for $1\le p\le\infty$.
Hence the right–hand side belongs to $L^p(\R^n)$ and we obtain
\[
  (\lambda I-\Delta)^{s+\ln}u \in L^p(\R^n),
\]
with
\[
  \|(\lambda I-\Delta)^{s+\ln}u\|_{L^p(\R^n)}
  \le C_1\bigl(\|f\|_{L^p(\R^n)}+\|u\|_{L^p(\R^n)}\bigr)
\]
for some constant $C_1=C_1(n,s,p,\lambda)>0$.

We may now apply Theorem~\ref{thm:Lp-log-Bessel} to the equation
\eqref{eq:lambda-log-eq} with the right–hand side
\[
  g := \mu_{s,\lambda}*u + w_{s,\lambda}* f \in L^p(\R^n).
\]
It follows that $u\in \mathcal{L}^{p}_{s+\ln,\lambda}(\R^n)$ and
\[
  \|u\|_{\mathcal{L}^{p}_{s+\ln,\lambda}(\R^n)}
 =\,\|g\|_{L^p(\R^n)}
  \le C\,\bigl(\|f\|_{L^p(\R^n)}+\|u\|_{L^p(\R^n)}\bigr),
\]
for some $C=C(n,s,p,\lambda)>0$, which is the desired estimate.
\end{proof}


\subsection{Comparison of Logarithmic Bessel Spaces}

In this subsection we compare several logarithmic function space scales. We first
use the dyadic $L^1$-kernel criterion in Proposition~\ref{erjinzhi} to analyze the
dependence of $\mathcal L^{p}_{s+\ln,\lambda}(\mathbb R^n)$ on the parameter $\lambda$
and to relate this logarithmic Bessel scale to the Bessel potential
spaces. We begin with the $\lambda$-equivalence result.

\begin{proof}[\textbf{Proof of Proposition \ref{prop:lambda-equivalence-wiener}.}]
Define
\[
m_\lambda(\xi)
:=
(\lambda+4\pi^2|\xi|^2)^s
\ln(\lambda+4\pi^2|\xi|^2),
\qquad \xi\in\R^n,
\]
and let $A_\lambda=(\lambda I-\Delta)^{s+\ln}$, so that
$\widehat{A_\lambda u}=m_\lambda\,\widehat u$ for $u\in\mathcal S(\R^n)$.

Fix $1<\lambda_1<\lambda_2$ and set
\[
\theta(\xi):=\frac{m_{\lambda_1}(\xi)}{m_{\lambda_2}(\xi)}.
\]
Since $\lambda_j>1$, one has $m_{\lambda_j}(\xi)>0$ for all $\xi$, hence
$\theta$ is smooth on $\R^n$ and $\theta(\xi)>0$. Moreover,
as $|\xi|\to\infty$,
\begin{equation}\label{eq:theta_infty}
\theta(\xi)=1+O(|\xi|^{-2}),
\end{equation}
while as $|\xi|\to0$,
writing $r:=4\pi^2|\xi|^2$ and using a Taylor expansion at $r=0$ yields
\begin{equation}\label{eq:theta_zero}
\theta(\xi)=\theta(0)+O(|\xi|^{2}),
\qquad
\theta(0)=:\,C_0=\frac{\lambda_1^s\ln\lambda_1}{\lambda_2^s\ln\lambda_2}.
\end{equation}

\medskip
\noindent\textbf{Step 1. Reduction to a symbol vanishing at $0$ and $\infty$.}
Let $\chi\in C_c^\infty(\R^n)$ be radial such that $\chi(\xi)=1$ for $|\xi|\le1$ and
$\operatorname{supp}\chi\subset\{|\xi|\le2\}$. Define
\[
f(\xi):=\theta(\xi)-1-(C_0-1)\chi(\xi).
\]
Then $f(\xi)=\theta(\xi)-C_0$ for $|\xi|\le1$, hence by \eqref{eq:theta_zero},
\begin{equation}\label{eq:f_low}
f(\xi)=O(|\xi|^{2})\qquad |\xi|\to0.
\end{equation}
On the other hand, for $|\xi|\ge2$ one has $\chi(\xi)=0$, hence by \eqref{eq:theta_infty},
\begin{equation}\label{eq:f_high}
f(\xi)=\theta(\xi)-1=O(|\xi|^{-2})\qquad |\xi|\to\infty.
\end{equation}

\medskip
\noindent\textbf{Step 2. $f\in \mathcal F(L^1)$.}
We claim that $f\in\mathcal F(L^1(\R^n))$.
Indeed, $f$ is smooth on $\R^n$ and, by differentiating \eqref{eq:f_low}--\eqref{eq:f_high}
(and using that $\theta$ is a smooth function of $|\xi|^2$),
for every multi-index $\alpha$ with $|\alpha|\le N$ (for some fixed $N>n$) one has the bounds
\[
|\partial^\alpha f(\xi)|\lesssim |\xi|^{2-|\alpha|},\quad 0<|\xi|\le1,
\qquad
|\partial^\alpha f(\xi)|\lesssim |\xi|^{-2-|\alpha|},\quad |\xi|\ge1.
\]
Thus $f$ satisfies the hypotheses of Proposition ~\ref{erjinzhi} with
$\delta_1=\delta_2=2$, and we obtain the existence of $K\in L^1(\R^n)$ such that
\begin{equation}\label{eq:f_is_Fourier_L1}
\widehat K(\xi)=f(\xi)
=\theta(\xi)-1-(C_0-1)\chi(\xi).
\end{equation}

Since $\mathcal{F}^{-1}(\chi)\in\mathcal S(\R^n)\subset L^1(\R^n)$, define
\[
H:=K+(C_0-1)\mathcal{F}^{-1}(\chi)\in L^1(\R^n).
\]
Taking inverse Fourier transforms in \eqref{eq:f_is_Fourier_L1} gives
\[
\mathcal F^{-1}\theta=\delta_0+H,
\qquad H\in L^1(\R^n).
\]
Consequently, the Fourier multiplier
\[
Tf:=\mathcal F^{-1}(\theta\,\widehat f)
\]
can be written as
\[Tf=(\delta_0+H)*f=f+H*f.\]
By Young's inequality,
\begin{equation}\label{eq:T_bound}
\|Tf\|_{L^p}\le (1+\|H\|_{L^1})\|f\|_{L^p},
\qquad 1\le p\le\infty.
\end{equation}

\medskip
\noindent\textbf{Step 3. Invertibility of $T$ on $L^p$.}
Since $\theta(\xi)>0$ and $\theta(\xi)\to1$ as $|\xi|\to\infty$, it follows that
$\inf_{\R^n}\theta>0$, hence $\theta^{-1}$ is smooth and bounded.
Repeating Steps~1--2 with $\theta^{-1}$ in place of $\theta$, we find
$H_1\in L^1(\R^n)$ such that
\[
\mathcal F^{-1}(\theta^{-1})=\delta_0+H_1.
\]
Equivalently, the inverse multiplier $T^{-1}$ satisfies
\[
T^{-1}f=(\delta_0+H_1)*f=f+H_1*f,
\]
and therefore
\begin{equation}\label{eq:Tinv_bound}
\|T^{-1}f\|_{L^p}\le (1+\|H_1\|_{L^1})\|f\|_{L^p},
\qquad 1\le p\le\infty.
\end{equation}
In particular, $T$ is a bounded isomorphism on $L^p(\R^n)$ for every $1\le p\le\infty$.

\medskip
\noindent\textbf{Step 4. Equivalence of norms.}
For $u\in\mathcal S(\R^n)$ we have
\[
\widehat{A_{\lambda_1}u}
=m_{\lambda_1}\widehat u
=\theta\,m_{\lambda_2}\widehat u
\quad\Longrightarrow\quad
A_{\lambda_1}u=T(A_{\lambda_2}u).
\]
Hence, by \eqref{eq:T_bound},
\[
\|u\|_{\mathcal L^p_{s+\ln,\lambda_1}}
=\|A_{\lambda_1}u\|_{L^p}
=\|T(A_{\lambda_2}u)\|_{L^p}
\le (1+\|H\|_{L^1})\,\|A_{\lambda_2}u\|_{L^p}
=(1+\|H\|_{L^1})\,\|u\|_{\mathcal L^p_{s+\ln,\lambda_2}}.
\]
The reverse inequality follows by symmetry, or by applying \eqref{eq:Tinv_bound} to
$A_{\lambda_2}u=T^{-1}(A_{\lambda_1}u)$:
\[
\|u\|_{\mathcal L^p_{s+\ln,\lambda_2}}
=\|A_{\lambda_2}u\|_{L^p}
\le (1+\|H_1\|_{L^1})\,\|A_{\lambda_1}u\|_{L^p}
=(1+\|H_1\|_{L^1})\,\|u\|_{\mathcal L^p_{s+\ln,\lambda_1}}.
\]
Taking $C:=\max\{1+\|H\|_{L^1},\,1+\|H_1\|_{L^1}\}$ completes the proof.
\end{proof}

We next show that, although different parameters $\lambda>1$ yield the same space up to
equivalent norms, the logarithmic scale is strictly smaller than the borderline case
$\lambda=1$.

\begin{proof}[\textbf{Proof of Proposition \ref{prop:lambda-inclusion}.}]
Set
\[
m_\lambda(\xi)
:=
(\lambda+4\pi^2|\xi|^2)^s\,
\ln(\lambda+4\pi^2|\xi|^2),
\qquad
m_1(\xi)
:=
(1+4\pi^2|\xi|^2)^s\,
\ln(1+4\pi^2|\xi|^2),
\]
and denote $A_\lambda=(\lambda I-\Delta)^{s+\ln}$, $A_1=(I-\Delta)^{s+\ln}$.
Consider the ratio
\[
\psi(\xi):=\frac{m_1(\xi)}{m_\lambda(\xi)}.
\]

Note that $\psi(\xi)\to 0$ as $|\xi|\to 0$ and $\psi(\xi)\to 1$ as $|\xi|\to\infty$.
Fix a radial cutoff $\chi\in C_c^\infty(\R^n)$ such that $\chi(\xi)=1$ for $|\xi|\le 1$ and
$\operatorname{supp}\chi\subset\{|\xi|\le 2\}$, and define
\[
f(\xi):=\psi(\xi)-1+\chi(\xi).
\]
Then $f(\xi)=\psi(\xi)$ for $|\xi|\le 1$, hence $f(\xi)=O(|\xi|^2)$ as $|\xi|\to 0$,
while $f(\xi)=\psi(\xi)-1=O(|\xi|^{-2})$ for $|\xi|\ge 2$.
Moreover, by differentiating these expansions (and using that $\psi$ is a smooth function of
$|\xi|^2$), one obtains for some integer $N>n$ the bounds
\[
|\partial^\alpha f(\xi)|\lesssim |\xi|^{2-|\alpha|},
\quad 0<|\xi|\le 1,\qquad
|\partial^\alpha f(\xi)|\lesssim |\xi|^{-2-|\alpha|},
\quad |\xi|\ge 1,\qquad |\alpha|\le N.
\]
Therefore, Proposition ~\ref{erjinzhi} applies (with $\delta_1=\delta_2=2$) and yields
a function $K\in L^1(\R^n)$ such that
\[
\widehat K(\xi)=f(\xi)=\psi(\xi)-1+\chi(\xi).
\]
Since $\mathcal{F}^{-1}(\chi)\in\mathcal S(\R^n)\subset L^1(\R^n)$, we may rewrite
\[
\psi(\xi)=1+\widehat K(\xi)-\chi(\xi)=1+\widehat{K}(\xi)-\chi(\xi).
\]
Setting $H:=K-\mathcal{F}^{-1}(\chi)\in L^1(\R^n)$, we obtain
\[
\mathcal F^{-1}\psi=\delta_0+H.
\]

In particular, the Fourier multiplier
\[
Tf:=\mathcal F^{-1}(\psi\,\widehat f)
\]
can be written as a convolution operator
\[
Tf=(\delta_0+H)*f=f+H*f.
\]
By Young's inequality, $T$ is bounded on $L^p(\R^n)$ for every $1\le p\le\infty$, and
\[
\|Tf\|_{L^p}\le (1+\|H\|_{L^1})\,\|f\|_{L^p}.
\]

Now let $u\in \mathcal{L}^{p}_{s+\ln,\lambda}(\R^n)$, i.e.\ $A_\lambda u\in L^p(\R^n)$.
Since
\[
\widehat{A_1 u}
=m_1\,\widehat u
=\psi\, m_\lambda\,\widehat u
=\psi\,\widehat{A_\lambda u},
\]
we have
\[
A_1 u=T(A_\lambda u)=(\delta_0+H)*(A_\lambda u)\in L^p(\R^n).
\]
Therefore $u\in \mathcal{L}^{p}_{s+\ln,1}(\R^n)$, which proves the inclusion.

To see that the inclusion is strict for $1\le p<\infty$, take the constant function $u\equiv1$.
Then $\widehat u=\delta_0$, so
\[
m_1\,\widehat u=0,
\]
hence
\[
A_1 u\equiv0\in L^p(\R^n).
\]
Thus $u\in \mathcal{L}^{p}_{s+\ln,1}(\R^n)$.

On the other hand,
\[
A_\lambda u
=
m_\lambda(0)\cdot 1
=
\lambda^s\ln\lambda,
\]
which is a nonzero constant.
Since constant functions do not belong to $L^p(\R^n)$ for $1\le p<\infty$,
we conclude that
\[
u\notin \mathcal{L}^{p}_{s+\ln,\lambda}(\R^n).
\]

For the case $p=\infty$, we consider the quadratic polynomial $u(x) = |x|^2$. Clearly, $u$ belongs to the space of tempered distributions $\cS'(\R^n)$, and the Fourier transform of $u$ in the sense of distributions is given by$$\widehat{u}(\xi) = -\frac{1}{4\pi^2} \Delta \delta_0,$$where $\delta_0$ is the Dirac delta distribution at the origin. We first show that $u \in \mathcal{L}^{\infty}_{s+\ln,1}(\R^n)$. By definition, we need to compute $A_1 u = \cF^{-1}(m_1 \widehat{u})$. For any  $\phi \in \cS(\R^n)$, we have
$$\langle m_1 \Delta \delta_0, \phi \rangle 
= \langle \Delta \delta_0, m_1 \phi \rangle 
= \Delta(m_1 \phi)(0).$$
Applying the product rule for the Laplacian, we get
$$\Delta(m_1 \phi)(0) 
= (\Delta m_1)(0)\phi(0) + 2\nabla m_1(0)\cdot\nabla\phi(0) + m_1(0)\Delta\phi(0).$$
Recall that $m_1(\xi) = (1+4\pi^2|\xi|^2)^s \ln(1+4\pi^2|\xi|^2)$. It is immediate that $m_1(0)= 0$. Since $m_1$ is a smooth radial function depending on $|\xi|^2$, its gradient vanishes at the origin, i.e., $\nabla m_1(0) = 0$. Consequently, the expression simplifies to
$$\Delta(m_1 \phi)(0) = (\Delta m_1)(0)\phi(0).$$
In the sense of distributions, this means $m_1 \widehat{u} = -\frac{1}{4\pi^2}(\Delta m_1)(0)\delta_0$. Taking the inverse Fourier transform yields
$$A_1 u(x) = -\frac{1}{4\pi^2}(\Delta m_1)(0),$$
which is a constant. Since constant functions are bounded, $A_1 u \in L^\infty(\R^n)$, and thus $u \in \mathcal{L}^{\infty}_{s+\ln,1}(\R^n)$.

Next, we show that $u \notin \mathcal{L}^{\infty}_{s+\ln,\lambda}(\R^n)$. We repeat the same procedure for the symbol $m_\lambda(\xi) = (\lambda+4\pi^2|\xi|^2)^s \ln(\lambda+4\pi^2|\xi|^2)$. Evaluating at the origin yields
$$m_\lambda(0) = \lambda^s \ln \lambda.$$
Since $\lambda > 1$ and $s > 0$, we have $m_\lambda(0) > 0$. As before, $\nabla m_\lambda(0) = 0$. For a test function $\phi \in \cS(\R^n)$, we now obtain
$$\Delta(m_\lambda \phi)(0) 
= (\Delta m_\lambda)(0)\phi(0) + m_\lambda(0)\Delta\phi(0).$$
This implies that in $\cS'(\R^n)$,$$m_\lambda \widehat{u} 
= -\frac{1}{4\pi^2}(\Delta m_\lambda)(0)\delta_0 - \frac{1}{4\pi^2} m_\lambda(0) \Delta \delta_0.$$
Applying the inverse Fourier transform, we find
$$A_\lambda u(x) = -\frac{1}{4\pi^2}(\Delta m_\lambda)(0) + m_\lambda(0) |x|^2.$$
Because $m_\lambda(0) = \lambda^s \ln \lambda \neq 0$, $A_\lambda u(x)$ exhibits quadratic growth as $|x| \to \infty$. Therefore, $A_\lambda u \notin L^\infty(\R^n)$, which implies $u \notin \mathcal{L}^{\infty}_{s+\ln,\lambda}(\R^n)$. This completes the proof that the inclusion is strict.

This proves that the inclusion is strict for $1\le p\le \infty$.
\end{proof}

We now compare the logarithmic Bessel  space $\mathcal{L}^{p}_{s+\ln,\lambda}(\R^n)$ with the
classical Bessel potential space $\mathcal{L}^{p}_{2s}(\R^n)$.
A first idea is to relate their defining multipliers via the ratio
\[
\theta_1(\xi):=\frac{(1+4\pi^2|\xi|^2)^s}{(\lambda+4\pi^2|\xi|^2)^s\ln(\lambda+4\pi^2|\xi|^2)},
\]
so that $(I-\Delta)^s = \cF^{-1}(\theta_1\,\widehat{A_\lambda(\,\cdot\,)})$ at the formal level.
If one could show $\theta_1\in\mathcal{F}(L^1(\R^n))$, then $\cF^{-1}\theta_1$ would be an $L^1$ kernel
and the embedding $\mathcal{L}^{p}_{s+\ln,\lambda}(\R^n)\hookrightarrow \mathcal{L}^{p}_{2s}(\R^n)$
would follow immediately from Young's inequality.
However, Proposition ~\ref{erjinzhi} cannot be applied directly to $\theta_1$:
indeed, $\theta_1$ decays only logarithmically at infinity,
\[
\theta_1(\xi)\sim \frac{1}{\ln|\xi|^2}\qquad |\xi|\to\infty,
\]
and the dyadic summation argument underlying Proposition ~\ref{erjinzhi} would lead to a
non-summable harmonic-type series at high frequencies.
To overcome this obstacle, we exploit a structural representation of
\(
(\ln(\lambda+4\pi^2|\xi|^2))^{-1}
\)
as a superposition of Bessel multipliers, which yields an $L^1$ kernel by positivity and
Tonelli's theorem. This is the key step in the proof below.

\begin{proof}[\textbf{Proof of Proposition \ref{prop:Hsln-vs-Hs}.}]
Set
\[
M(\xi):=(\lambda+4\pi^2|\xi|^2)^s\ln(\lambda+4\pi^2|\xi|^2),
\qquad
Au:=\cF^{-1}(M(\xi)\widehat u).
\]
For large $|\xi|$ one has $M(\xi)\sim (4\pi^2|\xi|^2)^s\ln(4\pi^2|\xi|^2)$.
Using $\ln(1+t)\le C_\varepsilon\,t^{\varepsilon/2}$ for $t\ge1$, we obtain for each $\varepsilon>0$
constants $c_1,c_2>0$ such that
\begin{equation}\label{eq:M_two_sided}
c_1(1+|\xi|^2)^s \le M(\xi)\le c_2(1+|\xi|^2)^{s+\varepsilon/2},
\qquad \xi\in\R^n.
\end{equation}
Since $\lambda>1$, we also have $M(\xi)\to \lambda^s\ln\lambda>0$ as $\xi\to0$.

Define the symbol ratios associated with the embeddings:
\[
\theta_1(\xi):=\frac{(1+4\pi^2|\xi|^2)^s}{M(\xi)},
\qquad
\theta_2(\xi):=\frac{M(\xi)}{(1+4\pi^2|\xi|^2)^{s+\varepsilon/2}}.
\]
Both $\theta_1$ and $\theta_2$ are smooth bounded functions on $\R^n$. To prove the embeddings, it suffices to show that the inverse Fourier transforms $\cF^{-1}(\theta_1)$ and $\cF^{-1}(\theta_2)$ belong to $L^1(\R^n)$, so that the corresponding convolution operators are bounded on $L^p(\R^n)$ by Young's inequality.

\medskip
\noindent\textbf{Step 1. The $L^1$ integrability of $\cF^{-1}(\theta_2)$.}
Notice that as $|\xi|\to\infty$, $\theta_2(\xi) = O(|\xi|^{-\varepsilon}\ln|\xi|)$.
Let $\chi\in C_c^\infty(\R^n)$ be a smooth radial cutoff such that $\chi(\xi)=1$ for $|\xi|\le1$ and $\operatorname{supp}\chi\subset\{|\xi|\le2\}$. Define
\[
f_2(\xi) := \theta_2(\xi) - \theta_2(0)\chi(\xi).
\]
By construction, $f_2(0)=0$ and $f_2(\xi)$ inherits the smoothness of $\theta_2$. Differentiating the asymptotic expansion shows that for any integer $N>n$ and $|\alpha|\le N$,
\[
|\partial^\alpha f_2(\xi)|\lesssim |\xi|^{2-|\alpha|}\bigl(1+|\ln|\xi||\bigr) \quad \text{for } 0<|\xi|\le 2,
\]
and
\[
|\partial^\alpha f_2(\xi)|\lesssim |\xi|^{-\varepsilon-|\alpha|}\,(1+\ln|\xi|) \quad \text{for } |\xi|\ge 1.
\]
This precisely satisfies the conditions of Proposition ~\ref{erjinzhi} (with $\delta_1 = \varepsilon/2 > 0$). Therefore, $\cF^{-1}(f_2) \in L^1(\R^n)$. Since $\mathcal{F}^{-1}(\chi)\in \mathcal{S}(\R^n) \subset L^1(\R^n)$, it follows that $\cF^{-1}(\theta_2) = \cF^{-1}(f_2) + \theta_2(0)\mathcal{F}^{-1}(\chi)\in L^1(\R^n)$.

\medskip
\noindent\textbf{Step 2. The $L^1$ integrability of $\cF^{-1}(\theta_1)$.}
The symbol $\theta_1$ only decays logarithmically, $\theta_1(\xi) = O((\ln|\xi|)^{-1})$ as $|\xi|\to\infty$. We split $\theta_1$ into a main part and an error term:
\[
\theta_1(\xi) = \frac{1}{\ln(\lambda+4\pi^2|\xi|^2)} + \frac{(1+4\pi^2|\xi|^2)^s - (\lambda+4\pi^2|\xi|^2)^s}{(\lambda+4\pi^2|\xi|^2)^s \ln(\lambda+4\pi^2|\xi|^2)}=: m_{\text{main}}(\xi)+m_{\text{err}}(\xi).
\]

For the main term, we use the Laplace transform identity $z^{-1} = \int_0^\infty e^{-pz} \,dp$ for $z>0$ to write
\[
m_{\text{main}}(\xi) = \int_0^\infty e^{-p \ln(\lambda+4\pi^2|\xi|^2)} \, dp = \int_0^\infty (\lambda+4\pi^2|\xi|^2)^{-p} \, dp=  \int_0^\infty \widehat{G^\lambda_{p}}(\xi) \, dp,
\]
where $ \widehat{G^\lambda_{p}}(\xi)$ is defined in $\eqref{eq:def-shifted-bessel}$. In particular, $G^\lambda_{p}$ is a positive, radial,
integrable function with
\[
\|G^\lambda_{p}\|_{L^1(\R^n)}  = \lambda^{-p}.
\]
Since $\lambda>1$, we can integrate these $L^1$ norms over $p \in (0,\infty)$:
\[
\int_0^\infty \|G^\lambda_{p}\|_{L^1(\R^n)} \, dp = \int_0^\infty \lambda^{-p} \, dp = \frac{1}{\ln\lambda} < \infty.
\]
By Bochner's theory for Lebesgue integration, the finiteness of this integral guarantees that the superposition
\[
K_{\text{main}}(x) := \int_0^\infty G^\lambda_{p}(x) \, dp
\]
converges absolutely in $L^1(\R^n)$, which rigorously implies $\cF^{-1}(m_{\text{main}}) = K_{\text{main}} \in L^1(\R^n)$.

For the error term $m_{\text{err}}(\xi)$, setting $r = 4\pi^2|\xi|^2$, a Taylor expansion for $r \to \infty$ gives
\[
(1+r)^s - (\lambda+r)^s = s(1-\lambda)r^{s-1} + O(r^{s-2}).
\]
Dividing by $(\lambda+r)^s \ln(\lambda+r) \sim r^s \ln r$, we obtain
\[
m_{\text{err}}(\xi) = O\Big(\frac{r^{s-1}}{r^s\ln r}\Big) = O\Big(\frac{1}{r\ln r}\Big) = O\big(|\xi|^{-2}(\ln|\xi|)^{-1}\big) \quad \text{as } |\xi| \to \infty.
\]

Defining $f_1(\xi) := m_{\text{err}}(\xi) - m_{\text{err}}(0)\chi(\xi)$, the decay implies $|\partial^\alpha f_1(\xi)| \lesssim |\xi|^{-2-|\alpha|}$ for $|\xi|\ge 1$. Moreover, since $m_{\mathrm{err}}$ is a smooth radial function of $r=4\pi^2|\xi|^2$,
we have the Taylor expansion at $\xi=0$
\[
m_{\mathrm{err}}(\xi)=m_{\mathrm{err}}(0)+O(|\xi|^2)\qquad |\xi|\to0,
\]
and hence, for every multi-index $\alpha$ with $|\alpha|\le N$,
\[
|\partial^\alpha f_1(\xi)|
=|\partial^\alpha\big(m_{\mathrm{err}}(\xi)-m_{\mathrm{err}}(0)\chi(\xi)\big)|
\lesssim |\xi|^{2-|\alpha|}\bigl(1+|\ln|\xi||\bigr),
\qquad 0<|\xi|\le 1.
\]
Applying Proposition ~\ref{erjinzhi} (with $\delta_1 = 2$), we conclude that $\cF^{-1}(f_1) \in L^1(\R^n)$, and thus $\cF^{-1}(m_{\text{err}}) \in L^1(\R^n)$. Combining both parts yields $\cF^{-1}(\theta_1) \in L^1(\R^n)$.

\medskip
\noindent\textbf{Step 3. Norm comparison.}
Let $T_1 u := \cF^{-1}(\theta_1 \widehat{u})$ and $T_2 u := \cF^{-1}(\theta_2 \widehat{u})$. By Steps 1 and 2, $T_1$ and $T_2$ are convolution operators with $L^1(\R^n)$ kernels, thus bounded on $L^p(\R^n)$ for all $1\le p\le\infty$. 

For $u\in\mathcal S(\R^n)$ we have
\[
\widehat{(I-\Delta)^s u}=(1+4\pi^2|\xi|^2)^s\widehat u=\theta_1(\xi)\,M(\xi)\widehat u
\quad\Longrightarrow\quad
(I-\Delta)^s u=T_1(Au),
\]
and similarly
\[
\widehat{Au}=M(\xi)\widehat u=\theta_2(\xi)\,(1+4\pi^2|\xi|^2)^{s+\varepsilon/2}\widehat u
\quad\Longrightarrow\quad
Au=T_2\big((I-\Delta)^{s+\varepsilon/2}u\big).
\]
Taking $L^p$ norms gives
\[
\|(I-\Delta)^s u\|_{L^p}\lesssim \|Au\|_{L^p}\lesssim \|(I-\Delta)^{s+\varepsilon/2}u\|_{L^p},
\]
which implies
\[
\|u\|_{\mathcal L^p_{2s}}\le C\,\|u\|_{\mathcal L^p_{s+\ln,\lambda}}
\le C\,\|u\|_{\mathcal L^p_{2s+\varepsilon}}.
\]
Finally, since $(1+4\pi^2|\xi|^2)^{-s}\in\mathcal F(L^1)$, the map
$u\mapsto \cF^{-1}((1+4\pi^2|\xi|^2)^{-s}\widehat u)$ is a convolution with an $L^1$ kernel, yielding
$\|u\|_{L^p}\lesssim \|(I-\Delta)^s u\|_{L^p}$, meaning $\mathcal L^p_{2s}\hookrightarrow L^p$.
This completes the proof.
\end{proof}

We finally identify our logarithmic Bessel scale and a
classical family of logarithmic Bessel potential spaces introduced in the
literature; see \cite{opic2000bessel}.

\begin{proof}[\textbf{Proof of Proposition \ref{prop:space-equivalence-literature}.}]
Let the symbols of the respective operators be
\[
M(\xi) := (\lambda+4\pi^2|\xi|^2)^s\ln(\lambda+4\pi^2|\xi|^2), \qquad N(\xi) := (1+4\pi^2|\xi|^2)^s\bigl(1+\ln(1+4\pi^2|\xi|^2)\bigr).
\]
To prove the norm equivalence, it suffices to show that the multiplier ratios
\[
\theta_1(\xi) := \frac{M(\xi)}{N(\xi)} \quad \text{and} \quad \theta_2(\xi) := \frac{N(\xi)}{M(\xi)}
\]
are of the form $c_j + \widehat{K_j}(\xi)$, where $c_j$ are constants and $K_j \in L^1(\R^n)$. If so, the operators $u \mapsto \cF^{-1}(\theta_j \widehat{u})$ are bounded on $L^p(\R^n)$ for all $1 \le p \le \infty$ by Young's convolution inequality.

We observe that both $\theta_1$ and $\theta_2$ tend to $1$ as $|\xi| \to \infty$. Writing $r = 4\pi^2|\xi|^2$, a careful asymptotic expansion as $r \to \infty$ reveals that the difference between the symbols is dominated by logarithmic decay:
\[
\theta_1(\xi) - 1 \sim O(\frac{1}{\ln r}), \qquad \theta_2(\xi) - 1 \sim O(\frac{1}{\ln r}).
\]
Because the purely logarithmic decay $O((\ln|\xi|)^{-1})$ is too slow to apply Proposition ~\ref{erjinzhi} directly, we systematically decompose each ratio into a completely monotonic slow part and an accelerated fast-decaying error part.

\medskip
\noindent\textbf{Step 1. Analysis of $\theta_2(\xi)$.}
We split $\theta_2(\xi)$ as follows to match the exact fractions over $M(\xi)$:
\[
\theta_2(\xi) = 1 + \frac{1}{\ln(\lambda+4\pi^2|\xi|^2)} + \frac{N(\xi) - M(\xi) - (\lambda+4\pi^2|\xi|^2)^s}{M(\xi)}=:1+ m_{\text{slow},2}(\xi)+m_{\text{err},2}(\xi).
\]
For the slow part $m_{\text{slow},2}(\xi)$, by the proof of Proposition~\ref{prop:Hsln-vs-Hs}, we know that $\mathcal{F}^{-1}( m_{\text{slow},2})\in L^1(\R^n)$.
For the error part $m_{\text{err},2}(\xi)$, we perform a Taylor expansion for large $r = 4\pi^2|\xi|^2$:
\begin{align*}
N(r) - M(r) - (\lambda+r)^s &= (1+r)^s \bigl(1+\ln(1+r)\bigr) - (\lambda+r)^s \ln(\lambda+r) - (\lambda+r)^s \\
&=s(1-\lambda) r^{s-1}\ln r + o(r^{s-1}\ln r).
\end{align*}
Dividing this by $M(r) \sim r^s \ln r$, we obtain for large $|\xi|$:
\[
m_{\text{err},2}(\xi)= O(r^{-1}) = O(|\xi|^{-2}).
\]
To apply Proposition ~\ref{erjinzhi}, we define $f_2(\xi) := m_{\text{err},2}(\xi) - m_{\text{err},2}(0)\chi(\xi)$ with a smooth low-frequency cutoff $\chi(\xi)$ ($\chi=1$ for $|\xi|\le 1$, supported in $|\xi|\le 2$). 
For the low frequencies ($|\xi| \to 0$), since $\lambda>1$, $M(\xi)$ and $N(\xi)$ are strictly positive and smooth near the origin. Thus, $m_{\text{err},2}(\xi)$ is smooth. Because $f_2(0)=0$ by construction, a standard expansion yields $f_2(\xi) = O(|\xi|^2)$, which guarantees $|\partial^\alpha f_2(\xi)| \lesssim |\xi|^{2-|\alpha|}$ for $0 < |\xi| \le 2$. This perfectly satisfies the low-frequency condition \eqref{eq:LF_assump} with $\delta_2 = 2$.
Simultaneously, the high-frequency bound gives $|\partial^\alpha f_2(\xi)| \lesssim |\xi|^{-2-|\alpha|}$ for $|\xi|\ge 1$ (\eqref{eq:HF_assump} with $\delta_1 = 2$). Proposition ~\ref{erjinzhi} thus yields $\cF^{-1}(m_{\text{err},2}) \in L^1(\R^n)$.

\medskip
\noindent\textbf{Step 2. Analysis of $\theta_1(\xi)$.}
Similarly, taking $N(\xi)$ as the common denominator, we decompose $\theta_1(\xi)$:
\[
\theta_1(\xi) =  1 - \frac{1}{1+\ln(1+4\pi^2|\xi|^2)} + \frac{M(\xi) - N(\xi) + (1+4\pi^2|\xi|^2)^s}{N(\xi)}=:1- m_{\text{slow},1}(\xi)+m_{\text{err},1}(\xi).
\]

For the slow part, we have the Laplace representation
\[
\frac{1}{1+\ln(1+r)}
=
\frac{1}{\ln(e(1+r))}
=
\int_0^\infty (e(1+r))^{-p}\,dp
=
\int_0^\infty e^{-p}(1+r)^{-p}\,dp,\quad r=4\pi^2 |\xi|^2.
\]
For each $p>0$, we recall that $G_p=\cF^{-1}((1+r)^{-p})$, the Bessel kernel. It is well known that $G_p>0$
and $G_p\in L^1(\R^n)$ with $\|G_p\|_{L^1}=1$.
Therefore
\[
\int_0^\infty \|e^{-p}G_p\|_{L^1}\,dp=\int_0^\infty e^{-p}\,dp<\infty,
\]
so the Bochner integral $K_{\rm slow}:=\int_0^\infty e^{-p}G_p\,dp$ converges in $L^1(\R^n)$.
By continuity of the Fourier transform $L^1\to C_0$, we may interchange Fourier transform and
integration to obtain
\[
\widehat{K_{\rm slow}}(\xi)=\int_0^\infty e^{-p}(1+r)^{-p}\,dp=\frac{1}{1+\ln(1+r)}.
\]

For the error part $m_{\text{err},1}(\xi)$, the same Taylor expansion gives 
\[
m_{\text{err},1}(\xi) = O(r^{-1}) = O(|\xi|^{-2}).
\]
By the same reasoning at the origin (since $N(0)=1>0$), the shifted function $f_1(\xi) := m_{\text{err},1}(\xi) - m_{\text{err},1}(0)\chi(\xi)$ satisfies the low-frequency condition with $\delta_2 = 2$ and the high-frequency condition with $\delta_1 = 2$. Applying Proposition ~\ref{erjinzhi} guarantees $\cF^{-1}(m_{\text{err},1}) \in L^1(\R^n)$. 

Combining these steps, both operators $\cF^{-1}(\theta_1 \widehat{\cdot})$ and $\cF^{-1}(\theta_2 \widehat{\cdot})$ are convolution operators with $L^1$ kernels (plus an identity operator). Young's inequality then secures the uniform bound on $L^p(\R^n)$, completing the proof.
\end{proof}


\subsection{Critical Embedding and Compactness Theory}

As a direct consequence of Proposition~\ref{prop:space-equivalence-literature}, we can now address the critical Sobolev embedding. It is a well-known that the classical Bessel potential space $\mathcal{L}^{p}_{2s}(\R^n)$ fails to embed into $L^\infty(\R^n)$ in the critical case $s = \frac{n}{2p}$ for $1< p<\infty$. However, the logarithmic correction intrinsic to our space recovers this embedding, and in fact, it yields continuity. Moreover, it naturally extends to higher-order critical cases.

\begin{proof}[\textbf{Proof of Theorem \ref{cor:critical_embeddings}.}]
By Proposition~\ref{prop:space-equivalence-literature}, we have the space identification $\mathcal{L}^{p}_{s+\ln,\lambda}(\R^n) = L^p_{2s,1}(\R^n)$ with equivalent norms. Throughout the proof, the intrinsic logarithmic exponent of our space is $\alpha = 1$, which strictly satisfies $\alpha > \frac{1}{p'}$ since $\frac{1}{p'} = 1 - \frac{1}{p} < 1$ for all $1 < p < \infty$.

(i) For $s = \frac{n}{2p}$, the smoothness index corresponds precisely to the critical Sobolev case $2s = \frac{n}{p}$. According to the embedding theorem by Opic and Trebels \cite[Theorem 1.2(a)]{opic2000bessel}, the continuous embedding $L^p_{n/p,\alpha}(\R^n) \hookrightarrow C_0(\R^n)$ holds provided $\alpha > \frac{1}{p'}$. Since $\alpha = 1$, this condition is unconditionally satisfied, yielding the first embedding.

(ii) For $s = \frac{1}{2}\bigl(\frac{n}{p} + m\bigr)$, the smoothness index is $2s = \frac{n}{p} + m$. By the standard properties of logarithmic Bessel potential spaces \cite[Eq.(1.9)]{opic2000bessel}, $u \in L^p_{n/p+m, 1}(\R^n)$ implies that for any multi-index $\gamma$ with $|\gamma| = m$, its weak derivatives satisfy $\partial^\gamma u \in L^p_{n/p, 1}(\R^n)$. Applying part (i) to these derivatives ensures $\partial^\gamma u \in C_0(\R^n)$, which directly yields $u \in C^m_0(\R^n)$. 

Furthermore, \cite{opic2000bessel} provides the precise modulus of continuity for the derivatives in this critical regime:
\[
\|\partial^\gamma u(\cdot + h) - \partial^\gamma u(\cdot)\|_{C_0} \lesssim \|\partial^\gamma u\|_{L^p_{n/p,1}} |\ln|h||^{\frac{1}{p'} - \alpha} \quad \text{as } |h| \to 0.
\]
Substituting our specific exponent $\alpha = 1$ and using the relation $\frac{1}{p'} - 1 = -\frac{1}{p}$ alongside the following fact
\[
\|\partial^\gamma u\|_{L^p_{n/p,1}} \lesssim \|u\|_{L^p_{n/p+m,1}} \simeq \|u\|_{\mathcal{L}^p_{s+\ln,\lambda}},
\]
we obtain the desired logarithmic modulus of continuity.
\end{proof}

\begin{remark}\label{rmk:blowup-sequence-p1}
The endpoint $p=1$ cannot be included in the critical $C_0$-embedding statement.
In fact, in the critical relation $2s=\frac{n}{p}$, taking $p=1$ forces $2s=n$,
i.e.\ $s=\frac{n}{2}$. By Proposition~\ref{prop:kernel_asymptotics}\textnormal{(ii)},
the logarithmic Bessel kernel $K_{\frac n2+\ln}^{\lambda}$ has a double-logarithmic
blow-up at the origin and, in particular, is unbounded in every neighborhood of $0$.

Let $\{\eta_k\}_{k\ge1}\subset C_c^\infty(\mathbb R^n)$ be a standard approximation of the identity.
That is, $\eta_k\ge0$, $\int_{\mathbb R^n}\eta_k(x)\,dx=1$ for all $k$, and for every $\varepsilon>0$,
\[
\int_{B_\varepsilon}\eta_k(x)\,dx \longrightarrow 1 \qquad \text{as }k\to\infty,
\]
equivalently, $\eta_k\rightharpoonup \delta_0$ in the sense of distributions. Define
\[
u_k:=K_{\frac n2+\ln}^{\lambda}*\eta_k\in L^1(\mathbb{R}^n).
\]
Then $\|\eta_k\|_{L^1}=1$ for all $k$, hence the data are uniformly bounded in $L^1$.
However, since $K_{\frac n2+\ln}^{\lambda}$ is nonnegative and unbounded at $0$,
one can choose $k\to\infty$ so that the mass of $\eta_k$ concentrates in an
arbitrarily small neighborhood of the origin, and consequently
\[
u_k(0)=\int_{\mathbb R^n}K_{\frac n2+\ln}^{\lambda}(y)\,\eta_k(y)\,dy \longrightarrow +\infty.
\]
In particular, $\|u_k\|_{L^\infty(\mathbb R^n)}\ge u_k(0)\to\infty$, which shows that
the  operator $f\mapsto K_{\frac n2+\ln}^{\lambda}*f$ cannot be bounded from
$L^1(\mathbb R^n)$ into $L^\infty(\mathbb R^n)$, and hence no continuous embedding into
$C_0(\mathbb R^n)$ can hold at $p=1$.
\end{remark}

We prove the compact embeddings at the critical line $2s=\frac{n}{p}$, treating first
the local compactness on bounded domains and then the global compactness under radial
symmetry.

\begin{proof}[\textbf{Proof of Theorem \ref{thm:compact_critical}.}]
\textbf{(i).} Assume $s = \frac{n}{2p}$. Let $\{u_k\}_{k=1}^\infty$ be a bounded sequence in $\mathcal{L}^{p}_{s+\ln,\lambda}(\R^n)$, meaning there exists a constant $M > 0$ such that $\|u_k\|_{\mathcal{L}^{p}_{s+\ln,\lambda}} \le M$ for all $k \ge 1$. 

By Theorem \ref{cor:critical_embeddings} (i), the space continuously embeds into $C_0(\R^n)$. Thus, the sequence is uniformly bounded on $\R^n$:
\[
\sup_{k \ge 1} \|u_k\|_{C(\overline{\Omega})} \le \sup_{k \ge 1} \|u_k\|_{L^\infty(\R^n)} \le C \sup_{k \ge 1} \|u_k\|_{\mathcal{L}^{p}_{s+\ln,\lambda}} \le CM.
\]
By Theorem \ref{cor:critical_embeddings}, for any $x, y \in \overline{\Omega}$ with $|x - y|$ sufficiently small,
\[
|u_k(x) - u_k(y)| \le C \|u_k\|_{L^p_{n/p, 1}} |\ln|x - y||^{\frac{1}{p'} - 1} \le C' M |\ln|x - y||^{-\frac{1}{p}}.
\]
Since the right-hand side converges to $0$ as $|x - y| \to 0$ independently of $k$, the sequence $\{u_k\}$ is uniformly equicontinuous on the compact set $\overline{\Omega}$. 

By the Arzelà-Ascoli theorem, there exists a subsequence $\{u_{k_j}\}$ that converges uniformly in $C(\overline{\Omega})$. This proves the compact embedding $\mathcal{L}^{p}_{s+\ln,\lambda}(\R^n) \hookrightarrow\hookrightarrow C(\overline{\Omega})$. 
Furthermore, since $\Omega$ is bounded, the embedding $C(\overline{\Omega}) \hookrightarrow L^q(\Omega)$ is continuous for any $1 \le q \le \infty$. The composition of a compact operator with a continuous operator is compact, yielding $\mathcal{L}^{p}_{s+\ln,\lambda}(\R^n) \hookrightarrow\hookrightarrow L^q(\Omega)$.

\textbf{(ii).}
Let $\{u_k\}_{k=1}^\infty$ be a bounded sequence in $\mathcal{L}^{p}_{s+\ln,\lambda,\mathrm{rad}}(\mathbb{R}^n)$, i.e.
\[
\sup_{k\ge 1}\|u_k\|_{\mathcal{L}^{p}_{s+\ln,\lambda}}\le M.
\]
Since $\mathcal{L}^{p}_{s+\ln,\lambda}(\mathbb{R}^n)\hookrightarrow L^p(\mathbb{R}^n)$ continuously, there exists $C_p>0$ such that
\[
\|u_k\|_{L^p(\mathbb{R}^n)}\le C_p M,\qquad k\ge 1.
\]
By Theorem \ref{cor:critical_embeddings}(i), $\{u_k\}$ is uniformly bounded in $C_0(\mathbb{R}^n)$ and uniformly equicontinuous on $\mathbb{R}^n$, with the logarithmic modulus of continuity: there exist constants $C>0$ and $\rho_0\in(0,1)$ such that for all $k$,
\[
|u_k(x)-u_k(y)|\le C M\,|\ln|x-y||^{-1/p}
\quad\text{whenever }0<|x-y|<\rho_0.
\]

\medskip
\noindent\textbf{Step 1: Local uniform convergence.}
Fix $R>0$. The restrictions $\{u_k|_{\overline{B_R(0)}}\}$ are uniformly bounded and equicontinuous on the compact set $\overline{B_R(0)}$, hence relatively compact in $C(\overline{B_R(0)})$ by the Arzel\`a--Ascoli theorem. Applying a Cantor diagonal argument over $R=1,2,\dots$, we extract a subsequence (still denoted $\{u_k\}$) and a function $u$ such that
\[
u_k\to u\quad\text{uniformly on each }\overline{B_R(0)}.
\]
In particular, $u\in C(\mathbb{R}^n)$. Moreover $u$ is radial: for any rotation $Q\in SO(n)$ and any $x\in\mathbb{R}^n$, since $u_k(Qx)=u_k(x)$ for all $k$ and the convergence is uniform on $\overline{B_{|x|}(0)}$, we obtain
\[
u(Qx)=\lim_{k\to\infty}u_k(Qx)=\lim_{k\to\infty}u_k(x)=u(x).
\]

\medskip
\noindent\textbf{Step 2: Uniform decay at infinity.}
We claim that $\{u_k\}$ decays uniformly at infinity: for every $\eta>0$ there exists $R>0$ such that
\[
|u_k(x)|<\eta\qquad\text{for all }|x|\ge R\text{ and all }k.
\]
Suppose not, then there exist $\varepsilon>0$, a subsequence $\{u_{k_j}\}$, and radii $r_j\to\infty$ such that
\[
|u_{k_j}(r_j)|\ge \varepsilon\qquad\text{for all }j.
\]
Choose $\delta\in(0,\rho_0)$ so that
\[
CM\,|\ln\delta|^{-1/p}\le \varepsilon/2.
\]
Fix $j$ and choose any unit vector $e\in\mathbb{S}^{n-1}$. For $r\in[r_j,r_j+\delta]$ we have
$|(re)-(r_je)|=|r-r_j|\le\delta$, hence by the modulus of continuity and radiality,
\[
|u_{k_j}(r)-u_{k_j}(r_j)|
=
|u_{k_j}(re)-u_{k_j}(r_je)|
\le CM\,|\ln|r-r_j||^{-1/p}
\le CM\,|\ln\delta|^{-1/p}\le \varepsilon/2.
\]
Therefore,
\[
|u_{k_j}(r)|\ge \varepsilon/2\qquad\text{for all }r\in[r_j,r_j+\delta].
\]
Let $A_j:=\{x\in\mathbb{R}^n:\ r_j\le |x|\le r_j+\delta\}$. For $j$ large we may assume $r_j\ge 1$, and then
\begin{align*}
\|u_{k_j}\|_{L^p(\mathbb{R}^n)}^p
&\ge \int_{A_j}|u_{k_j}(x)|^p\,dx
=|\mathbb{S}^{n-1}|\int_{r_j}^{r_j+\delta}|u_{k_j}(r)|^p\,r^{n-1}\,dr \\
&\ge |\mathbb{S}^{n-1}|\left(\frac{\varepsilon}{2}\right)^p\int_{r_j}^{r_j+\delta} r^{n-1}\,dr
\ge |\mathbb{S}^{n-1}|\left(\frac{\varepsilon}{2}\right)^p r_j^{\,n-1}\delta.
\end{align*}
Since $n\ge 2$ and $r_j\to\infty$, the right-hand side tends to $+\infty$, contradicting the uniform bound
$\|u_{k}\|_{L^p(\mathbb{R}^n)}\le C_pM$. This proves the claimed uniform decay.

\medskip
\noindent\textbf{Step 3: Global uniform convergence and compactness in $C_0$.}
Fix $\varepsilon>0$. By Step~2 choose $R>0$ such that
\[
\sup_{k\ge 1}\sup_{|x|\ge R}|u_k(x)|\le \varepsilon/3.
\]
Since $u_k\to u$ uniformly on $\overline{B_R(0)}$, there exists $K$ such that
\[
\sup_{|x|\le R}|u_k(x)-u(x)|\le \varepsilon/3,\qquad k\ge K.
\]
Moreover, for every $|x|\ge R$ we have
\[
|u(x)|=\lim_{m\to\infty}|u_{k_m}(x)|
\le \sup_{m}\sup_{|y|\ge R}|u_{k_m}(y)|
\le \varepsilon/3,
\]
hence $\sup_{|x|\ge R}|u(x)|\le \varepsilon/3$. Therefore, for $k\ge K$,
\[
\sup_{|x|\ge R}|u_k(x)-u(x)|
\le \sup_{|x|\ge R}|u_k(x)|+\sup_{|x|\ge R}|u(x)|
\le \frac{2\varepsilon}{3}.
\]
Combining the estimates on $|x|\le R$ and $|x|\ge R$ yields
\[
\sup_{x\in\mathbb{R}^n}|u_k(x)-u(x)|\le \varepsilon,\qquad k\ge K,
\]
so $u_k\to u$ in $L^\infty(\mathbb{R}^n)$ and $u\in C_0(\mathbb{R}^n)$. This proves the compact
embedding $\mathcal{L}^{p}_{s+\ln,\lambda,\mathrm{rad}}(\mathbb{R}^n)\hookrightarrow\hookrightarrow C_0(\mathbb{R}^n)$.

\medskip
\noindent\textbf{Step 4: Compactness in $L^q$.}
If $p<q<\infty$, the interpolation inequality gives
\[
\|u_k-u\|_{L^q(\mathbb{R}^n)}
\le \|u_k-u\|_{L^\infty(\mathbb{R}^n)}^{\,1-\frac{p}{q}}
\|u_k-u\|_{L^p(\mathbb{R}^n)}^{\,\frac{p}{q}}.
\]
The sequence $\{u_k-u\}$ is bounded in $L^p(\mathbb{R}^n)$ and converges to $0$ in $L^\infty(\mathbb{R}^n)$,
hence the right-hand side tends to $0$, and thus $u_k\to u$ strongly in $L^q(\mathbb{R}^n)$.
The case $q=\infty$ is exactly the convergence in $C_0(\mathbb{R}^n)$ proved above.
\end{proof}

At last, we prove the compact embeddings at the critical line $2s<\frac{n}{p}$.

\begin{proof}[\textbf{Proof of Theorem \ref{thm:compact_subcritical}.}]
\noindent\textbf{(i) Local Compactness.}
By definition, for any $u \in \mathcal{L}^{p}_{s+\ln,\lambda}(\R^n)$, there exists $f \in L^p(\R^n)$ such that $u = K_{s+\ln}^\lambda * f$ with $$\|u\|_{\mathcal{L}^p_{s+\ln,\lambda}} = \|f\|_{L^p(\R^n)}.$$
Let $Tf := (K_{s+\ln}^\lambda * f)\big|_\Omega$. We show that $T: L^p(\R^n) \to L^{p^*}(\Omega)$ is a compact operator.

Notice that our subcritical assumption $s < \frac{n}{2p}$, combined with $p > 1$, strictly implies $2s < \frac{n}{p} < n$. According to the  Proposition~\ref{prop:kernel_asymptotics}, the kernel behaves as 
\[
K_{s+\ln}^{\lambda}(x) \sim  \frac{\Gamma(\frac{n}{2}-s)}{\pi^{n/2} 2^{2s} \Gamma(s)}\frac{|x|^{2s-n}}{-2\ln|x|} \quad \text{as } |x| \to 0.
\]
Thus, there exists a radius $r_0 \in (0, 1/2)$ and a constant $C > 0$ such that for all $0 < |x| < r_0$, we have $$0 \le K_{s+\ln}^{\lambda}(x) \le C |x|^{2s-n} (-\ln|x|)^{-1}.$$

For any small $\epsilon \in (0, r_0)$, we decompose the kernel into a highly singular local part and a bounded tail part:
\[
K_{s+\ln}^\lambda = K_\epsilon + K^\epsilon, \quad \text{where } K_\epsilon = K_{s+\ln}^\lambda \chi_{\{|x| < \epsilon\}} \text{ and } K^\epsilon = K_{s+\ln}^\lambda \chi_{\{|x| \ge \epsilon\}}.
\]
This splits the operator as $T = T_\epsilon + T^\epsilon$, where $T_\epsilon f = (K_\epsilon * f)|_\Omega$ and $T^\epsilon f = (K^\epsilon * f)|_\Omega$.

\medskip
\noindent\textbf{Estimate of $T_\epsilon$:} 
For $|x| < \epsilon < 1/2$, the logarithmic term implies $(-\ln|x|)^{-1} < (-\ln\epsilon)^{-1}$, thus
\[
K_\epsilon(x) \le \frac{C}{-\ln\epsilon} |x|^{2s-n}.
\]
Since $0 < 2s < n$ and the exponents satisfy $\frac{1}{p} - \frac{1}{p^*} = \frac{2s}{n}$, the classical Hardy-Littlewood-Sobolev (HLS) inequality (\cite[Chapter 5, Theorem 1]{stein1970singular}) guarantees that convolution with $|x|^{2s-n}$ is bounded from $L^p(\R^n)$ to $L^{p^*}(\R^n)$ with a universal constant $C_{HLS}$. Thus,
\[
\|T_\epsilon f\|_{L^{p^*}(\Omega)} \le \|K_\epsilon * f\|_{L^{p^*}(\R^n)} \le \frac{C \cdot C_{HLS}}{-\ln\epsilon} \|f\|_{L^p(\R^n)}.
\]
Crucially, this implies the operator norm $\|T_\epsilon\|_{L^p \to L^{p^*}} \le \frac{\tilde{C}}{-\ln\epsilon}$. As $\epsilon \to 0^+$, the norm of $T_\epsilon$ converges to $0$.

\medskip
\noindent\textbf{Compactness of $T^\epsilon$:}
For any fixed $\epsilon > 0$, the kernel $K^\epsilon(x)$ is bounded, continuous, and decays exponentially at infinity. Thus, $K^\epsilon$ and its gradient $\nabla K^\epsilon$ belong to $L^{p'}(\R^n)$. 
By Hölder's inequality, for any bounded sequence $\{f_k\}$ in $L^p(\R^n)$ bounded by $M$, the sequence $u_k^\epsilon = T^\epsilon f_k$ satisfies
\begin{align*}
\|u_k^\epsilon\|_{L^\infty(\Omega)} &\le M \|K^\epsilon\|_{L^{p'}(\R^n)}, \\
\|\nabla u_k^\epsilon\|_{L^\infty(\Omega)} &\le M \|\nabla K^\epsilon\|_{L^{p'}(\R^n)}.
\end{align*}
This implies that $\{u_k^\epsilon\}$ is uniformly bounded and equicontinuous on $\overline{\Omega}$. By the Arzelà-Ascoli theorem, $\{u_k^\epsilon\}$ is relatively compact in $C(\overline{\Omega})$, and since $\Omega$ is bounded, it is relatively compact in $L^{p^*}(\Omega)$. Thus, $T^\epsilon$ is a compact operator. Since $T$ is the uniform limit of the compact operators $T^\epsilon$ as $\|T - T^\epsilon\| \to 0$, $T$ is also compact. This proves the local compact embedding.

\medskip
\noindent\textbf{(ii) Global radial compactness.}
Let $\{u_k\}$ be a bounded sequence in $\mathcal L^{p}_{s+\ln,\lambda,\mathrm{rad}}(\mathbb R^n)$ with $$\sup_{k \ge 1} \|u_k\|_{\mathcal L^{p}_{s+\ln,\lambda}} \le M.$$ 
We aim to show that $\{u_k\}$ admits a strongly convergent subsequence in $L^{p^*}(\mathbb R^n)$.

\textbf{Step 1: Local strong convergence.} 
By part (i), for any fixed radius $R > 0$, the restriction operator from $\mathcal L^{p}_{s+\ln,\lambda}(\mathbb R^n)$ to $L^{p^*}(B_R(0))$ is strictly compact. Thus, utilizing a standard Cantor diagonal extraction argument over a sequence of expanding balls $B_1(0) \subset B_2(0) \subset \cdots$, we can extract a common subsequence, which we still denote by $\{u_k\}$, and find a function $u \in L^{p^*}(\mathbb R^n)$, such that 
\[
u_k \to u \quad \text{strongly in } L^{p^*}(B_R(0)) \text{ for every } R > 0.
\]

\textbf{Step 2: Uniform control of the tails.}
By Lemma~\ref{lem:kernel_Lr}, the kernel $K := K_{s+\ln}^\lambda$ belongs to $L^r(\R^n)$ where $\frac{1}{p^*} = \frac{1}{p} + \frac{1}{r} - 1$. For any fixed small $\varepsilon > 0$, there exists a radius $\delta > 0$ such that the local truncated singular kernel $K_\delta := K \chi_{B_\delta(0)}$ satisfies
\[
\|K_\delta\|_{L^r(\R^n)} \le \frac{\varepsilon}{2M}.
\]
There exists radial functions $\{f_k\}\subset L^p(\mathbb{R}^n)$ such that 
\[
u_k = K_\delta * f_k + K^\delta * f_k =: u_k^\delta + v_k^\delta, \quad \text{where } K^\delta := K \chi_{\R^n \setminus B_\delta(0)}.
\]

By Young's convolution inequality, we have a uniform global bound:
\[
\|u_k^\delta\|_{L^{p^*}(\R^n)} \le \|K_\delta\|_{L^r(\R^n)} \|f_k\|_{L^p(\R^n)} \le \frac{\varepsilon}{2M} M = \frac{\varepsilon}{2}.
\]

By the properties of the heat kernel representation, $K^\delta$ and its weak derivatives decay exponentially at infinity and are globally bounded. In particular, $K^\delta, \nabla K^\delta \in L^1(\R^n)$. 
Consequently, by Young's inequality, $v_k^\delta \in W^{1,p}(\R^n)$ with a uniformly bounded norm:
\[
\|v_k^\delta\|_{W^{1,p}(\R^n)} \le \big( \|K^\delta\|_{L^1} + \|\nabla K^\delta\|_{L^1} \big) \|f_k\|_{L^p} \le C_\delta M.
\]
Crucially, since both $K^\delta$ and $f_k$ are radially symmetric, $v_k^\delta$ is a radial function in the classical Sobolev space $W^{1,p}(\R^n)$. For this highly regularized part, the classical Strauss Radial Lemma (see \cite[Proposition 1.1]{yuan2013radial}) becomes completely legitimate and rigorously applicable. For $n \ge 2$ and $|x| \ge 1$, we have:
\[
|v_k^\delta(x)| \le C \|v_k^\delta\|_{W^{1,p}(\R^n)} |x|^{-\frac{n-1}{p}} \le \tilde{C}_\delta M |x|^{-\frac{n-1}{p}}.
\]
Using this pointwise decay, we estimate the $L^{p^*}$-tail of $v_k^\delta$ outside a large ball $B_R(0)$:
\[
\int_{|x| > R} |v_k^\delta(x)|^{p^*} \,dx \le \left( \sup_{|x| > R} |v_k^\delta(x)| \right)^{p^* - p} \int_{|x| > R} |v_k^\delta(x)|^p \,dx \le \left( \tilde{C}_\delta M R^{-\frac{n-1}{p}} \right)^{p^* - p} \|v_k^\delta\|_{L^p(\R^n)}^p.
\]
Because $p^* > p$ and $n \ge 2$, the exponent $-\frac{n-1}{p}(p^* - p)$ is strictly negative. Thus, the right-hand side vanishes as $R \to \infty$. We can choose $R_0 \ge 1$ large enough such that for all $R \ge R_0$ and all $k$, $$\|v_k^\delta\|_{L^{p^*}(\R^n \setminus B_R(0))} \le \frac{\varepsilon}{2}.$$

\textbf{Combining for the uniform tail bound:}
For $R \ge R_0$ and all $k \ge 1$, we have
\[
\|u_k\|_{L^{p^*}(\R^n \setminus B_R(0))} \le \|u_k^\delta\|_{L^{p^*}(\R^n \setminus B_R(0))} + \|v_k^\delta\|_{L^{p^*}(\R^n \setminus B_R(0))} \le \frac{\varepsilon}{2} + \frac{\varepsilon}{2} = \varepsilon.
\]
\textbf{Step 3: Global strong convergence.}
Given any $\varepsilon > 0$, we can choose $R$ sufficiently large such that the uniform tail estimate yields
\[
\|u_k\|_{L^{p^*}(\mathbb R^n \setminus B_R(0))} \le \frac{\varepsilon}{3} \quad \text{for all } k \ge 1.
\]
By Fatou's lemma or weak lower semicontinuity, the limit function $u$ inherits the same tail bound:
\[
\|u\|_{L^{p^*}(\mathbb R^n \setminus B_R(0))} \le \frac{\varepsilon}{3}.
\]
Now, fixing this $R$, the local strong convergence from Step 1 ensures that there exists an integer $K$ such that for all $k \ge K$,
\[
\|u_k - u\|_{L^{p^*}(B_R(0))} \le \frac{\varepsilon}{3}.
\]
Combining these estimates via the triangle inequality, we conclude that for all $k \ge K$,
\begin{align*}
\|u_k - u\|_{L^{p^*}(\mathbb R^n)} &\le \|u_k - u\|_{L^{p^*}(B_R(0))} + \|u_k\|_{L^{p^*}(\mathbb R^n \setminus B_R(0))} + \|u\|_{L^{p^*}(\mathbb R^n \setminus B_R(0))}\le \varepsilon.
\end{align*}
This proves that $u_k \to u$ strongly in $L^{p^*}(\mathbb R^n)$, thereby establishing the global compact embedding 
\[
\mathcal L^{p}_{s+\ln,\lambda,\mathrm{rad}}(\mathbb R^n) \hookrightarrow\hookrightarrow L^{p^*}(\mathbb R^n),
\]
thus, we complete the proof.
\end{proof}

\section{Acknowledgements}

The author is deeply grateful to Prof.~Petr Gurka and Prof.~Bobo Hua
for their constant encouragement and valuable suggestions throughout the
preparation of this work.



		\printbibliography

@article{brooks1969representations,
  title     = {Representations of Weak and Strong Integrals in Banach Spaces},
  author    = {Brooks, James K.},
  journal   = {Proceedings of the National Academy of Sciences},
  volume    = {63},
  number    = {2},
  pages     = {266--270},
  year      = {1969}
}

@book{stein1970singular,
  title     = {Singular Integrals and Differentiability Properties of Functions},
  author    = {Stein, Elias M.},
  series    = {Princeton Mathematical Series},
  volume    = {30},
  publisher = {Princeton University Press},
  year      = {1970}
}

@book{Triebel1983Theory,
  title     = {Theory of Function Spaces},
  author    = {Triebel, Hans},
  series    = {Monographs in Mathematics},
  volume    = {78},
  publisher = {Birkh{\"a}user Verlag},
  address   = {Basel},
  year      = {1983},
  pages     = {284}
}

@book{talagrand1984pettis,
  title     = {Pettis Integral and Measure Theory},
  author    = {Talagrand, Michel},
  volume    = {307},
  publisher = {American Mathematical Society},
  address   = {Providence, RI},
  year      = {1984}
}

@article{1996sharpness,
  title     = {Sharpness of Embeddings in Logarithmic Bessel-Potential Spaces},
  author    = {Edmunds, David E. and Gurka, Petr and Opic, Bohum{\'\i}r},
  journal   = {Proceedings of the Royal Society of Edinburgh Section A: Mathematics},
  volume    = {126},
  number    = {5},
  pages     = {995--1009},
  year      = {1996},
  publisher = {The Royal Society of Edinburgh}
}

@article{1997embeddings,
  title     = {On Embeddings of Logarithmic Bessel Potential Spaces},
  author    = {Edmunds, David E. and Gurka, Petr and Opic, Bohum{\'\i}r},
  journal   = {Journal of Functional Analysis},
  volume    = {146},
  number    = {1},
  pages     = {116--150},
  year      = {1997},
  publisher = {Elsevier}
}

@article{1998norms,
  title     = {Norms of Embeddings of Logarithmic Bessel Potential Spaces},
  author    = {Edmunds, David E. and Gurka, Petr and Opic, Bohum{\'\i}r},
  journal   = {Proceedings of the American Mathematical Society},
  volume    = {126},
  number    = {8},
  pages     = {2417--2425},
  year      = {1998}
}

@article{opic2000bessel,
  title     = {Bessel Potentials with Logarithmic Components and Sobolev-Type Embeddings},
  author    = {Opic, Bohum{\'\i}r and Trebels, Walter},
  journal   = {Analysis Mathematica},
  volume    = {26},
  number    = {4},
  pages     = {299--319},
  year      = {2000},
  publisher = {Springer}
}

@article{sickel2000radial,
  title     = {Radial Subspaces of Besov and Lizorkin--Triebel Classes: Extended Strauss Lemma and Compactness of Embeddings},
  author    = {Sickel, Winfried and Skrzypczak, Leszek},
  journal   = {Journal of Fourier Analysis and Applications},
  volume    = {6},
  number    = {6},
  pages     = {639--662},
  year      = {2000},
  publisher = {Springer}
}

@article{edmunds2000optimality,
  title     = {Optimality of Embeddings of Logarithmic Bessel Potential Spaces},
  author    = {Edmunds, David E. and Gurka, Petr and Opic, Bohum{\'\i}r},
  journal   = {The Quarterly Journal of Mathematics},
  volume    = {51},
  number    = {2},
  pages     = {185--209},
  year      = {2000},
  publisher = {Oxford University Press}
}

@article{edmunds2005compact,
  title     = {Compact and Continuous Embeddings of Logarithmic Bessel Potential Spaces},
  author    = {Edmunds, David E. and Gurka, Petr and Opic, Bohum{\'\i}r},
  journal   = {Studia Mathematica},
  volume    = {168},
  number    = {3},
  pages     = {229--250},
  year      = {2005}
}

@book{triebel2006theory,
  title     = {Theory of Function Spaces {III}},
  author    = {Triebel, Hans},
  publisher = {Birkh{\"a}user},
  year      = {2006}
}

@article{edmunds2006non,
  title     = {Non-compact and Sharp Embeddings of Logarithmic Bessel Potential Spaces into H{\"o}lder-type Spaces},
  author    = {Edmunds, David E. and Gurka, Petr and Opic, Bohum{\'\i}r},
  journal   = {Zeitschrift f{\"u}r Analysis und ihre Anwendungen},
  volume    = {25},
  number    = {1},
  pages     = {73--80},
  year      = {2006}
}

@article{silvestre2007regularity,
  title     = {Regularity of the Obstacle Problem for a Fractional Power of the Laplace Operator},
  author    = {Silvestre, Luis},
  journal   = {Communications on Pure and Applied Mathematics},
  volume    = {60},
  number    = {1},
  pages     = {67--112},
  year      = {2007},
  publisher = {Wiley}
}

@article{caffarelli2007extension,
  title     = {An Extension Problem Related to the Fractional Laplacian},
  author    = {Caffarelli, Luis and Silvestre, Luis},
  journal   = {Communications in Partial Differential Equations},
  volume    = {32},
  number    = {8},
  pages     = {1245--1260},
  year      = {2007},
  publisher = {Taylor \& Francis}
}

@article{caffarelli2008regularity,
  title     = {Regularity Estimates for the Solution and the Free Boundary of the Obstacle Problem for the Fractional Laplacian},
  author    = {Caffarelli, Luis A. and Salsa, Sandro and Silvestre, Luis},
  journal   = {Inventiones Mathematicae},
  volume    = {171},
  number    = {2},
  pages     = {425--461},
  year      = {2008},
  publisher = {Springer}
}

@book{grafakos2008classical,
  title     = {Classical Fourier Analysis},
  author    = {Grafakos, Loukas},
  publisher = {Springer},
  year      = {2008},
  volume    = {2}
}

@book{grafakos2009modern,
  title     = {Modern Fourier Analysis},
  author    = {Grafakos, Loukas and Ross, Kenneth A.},
  publisher = {Springer},
  year      = {2009},
  volume    = {250}
}

@article{caffarelli2010nonlocal,
  title     = {Nonlocal Minimal Surfaces},
  author    = {Caffarelli, Luis A. and Roquejoffre, Jean-Michel and Savin, Ovidiu},
  journal   = {Communications on Pure and Applied Mathematics},
  volume    = {63},
  number    = {9},
  pages     = {1111--1144},
  year      = {2010}
}

@article{chang2011fractional,
  title     = {Fractional Laplacian in Conformal Geometry},
  author    = {Chang, Sun-Yung Alice and Gonzalez, Maria del Mar},
  journal   = {Advances in Mathematics},
  volume    = {226},
  number    = {2},
  pages     = {1410--1432},
  year      = {2011},
  publisher = {Elsevier}
}

@book{murray2012asymptotic,
  title     = {Asymptotic Analysis},
  author    = {Murray, J. D.},
  publisher = {Springer Science \& Business Media},
  year      = {2012}
}

@article{yuan2013radial,
  title     = {The Radial Lemma of Strauss in the Context of Morrey Spaces},
  author    = {Sickel, Winfried and Yang, Dachun and Yuan, Wen},
  journal   = {Annales Fennici Mathematici},
  volume    = {39},
  pages     = {417--442},
  year      = {2014}
}

@article{RosOtonSerra2014,
  title     = {The Dirichlet Problem for the Fractional Laplacian: Regularity up to the Boundary},
  author    = {Ros-Oton, Xavier and Serra, Joaquim},
  journal   = {Journal de Math{\'e}matiques Pures et Appliqu{\'e}es},
  volume    = {101},
  number    = {3},
  pages     = {275--302},
  year      = {2014}
}

@article{RosOtonSerra2015,
  title     = {Nonexistence Results for Nonlocal Equations with Critical and Supercritical Nonlinearities},
  author    = {Ros-Oton, Xavier and Serra, Joaquim},
  journal   = {Communications in Partial Differential Equations},
  volume    = {40},
  number    = {1},
  pages     = {115--133},
  year      = {2015}
}

@article{cobos2015besov,
  title     = {On Besov Spaces of Logarithmic Smoothness and Lipschitz Spaces},
  author    = {Cobos, Fernando and Dom{\'\i}nguez, {\'O}scar},
  journal   = {Journal of Mathematical Analysis and Applications},
  volume    = {425},
  number    = {1},
  pages     = {71--84},
  year      = {2015},
  publisher = {Elsevier}
}

@article{CaffarelliRosOtonSerra2017,
  title     = {Obstacle Problems for Integro-Differential Operators: Regularity of Solutions and Free Boundaries},
  author    = {Caffarelli, Luis and Ros-Oton, Xavier and Serra, Joaquim},
  journal   = {Inventiones Mathematicae},
  volume    = {208},
  number    = {3},
  pages     = {1155--1211},
  year      = {2017}
}

@article{chen2019dirichlet,
  title     = {The Dirichlet Problem for the Logarithmic Laplacian},
  author    = {Chen, Huyuan and Weth, Tobias},
  journal   = {Communications in Partial Differential Equations},
  volume    = {44},
  number    = {11},
  pages     = {1100--1139},
  year      = {2019},
  publisher = {Taylor \& Francis}
}

@article{jarohs2020poisson,
  title     = {A New Look at the Fractional Poisson Problem via the Logarithmic Laplacian},
  author    = {Jarohs, Sven and Salda{\~n}a, Alberto and Weth, Tobias},
  journal   = {Journal of Functional Analysis},
  volume    = {279},
  number    = {11},
  pages     = {108732},
  year      = {2020},
  publisher = {Elsevier}
}

@article{frank2020classification,
  title={Classification of solutions of an equation related to a conformal log Sobolev inequality},
  author={Frank, Rupert L and K{\"o}nig, Tobias and Tang, Hanli},
  journal={Advances in Mathematics},
  volume={375},
  pages={107395},
  year={2020},
  publisher={Elsevier}
}

@book{dominguez2023function,
  title     = {Function Spaces of Logarithmic Smoothness: Embeddings and Characterizations},
  author    = {Dom{\'\i}nguez, {\'O}scar and Tikhonov, Sergey},
  publisher = {American Mathematical Society},
  volume    = {282},
  number    = {1393},
  year      = {2023}
}

@article{chenveron2024cauchy,
  title     = {The Cauchy Problem Associated to the Logarithmic Laplacian with an Application to the Fundamental Solution},
  author    = {Chen, Huyuan and V{\'e}ron, Laurent},
  journal   = {Journal of Functional Analysis},
  volume    = {287},
  pages     = {110470},
  year      = {2024},
  publisher = {Elsevier}
}

@book{abatangelo2025gentle,
  title     = {A Gentle Invitation to the Fractional World},
  author    = {Abatangelo, Nicola and Dipierro, Serena and Valdinoci, Enrico and others},
  publisher = {Springer},
  year      = {2025}
}

@article{gerontogiannis2025ahlfors,
  title     = {The Logarithmic Dirichlet Laplacian on {A}hlfors Regular Spaces},
  author    = {Gerontogiannis, D. M.},
  journal   = {Transactions of the American Mathematical Society},
  volume    = {378},
  number    = {1},
  pages     = {1--42},
  year      = {2025},
  publisher = {American Mathematical Society}
}

@article{ch2025logarithmic,
  title     = {Logarithmic Laplacian on General Riemannian Manifolds},
  author    = {Chen, Rui},
  journal   = {},
  volume    = {arXiv:2506.19311},
  year      = {2025}
}

@article{chen2025logarithmic,
  title     = {The Logarithmic Laplacian on General Graphs},
  author    = {Chen, Rui and Xu, Wendi},
  journal   = {arXiv preprint},
  volume    = {arXiv:2507.05936},
  year      = {2025}
}

@article{bellido2025compact,
  title     = {Compact Embeddings of Bessel Potential Spaces},
  author    = {Bellido, Jos{\'e} C. and Cueto, Javier and Garc{\'\i}a-S{\'a}ez, Guillermo},
  journal   = {arXiv preprint},
  volume    = {arXiv:2506.01677},
  year      = {2025}
}

@article{chen2026fractional,
  title     = {The Fractional-Logarithmic Laplacian: Fundamental Properties and Eigenvalues},
  author    = {Chen, Huyuan and Chen, Rui and Hauer, Daniel},
  journal   = {arXiv preprint},
  volume    = {arXiv:2602.06581},
  year      = {2026}
}

@article{chen2026poincar,
  title     = {On Poincar{\'e}-Sobolev Level Involving Fractional {GJMS} Operators on Hyperbolic Space},
  author    = {Chen, Huyuan and Chen, Rui},
  journal   = {arXiv preprint},
  volume    = {arXiv:2602.03299},
  year      = {2026}
}

@article{RosOtonWeidner2026,
  title     = {Regularity for Nonlocal Equations with Local Neumann Boundary Conditions},
  author    = {Ros-Oton, Xavier and Weidner, Moritz},
  journal   = {Analysis \& PDE},
  volume    = {19},
  number    = {2},
  pages     = {353--411},
  year      = {2026}
}

		
		\noindent\textit{Rui Chen}: School of Mathematical Sciences, Fudan University,\\[1mm]
		Shanghai 200433,  China\\[1mm]
		Brandenburg University of Technology Cottbus--Senftenberg,\\[1mm]
		Cottbus 03046, Germany\\[1mm]
		\noindent\emph{Email:} \texttt{chenrui23@m.fudan.edu.cn}
	\end{document}